\newtheorem{theorem}{Theorem}[section]
\newtheorem{proposition}[theorem]{Proposition}
\newtheorem{corollary}[theorem]{Corollary}
\theoremstyle{definition}
\newtheorem{example}[theorem]{Example}
\newtheorem{remark}[theorem]{Remark}
\newtheorem{definition}[theorem]{Definition}
\font\black=cmbx10 \font\sblack=cmbx7 \font\ssblack=cmbx5 \font\blackital=cmmib10  \skewchar\blackital='177
\font\sblackital=cmmib7 \skewchar\sblackital='177 \font\ssblackital=cmmib5 \skewchar\ssblackital='177
\font\sanss=cmss11 \font\ssanss=cmss9 scaled 900 \font\sssanss=cmss8 scaled 600 \font\blackboard=msbm10
\font\sblackboard=msbm7 \font\ssblackboard=msbm5 \font\caligr=eusm10 \font\scaligr=eusm7 \font\sscaligr=eusm5
\font\bsymb=cmsy10 scaled\magstep2
\def\all#1{\setbox0=\hbox{\lower1.5pt\hbox{\bsymb
       \char"38}}\setbox1=\hbox{$_{#1}$} \box0\lower2pt\box1\;}
\def\exi#1{\setbox0=\hbox{\lower1.5pt\hbox{\bsymb \char"39}}
       \setbox1=\hbox{$_{#1}$} \box0\lower2pt\box1\;}
\def\tx#1{{\fam0\relax#1}}
\def\sss#1{{\fam\ssfam\relax#1}}
\def\pmb#1{\setbox0\hbox{${#1}$} \copy0 \kern-\wd0 \kern.2pt \box0}
\def\pmbb#1{\setbox0\hbox{${#1}$} \copy0 \kern-\wd0
      \kern.2pt \copy0 \kern-\wd0 \kern.2pt \box0}
\def\pmbbb#1{\setbox0\hbox{${#1}$} \copy0 \kern-\wd0
      \kern.2pt \copy0 \kern-\wd0 \kern.2pt
    \copy0 \kern-\wd0 \kern.2pt \box0}
\def\pmxb#1{\setbox0\hbox{${#1}$} \copy0 \kern-\wd0
      \kern.2pt \copy0 \kern-\wd0 \kern.2pt
      \copy0 \kern-\wd0 \kern.2pt \copy0 \kern-\wd0 \kern.2pt \box0}
\def\pmxbb#1{\setbox0\hbox{${#1}$} \copy0 \kern-\wd0 \kern.2pt
      \copy0 \kern-\wd0 \kern.2pt
      \copy0 \kern-\wd0 \kern.2pt \copy0 \kern-\wd0 \kern.2pt
      \copy0 \kern-\wd0 \kern.2pt \box0}
\mathchardef\za="710B  %\alpha
\mathchardef\zb="710C  %\beta
\mathchardef\zg="710D  %\gamma
\mathchardef\zd="710E  %\delta
\mathchardef\zve="710F %\epsilon
\mathchardef\zz="7110  %\zeta
\mathchardef\zh="7111  %\eta
\mathchardef\zvy="7112 %\theta
\mathchardef\zi="7113  %\iota
\mathchardef\zk="7114  %\kappa
\mathchardef\zl="7115  %\lambda
\mathchardef\zm="7116  %\mu
\mathchardef\zn="7117  %\nu
\mathchardef\zx="7118  %\xi
\mathchardef\zp="7119  %\pi
\mathchardef\zr="711A  %\rho
\mathchardef\zs="711B  %\sigma
\mathchardef\zt="711C  %\tau
\mathchardef\zu="711D  %\upsilon
\mathchardef\zvf="711E %\phi
\mathchardef\zq="711F  %\chi
\mathchardef\zc="7120  %\psi
\mathchardef\zw="7121  %\omega
\mathchardef\ze="7122  %\varepsilon
\mathchardef\zy="7123  %\vartheta
\mathchardef\zf="7124  %\varomega
\mathchardef\zvr="7125 %\varrho
\mathchardef\zvs="7126 %\varsigma
\mathchardef\zf="7127  %\varphi
\mathchardef\zG="7000  %\Gamma
\mathchardef\zD="7001  %\Delta
\mathchardef\zY="7002  %\Theta
\mathchardef\zL="7003  %\Lambda
\mathchardef\zX="7004  %\Xi
\mathchardef\zP="7005  %\Pi
\mathchardef\zS="7006  %\Sigma
\mathchardef\zU="7007  %\Upsilon
\mathchardef\zF="7008  %\Phi
\mathchardef\zW="700A  %\Omega
\newcommand{\be}{\begin{equation}}
\newcommand{\ee}{\end{equation}}
\newcommand{\bea}{\begin{eqnarray}}
\newcommand{\eea}{\end{eqnarray}}
\newcommand{\beas}{\begin{eqnarray*}}
\newcommand{\eeas}{\end{eqnarray*}}
\def\*{{\textstyle *}}
\newcommand{\pa}{\partial}
\newcommand{\ti}{\times}
\newcommand{\ad}{{\rm ad}}
\newcommand{\Ll}{{\pounds}}
\def\la{\langle}
\def\ran{\rangle}
\def\cR{{\mathcal R}}
\def\cO{{\mathcal O}}
\def\cF{\mathcal{F}}
\def\wt{\widetilde}
\def\sJ{\mathsf{J}}
\def\sT{\mathsf{T}}
\def\sj{{\sss j}}
\def\xd{\tx{d}}
\newcommand{\g}{\mathfrak{g}}
\newcommand{\La}{\big\langle}
\newcommand{\Ra}{\big\rangle}
\newcommand{\N}{\mathbb{N}}
\newcommand{\Z}{\mathbb{Z}}
\newcommand{\R}{\mathbb{R}}
\newcommand{\Pe}{\mathbb{P}}
\newcommand{\n}{\nabla}
\newcommand{\op}[1]{\!\!\mathop{\rm ~#1}\nolimits}
\newcommand{\id}{\op{id}}
\newcommand{\Ad}{\textnormal{Ad}}
\def\g{\mathfrak{g}}
\def\Rt{{\R^\ti}}
\def\Lt{{L^\ti}}
\DeclareMathOperator{\GL}{GL}
\newcommand{\we}{\wedge}
\newcommand{\hzx}{{\hat\xi}}
\newcommand{\wn}{{\widetilde{N}}}
\newcommand{\mn}{{\medskip\noindent}}
\newcommand{\no}{{\noindent}}
\begin{document}
\title{\bf Reductions: precontact versus presymplectic}
\date{}
\author{\\ Katarzyna  Grabowska$^1$\\ Janusz Grabowski$^2$ %(Corresponding author)
        \\ \\
         $^1$ {\it Faculty of Physics}\\
                {\it University of Warsaw}\\
                \\$^2$ {\it Institute of Mathematics}\\
                {\it Polish Academy of Sciences}
                }
%\author{Janusz Grabowski}
\maketitle
\begin{abstract}
\noindent We show that contact reductions can be described in terms of symplectic reductions in the traditional Marsden-Weinstein-Meyer as well as the constant rank picture. The point is that we view contact structures as particular (homogeneous) symplectic structures. A group action by contactomorphisms is lifted to a Hamiltonian action on the corresponding symplectic manifold, called the symplectic cover of the contact manifold. In contrast to the majority of the literature in the subject, our approach includes general contact structures (not only co-oriented) and changes the traditional view point: contact Hamiltonians and contact moment maps for contactomorphism groups are no longer defined on the contact manifold itself, but on its symplectic cover. Actually, the developed framework for reductions is slightly more general than purely contact, and includes a precontact and presymplectic setting which is based on the observation that there is a one-to-one correspondence between isomorphism classes of precontact manifolds and certain homogeneous presymplectic manifolds.

\medskip\noindent
{\bf Keywords:} contact structures; symplectic structures; principal bundles; Hamiltonian group action; symplectic reduction; contactomorphisms.
\par

\smallskip\noindent
{\bf MSC 2020:} 53D20; 53D10; 53D35; 70H33; 70G45; 70S05.
%\textit{Primary} ; \textit{Secondary} .	
\end{abstract}

%%%%%%%%%%%%%%%%%%%%%%%%%%%%%%%%%%%%%%%%%%%%%%%%%%%%%%%%%%%%%%%%

\section{Introduction}
Reductions of systems due to their symmetries are fundamental tools in almost all areas of physics, having strong links to important questions in the traditional classical mechanics of particles,
rigid bodies, fields, fluids, plasmas, and elastic solids, as well as quantum and relativistic
theories. A model appearance of reductions is that for Hamiltonian systems with the celebrated  Marsden-Weinstein-Meyer theorem, defined for Hamiltonian group actions and with the use of momentum maps (see the historical survey  \cite{Marsden:2001}).

\medskip Hamiltonian systems have been intensively studied recently in the context of contact structures, replacing the symplectic ones in the traditional setting. The roots of contact geometry go back to 1872, when Sophus Lie introduced the concept of contact transformations for studying systems of differential equations, but then contact structures became objects of  intensive studies in a purely geometric context, as well as a tool for numerous applications, like in Gibbs' work on thermodynamics, Huygens' work on geometric optics, Hamiltonian dynamics, fluid mechanics, etc. As a nice  source for the  history of contact geometry and topology with an extended list of references, we recommend \cite{Geiges:2001}, and for a comprehensive presentation of contact geometry, we refer to classical monographs \cite{Arnold:1989,Geiges:2008,Libermann:1987}.

\medskip The standard Hamiltonian formulation describes exclusively isolated systems with reversible dynamics, while real systems are constantly in interaction with the environment, which introduces the phenomena of dissipation and irreversibility. Some of these  phenomena can be described mathematically by means of contact geometry methods. There is a huge list of  publications on contact Hamiltonian geometry, let us mention only a few recent ones describing mechanical dissipative systems, both in the Hamiltonian and Lagrangian setting \cite{Bravetti:2017a,Cruz:2018,deLeon:2019, deLeon:2020,deLeon:2022,Gaset:2020}, but also quantum systems \cite{Ciaglia:2018}.

\medskip Unfortunately, almost all this literature deals with trivial (co-oriented) contact structures, where a global contact form $\zh$ on a manifold $M$ is given.
This is fine for local calculations, but it hides the essence of the corresponding intrinsic geometry.
The problem is that, even in co-orientable cases, dealing with contact Hamiltonian vector fields and the Reeb vector field associated with a particular contact form $\zh$ depends strongly on the choice of $\zh$ and does not make any sense for more general contact structures. The point is that, changing a local contact form defining the contact structure into another  one in the same class of conformal equivalence, we  have to also change the Reeb vector field (the new one is not even in the same conformal class) and the original Hamiltonian defined on the contact manifold. Actually, it is a student exercise to prove  that if, for a contact distribution $C\subset\sT M$,
a tangent vector $v_y\in\sT_yM$ is not in $C_y$, then there is a local contact form $\zh$ inducing $C$, i.e., $C=\ker(\zh)$,  such that $v_y=\cR_\zh(y)$, where $\cR_\zh$ is the  Reeb vector field for $\zh$.

\medskip In \cite{Grabowska:2022} we developed an essentially different picture for contact Hamiltonian mechanics and contact Hamilton-Jacobi theory, which is geometrically intrinsic and valid for general contact structures. The main  change of the view point is that contact structures on $M$ are represented by 1-homogeneous symplectic forms $\zw$ on $\Rt$-principal bundles $\zt:P\to M$, where $\Rt=\R\setminus\{0\}=\GL(1,\R)$ is the multiplicative group of nonzero reals. We call these structures \emph{symplectic $\Rt$-bundles} and they are for us genuine contact structures (see \cite{Bruce:2017,Grabowska:2022,Grabowski:2013}). In  this sense, contact geometry is not an `odd-dimensional version' of symplectic geometry but rather a particular, namely homogeneous, symplectic geometry. To make it precise, let us mention that on every $\Rt$-principal bundle $\zt:P\to M$ with respect to an action of the Lie group $\Rt$,
$$h:\Rt\ti P\to P,\quad h(s,x)=h_s(x),$$
a concept of homogeneity of a tensor field is well defined. Namely, a vector field $X$ (resp., a differential form $\za$) on $P$ is \emph{homogeneous of degree $k\in\Z$} if $(h_s)_*(X)=s^{-k}\cdot X$ (resp., $(h_s)^*(\za)=s^k\cdot\za$). Putting $\zF=h_s$ in the general differential geometric formula
\be\label{gf} \zF^*(i_{\zF_*(X)}\za)=i_X\zF^*(\za),\ee
valid for any diffeomorphism $\zF:N_1\to N_2$, any differential form $\za$ on $N_2$, and any vector field $X$ on $N_1$, we immediately get that $i_X\za$ is $(l+k)$-homogeneous if only $X$ is $k$-homogeneous and $\za$ is $l$-homogeneous. It is also easy to see that then $\xd\za$ is also $l$-homogeneous. Any symplectic $\Rt$-bundle associated with a given contact structure we call its \emph{symplectic cover}. All these covers are canonically isomorphic, but a choice of a particular presentation of a symplectic cover may depend on our purposes. All this can be generalized to \emph{precontact structures} and their \emph{presymplectic covers} being presymplectic $\Rt$-bundles. The corresponding theory is developed in this paper.

\medskip Note that any $\Rt$-principal bundle $P\to M$ can be viewed as an open submanifold $L^\ti_P$ of nonzero vectors in the suitable line bundle $L_P$ with the action of $\Rt$ being the multiplication by (nonzero) reals in $L_P$. The line bundle $L_P$ can be viewed as the line bundle associated with the principal bundle $P$ and the standard action of $\Rt$ on $\R$. In the case of a symplectic $\Rt$-principal bundle corresponding to a contact manifold $(M,C)$, the line bundle $L_P$ is dual to $L^C=\sT M\slash C$.
Contact Hamiltonians in this framework are not functions on $M$ but 1-homogeneous functions on $P$, $H:P\to\R$ (alternatively, sections of the line bundle $L^C$). Since the principal bundle may be  non-trivial, there is generally no way to associate with them functions (Hamiltonians) on $M$.
Canonical examples are contact structures on the bundles $M=\sJ^1L$ of first jets of sections of a line bundle $L\to Q$ which are nontrivial if only $L$ is nontrivial. In this case, $P=\sT^*(L^*)^\ti$ with its canonical symplectic form and the so called  \emph{phase lift} (see \cite{Bruce:2016,Bruce:2017,Grabowska:2022,Grabowski:2009,Grabowski:2013}) of the $\Rt$-principal bundle structure on $(L^*)^\ti=L^*\setminus\{0_M\}$ consisting of non-zero vectors in $L$ (cf. \cite{Grabowska:2022,Grabowski:2013}). Note that this phase lift is not the same as the standard cotangent lift of a group action. In \cite{Grabowski:2013}, it was shown that these canonical examples are the only \emph{linear contact structures}. This result is a contact analog of the well-known fact that every linear symplectic structure on a vector bundle $E\to M$ is equivalent to the canonical symplectic structure on $E\simeq\sT^*M$.

\medskip
Since the symplectic form $\zw$ on a symplectic $\Rt$-bundle $P$ is 1-homogeneous with respect to the $\Rt$-action, 1-homogeneous Hamiltonians are closed with respect to the symplectic Poisson (Lagrange) bracket $\{ H_1,H_2\}_\zw$. This defines also a \emph{Jacobi bracket} of sections of $L^*_P$ which makes $L^*_P$ into a \emph{local Lie algebra} in the sense of Kirillov \cite{Guedira:1984,Kirillov:1976,Marle:1991} or, equivalently, into a \emph{Jacobi bundle} in the sense of Marle \cite{Marle:1991} or a \emph{Kirillov manifold} in the sense of \cite{Bruce:2017,Grabowski:2013}.
For trivial principal (or line) bundles we get a Jacobi structure on $M$, i.e., a Jacobi bracket on the $C^\infty(M)$-module $C^\infty(M)$ in the sense of Lichnerowicz \cite{Dazord:1991,Lichnerowicz:1978}. A standard misunderstanding present in the literature is that the Jacobi bracket on $C^\infty(M)$ is viewed as a bracket on the associative  algebra $C^\infty(M)$, and not on $C^\infty(M)$ as a $C^\infty(M)$-module. The corresponding Jacobi structure on $M$ is therefore understood as a pair $(\zL,\zG)$, where $\zL$ is a bivector field and $\zG$ is a vector field on $M$ (satisfying some additional conditions), that comes from taking the constant function 1 as the basic section for this module, while choosing another basic section leads to other tensors $(\zL,\zG)$ for the same Jacobi bracket.

\medskip\noindent
Note that our symplectic $\Rt$-bundles, being particular instances of \emph{symplectic Liouville manifolds} in the terminology of \cite{Libermann:1987}, can be understood as more advanced symplectizations of contact structures, which (at least for contact forms) are very well known  in contact geometry. The use of the multiplicative group $\Rt$ (which is non-connected) instead of the additive group $\R$ (or the multiplicative group $\R^+$ of positive reals), which appears in traditional symplectizations of contact forms, is crucial for including non-trivial contact structures into the picture. Traditional symplectizations of contact forms produce symplectic $\R^+$-principal bundles (which are always trivializable), which are sometimes called \emph{symplectic cones}. The same remains valid for Jacobi brackets and the corresponding \emph{poissonizations}.

\medskip
Contact reductions, as contact analogs of various symplectic reductions, and related questions (for a pure algebraic approach to Poisson and Jacobi reductions see \cite{Grabowski:1994,Ibort:1997}) were already a subject of studies in numerous papers, let us mention \cite{Albert:1998,Boyer:2000,Dragulete:2002,Geiges:1997,Guillemin:1982,Lerman:2001,
Lerman:2003,Lerman:2004,Loose:2001,Willet:2002}. These reductions are mainly reductions by Lie group actions, and generally use contact forms as the starting point, even  if in some cases (e.g, \cite{Willet:2002}) the final reduced structure does not depend on the choice of the contact form in its conformal (equivalence) class. Consequently, contact moment maps are usually defined on contact manifolds themselves, and associated with actions preserving the contact form, which is completely non-geometrical from the contact geometry point of view. An exception is the paper \cite{Zambon:2006}, which is devoted to generalized concepts of a reduction, and
\cite{Loose:2001}, where a concept of the moment map, although geometrically different, is equivalent to ours, but no symplectic geometry is used and only the zero-value of the moment map is considered (which is the trivial part in our setting). Note finally that contact reductions can be viewed as particular cases of reductions of Jacobi bundles. In \cite{Tortorella:2018}, the authors propose a version of coisotropic reductions of contact structures in the language of Jacobi bundles and Jacobi brackets. Some reductions of Jacobi brackets on functions on manifolds (e.g., trivial Jacobi bundles) are developed in \cite{Mikami:1987,Mikami:1989,Nunes:1989,Nunes:1990}.

\medskip
In this paper, we propose a completely different approach by viewing contact reductions as particular (homogeneous) symplectic reductions on symplectic $\Rt$-bundles. Therefore,  we can use some well-known methods from symplectic geometry, and the reductions are intrinsic from the very beginning, not referring to any auxiliary contact form representing the contact structure. Moreover, the proposed reduction procedure is very broad and includes not only contact analogs of the Marsden-Weinstein-Meyer reduction by a Lie group action, but also constant rank submanifold reductions. Our version of a contact Marsden-Weinstein-Meyer reduction is the following (cf. Theorem \ref{MAIN}).
\begin{theorem}\label{Cred}  Let $(M,C)$ be a contact manifold with a symplectic cover $\zt:P\to M$, let $\zr:G\ti M\to M$ be an action on $M$ of a Lie group $G$ by contactomorphisms, and let $J:P\to\g^*$ be the corresponding contact moment map. Let $\zm\in\g^*$ be a weakly regular value of $J$, so the connected Lie subgroup $G_{[\zm]}^0$ of $G$, corresponding to the Lie subalgebra
$$\g_{[\zm]}^0=\{\zx\in \ker(\zm)\,\big|\,\ad^*_\zx(\zm)\in\R\cdot\zm\}$$
of the Lie algebra $\g$ of $G$, acts on the submanifold $M_\zm=\zt(J^{-1}(\zm))$ of $M$. In particular, $G^0_{[\zm]}=G$ if $\zm=0$ and $G$ is connected.

\mn Suppose additionally that $\sT M_\zm$ is transversal to $C$ and the $G^0_{[\zm]}$-action on $M_\zm$ is free and proper.
Then we have a canonical submersion $\zp:M_\zm\to M(\zm)$ onto the orbit manifold $M(\zm)=M_\zm/G^0_{[\zm]}$, which is canonically a contact manifold with the contact structure $C(\zm)=\sT\zp(C\cap\sT M_\zm)$.
\end{theorem}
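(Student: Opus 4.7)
The strategy is to descend the whole problem to a homogeneous Marsden--Weinstein--Meyer reduction on the symplectic cover and then translate back to the contact picture via the symplectic-cover dictionary developed in the paper.

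First, I would lift all data to $P$. The $G$-action by contactomorphisms on $M$ has a canonical lift to a Hamiltonian $G$-action on $(P,\zw)$ that commutes with the $\Rt$-action, with $1$-homogeneous $\Ad^*$-equivariant moment map $J:P\to\g^*$ (so $J\circ h_s=s\cdot J$). This is the place where the non-intrinsic "contact moment map on $M$" is replaced by the intrinsic map $J$ on $P$, and where the choice of $\g_\zm^0$ in the statement makes sense: the condition $\ad^*_\zx(\zm)=0$ guarantees that $G_\zm^0$ preserves $\zm$, and the extra condition $\zx\in\ker(\zm)$ ensures the corresponding fundamental vector field on $P$ does not pick up a component along the Euler vector field $\zD$ of the $\Rt$-action (since $\xd J(\zD)=\zm$ by $1$-homogeneity).

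Next, set $P_\zm:=\zt^{-1}(M_\zm)$. I would show it coincides with the $\Rt$-saturation $\bigcup_{s\in\Rt}J^{-1}(s\zm)$, so that $P_\zm\to M_\zm$ is an $\Rt$-principal bundle. Because $\zm$ is a weakly regular value and the Euler field is transverse to $J^{-1}(\zm)$ whenever $\zm\neq 0$ (the case $\zm=0$ gives $P_0=J^{-1}(0)$ directly), $P_\zm$ is a smooth submanifold of $P$. By the linearity of the coadjoint action the group $G_\zm^0$ preserves every $J^{-1}(s\zm)$, hence acts on $P_\zm$ commuting with the $\Rt$-action; the fact that the $G_\zm^0$-action is free and proper downstairs on $M_\zm$ lifts, via the $\Rt$-principal bundle $P_\zm\to M_\zm$, to a free and proper action upstairs on $P_\zm$.

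I would then carry out the presymplectic reduction of $(P_\zm,\zw|_{P_\zm})$. The heart of the argument is the identification
\[
\ker\bigl(\zw|_{P_\zm}\bigr)_p = T_p(G_\zm^0\cdot p) \qquad \text{for all } p\in P_\zm,
\]
which rests on two ingredients: the standard symplectic-reduction computation that identifies the characteristic leaves of $J^{-1}(\zm)$ with $G_\zm$-orbits, and the transversality hypothesis $\sT M_\zm\pitchfork C$, which in cover terms says that the $\Rt$-orbit direction $\zD$ is not $\zw$-orthogonal to $\sT P_\zm$ and so does not enter the kernel. This is the main technical step; it is where the hypothesis $\zx\in\ker(\zm)$ built into $\g_\zm^0$ is indispensable, since otherwise the $G_\zm$-orbit would exceed the kernel by exactly one dimension. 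Granted this, the $G_\zm^0$-invariant form $\zw|_{P_\zm}$ descends to a symplectic form $\wt\zw$ on the quotient manifold $\wt P:=P_\zm/G_\zm^0$. Because the commuting $\Rt$-action on $P_\zm$ passes to a free proper $\Rt$-action on $\wt P$ with quotient $M_\zm/G_\zm^0=M(\zm)$, and because $1$-homogeneity of $\zw$ is preserved under the descent, $(\wt P,\wt\zw)\to M(\zm)$ is a symplectic $\Rt$-bundle, i.e.\ a symplectic cover of $M(\zm)$.

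Finally, I would identify the contact structure on $M(\zm)$ coming from this symplectic cover with the one prescribed in the theorem. By the paper's dictionary, the contact distribution induced by a symplectic $\Rt$-bundle is obtained by taking the $\zw$-orthogonal of the Euler field and projecting; applied to $\wt P$ and unwrapped through the projection $\zp:M_\zm\to M(\zm)$, this produces precisely $\sT\zp(C\cap\sT M_\zm)$, which is the stated $C(\zm)$. The main obstacle in the whole proof is the characteristic-kernel identification in the presymplectic reduction step; the rest is a mechanical transfer between the contact and homogeneous-symplectic sides.
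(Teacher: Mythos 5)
Your plan follows essentially the same route as the paper's own proof: lift the action to the symplectic cover, identify $\zt^{-1}(M_\zm)$ with the $\Rt$-saturation $P_{[\zm]}=J^{-1}([\zm]^\ti)$, show that the characteristic distribution of $\zw\,\big|_{P_{[\zm]}}$ is spanned by the fundamental vector fields of $\g^0_\zm$ (transversality of $M_\zm$ excluding the Euler direction, the condition $\zx\in\ker(\zm)$ cutting $\g_\zm$ down to $\g^0_\zm$), then reduce and read off $C(\zm)=\sT\zp(C\cap\sT M_\zm)$ through the symplectic-cover dictionary — exactly the chain of Theorem \ref{m1} and Theorem \ref{MAIN}. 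The only difference is organizational: you quotient $P_{[\zm]}$ by $G^0_\zm$ directly and then descend the $\Rt$-structure, whereas the paper routes the same computation through its general reduction of presymplectic $\Rt$-bundles (Theorems \ref{main} and \ref{main1}).
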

\no Note that in the case $\zm=0$ our contact reduction comes down to the standard Marsden-Weinstein-Meyer reduction on the symplectic cover of the contact manifold.

\medskip Actually, in the paper we consider a more general situation when a field of hyperplanes $C$ on a manifold $M$ is only \emph{precontact}, i.e., the corresponding 2-form on $C$ is of constant rank (this form is nondegenerate in the contact case). We associate with precontact structures $(M,C)$ presymplectic $\Rt$-bundles, i.e., principal $\Rt$-bundles $\zt:P\to M$ equipped with a 1-homogeneous presymplectic form $\zw$. Note that we always understand presymplectic forms as closed 2-forms \textbf{of constant rank} and all objects are generally smooth. One exception is distributions understood as just fields of linear subspaces of tangent bundles (non-necessarily smooth). On the other hand, constant rank distributions are always assumed to be \emph{regular}, i.e., of constant rank and smooth. In particular, when we speak about fields of hyperplanes, we always mean smooth fields of hyperplanes. Groups and precontact manifolds are connected, if not declared otherwise.

\medskip With every contact vector vector field on a precontact manifold $(M,C)$, i.e., a vector field whose local flow preserves $C$, we associate a Hamiltonian vector field on the presymplectic cover $(P,\zw)$ of $(M,C)$ and a unique 1-homogeneous Hamiltonian on $P$. In the case of a contact group action, this leads to a uniquely defined equivariant moment map $J:P\to\g^*$, so we can follow the ideas of Meyer, Marsden and Weinstein \cite{Marsden:2001,Meyer:1973} if the reduction by symmetry group is concerned. Note finally that presymplectic reductions have already been studied by some authors (e.g, \cite{Bursztyn:2012,Munoz:1999}), including reductions of Dirac structures \cite{Blankenstein:2001,Brahic:2014}.

\medskip
The paper is organized as follows.
In the next section, we present the basics of our understanding of precontact geometry, together with introducing presymplectic $\Rt$-bundles. Also, fundamental theorems describing relations between precontact and presymplectic settings are proved there. In Section 3, we show a precontact analog of the presymplectic reduction, i.e., precontact-to-contact reduction, while Section 4 is devoted to precontact and contact Hamiltonian dynamics. Different types of submanifolds in a precontact manifold are considered in Section 5, together with a contact analog of the constant rank reduction in symplectic geometry. The precontact analogs of the Marsden-Weinstein-Meyer reduction are proved in Section 6. We end up with concluding remarks, showing in part possible applications and follow-up studies.

\section{Principles of precontact geometry}
For our  picture of contact geometry, we refer  generally to \cite{Bruce:2017,Grabowska:2022,Grabowski:2013}. It is easy to generalize the concepts developed there to a precontact setting.

Let $M$ be a manifold of dimension $m$, and let $C\subset \sT M$ be a \emph{field of hyperplanes} on $M$, i.e., a distribution with $(m-1)$-dimensional fibers (corank 1 distribution). Such a distribution is, at least locally, the kernel of a nonvanishing 1-form $\zh$ on $M$, i.e., $C=\ker(\zh)$. Of course, the 1-form $\zh$ is determined only up to conformal equivalence, since $\ker(\zh')=\ker(\zh)$ if and only if $\zh'=f\zh$, where $f$ is a nowhere-vanishing function. Denote with $\zr^C:\sT M\to L^C$ the canonical projection onto the line bundle $L^C=\sT M/C\to M$, and with $\zn^C:C\ti_MC\to L^C$ the skew-symmetric bilinear map which for vector fields $X,Y$ on $M$, taking values in $C$, reads $\zn^C(X,Y)=\zr^C([X,Y])$.

\begin{definition} The distribution $\ker(\zn^C)$ we will denote simply $\zq(C)$ and call the \emph{characteristic distribution of $C$}.  A hyperplane field $C\subset\sT M$ we call a \emph{precontact structure of rank $(2r+1)$} if the 2-form $\zn^C$ on $C$ is of rank $2r$, i.e., the distribution $\zq(C)$ is regular of rank $(m-2r-1)$. Manifolds equipped with a precontact structure (of rank $(2r+1)$) we will call \emph{precontact manifolds} (of rank $(2r+1)$). Any nonvanishing (local) 1-form which determines a precontact structure $C$ of rank $(2r+1)$ as its kernel we call a \emph{precontact form of rank $(2r+1)$}. If $(2r+1)$ is the dimension of $M$, then any precontact structure of rank $(2r+1)$ we call a \emph{contact structure} and any (local) 1-form $\zh$ such that $C=\ker(\zh)$ we call a \emph{contact form}.
\end{definition}
\begin{remark} Of course, all 1-forms conformally equivalent to a precontact form $\zh$ of rank $(2r+1)$ are also precontact forms of rank $(2r+1)$ and contactomorphisms, i.e., diffeomorphisms respecting the fields of hyperplanes, are possible only between precontact structures of the same rank. Note that a differential 2-form $\zb$ is of (constant) rank $2r$ if and only if $\zb^{r}$ is nowhere-vanishing and $\zb^{r+1}=0$. We will frequently use this fact  in the sequel. We will denote the value of a differential form $\zb$ at point $y$ as $\zb_y$ or $\zb(y)$, depending on editorial needs.
\end{remark}
\begin{proposition}\label{p1} Let $C$ be a field of hyperplanes on a manifold $M$ of dimension $m$, and let $\zh$ be a nonvanishing 1-form on $M$ such that $C=\ker(\zh)$. The following are equivalent:
\begin{description}
\item{(1)} the distribution $C$ is a precontact structure of rank $(2r+1)$;
\item{(2)} the 2-form $\xd\zh$ is of rank $2r$ on $C=\ker(\zh)$;
\item{(3)} the $(2r+1)$-form $\zh\we(\xd\zh)^r$ is nonvanishing and $\zh\we(\xd\zh)^{r+1}=0$.
\item{(4)} the characteristic distribution $\zq(\zh)\subset\sT M$ of $\zh$, defined by
$$\zq(\zh)(y)=\big\{Y\in\ker(\zh(y))\,\big|\,\exists\, a\in\R\ \big[\,i_Y\xd\zh=a\cdot\zh(y)\,\big]\,\big\},$$
is regular of rank $(m-2r-1)$;
\end{description}
In any, thus all, of these cases, $\zq(C)=\zq(\zh)$ and $\zq(C)$  is regular and involutive, so it determines a foliation $\cF_C$ on $M$, whose leaves are maximal integral submanifolds of $\zq(C)$.
\end{proposition}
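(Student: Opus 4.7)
My plan is to pick a local $1$-form $\zh$ with $C=\ker(\zh)$ (which exists around every point) and to run everything through the identification of $\zn^C$ with $\xd\zh|_{C\ti C}$, together with the conformal invariance of the four conditions. I will address the implications in the cycle (1)$\Leftrightarrow$(2)$\Leftrightarrow$(3) and (2)$\Leftrightarrow$(4), and only at the end prove $\zq(C)=\zq(\zh)$ and involutivity.

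\textbf{Step 1: (1)$\Leftrightarrow$(2).} The local 1-form $\zh$ also trivializes the line bundle $L^C=\sT M/C$. For $X,Y\in\Gamma(C)$ the Cartan formula collapses to $\xd\zh(X,Y) = -\zh([X,Y])$, which under the chosen trivialization of $L^C$ reads $\zn^C(X,Y) = -\xd\zh(X,Y)$. Hence $\zn^C$ and $\xd\zh|_{C\ti C}$ have the same rank and the same radical at every point; (1)$\Leftrightarrow$(2) is immediate, and this identification will also yield $\zq(C)=\zq(\zh)$ once we identify $\zq(\zh)$ with the radical of $\xd\zh|_C$.

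\textbf{Step 2: (2)$\Leftrightarrow$(3).} I pick a local vector field $Z$ transversal to $C$ with $\zh(Z)=1$ and write $\xd\zh=\zb+\zh\we\zg$ with $\zb:=\xd\zh-\zh\we i_Z\xd\zh$ satisfying $i_Z\zb=0$ and $\zb|_C=\xd\zh|_C$. Since $\zh\we\zh=0$, the binomial expansion collapses to $\zh\we(\xd\zh)^k=\zh\we\zb^k$. Because $\zb^k$ annihilates $Z$, the $(2k+1)$-form $\zh\we\zb^k$ is pointwise nonzero exactly when $\zb^k|_C$ is, which by the normal form of a skew 2-form on a vector space is equivalent to $\mathrm{rank}(\xd\zh|_{C_y})\geq 2k$. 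Thus (3) is precisely the conjunction $\mathrm{rank}(\xd\zh|_C)\geq 2r$ and $\mathrm{rank}(\xd\zh|_C)<2r+2$, i.e.\ $=2r$.

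\textbf{Step 3: (2)$\Leftrightarrow$(4) and $\zq(C)=\zq(\zh)$.} Any covector on $\sT_yM$ vanishing on the codimension-one subspace $C_y$ must be a scalar multiple of $\zh_y$, so applied to $i_Y\xd\zh$ this gives
\[
\zq(\zh)_y \;=\; \{Y\in C_y : i_Y\xd\zh|_{C_y}=0\},
\]
i.e.\ $\zq(\zh)_y$ is the radical of $\xd\zh|_{C_y}$. Together with Step 1 this forces $\zq(C)=\zq(\zh)$. Its fiber dimension equals $(m-1)-\mathrm{rank}(\xd\zh|_{C_y})$, so being of constant dimension $m-2r-1$ is equivalent to $\xd\zh|_C$ having constant rank $2r$; smoothness of $\zq(\zh)$ as a regular distribution comes for free, as the constant-corank kernel of a smooth bundle map.

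\textbf{Step 4: Involutivity.} For a section $X$ of $\zq(\zh)$, $L_X\zh=i_X\xd\zh+\xd(i_X\zh)=a\,\zh$ for some function $a$. Substituting $i_Y\xd\zh=b\,\zh$ and $L_X\xd\zh=\xd(a\zh)$ into $i_{[X,Y]}\xd\zh = L_X(i_Y\xd\zh)-i_Y L_X\xd\zh$ and using $\zh(Y)=0$ yields $i_{[X,Y]}\xd\zh=(X(b)-Y(a))\,\zh$; the same computation gives $\zh([X,Y])=-\xd\zh(X,Y)=-a\zh(Y)=0$, so $[X,Y]\in\zq(\zh)$, and Frobenius produces the foliation $\cF_C$. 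I do not foresee any serious obstacle here; the proof is essentially bookkeeping with Cartan's formulas. The only conceptual points to watch are to keep the $\R$-valued form $\xd\zh|_C$ and the intrinsic $L^C$-valued form $\zn^C$ carefully separated, and to note that all four conditions and the distribution $\zq(C)$ are manifestly invariant under conformal changes $\zh\leadsto f\zh$, so working with a merely local $\zh$ is harmless.
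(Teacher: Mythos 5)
Your proof is correct and follows essentially the same route as the paper's: the identification $\zn^C(X,Y)=-\xd\zh(X,Y)$ for (1)$\Leftrightarrow$(2), the identification of $\zq(\zh)$ with the radical of $\xd\zh\,\big|_C$ for (2)$\Leftrightarrow$(4) and $\zq(C)=\zq(\zh)$, and the wedge-power rank criterion for (2)$\Leftrightarrow$(3). The only differences are that your Step 2 makes explicit, via the splitting $\xd\zh=\zb+\zh\we i_Z\xd\zh$ with a transversal $Z$, the pointwise linear-algebra facts the paper merely asserts, and your Step 4 supplies a Cartan-calculus proof of the involutivity of $\zq(C)$, which the paper cites as a well-known fact.
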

\begin{proof}
$(1\Leftrightarrow 2)$ In the trivialization of $L^C$ induced by $\zh$,
$$\zh^\#:L^C\to M\ti\R\,,\quad \zh^\#([X(y)])=\left(y,i_{X(y)}\zh\right),$$
we have, for vector fields $X,Y\in C$,
\be\label{zn}\zn^C(X,Y)=\zh([X,Y])=-\xd\zh(X,Y)+X\La\zh,Y\Ra-Y\La\zh,X\Ra=-\xd\zh(X,Y),
\ee
so the rank of $\xd\zh\,\big|_C$ equals the rank of $\zn^C$. Here, $\La\cdot,\cdot\Ra$ denotes the canonical pairing between vectors and covectors.

\mn $(2\Leftrightarrow 3)$ Suppose that $\zh\we(\xd\zh)^r$ is nonvanishing at $y\in M$. Then $(\xd\zh)^r$ is nonvanishing on $C_y$, so the rank of $\xd\zh$ on $C_y=\ker(\zh_y)$ is at least $2r$. If this rank is greater than $2r$, then it is at least $2(r+1)$, so $\zh\we(\xd\zh)^{r+1}\ne 0$ at $y$.
Conversely, if the rank of $\xd\zh$ on $\ker(\zh)$ is $2r$, then clearly $\zh\we(\xd\zh)^r$ is nonvanishing and $\zh\we(\xd\zh)^{r+1}=0$.

\mn $(2\Leftrightarrow 4)$ Let us first show that for each $y\in M$ we have $\zq(\zh)(y)=\ker\left(\xd\zh\,\big|_{C_y}\right)$.
If $Y\in\zq(\zh)(y)$, then $Y\in\ker(\zh(y))$ and $i_Y(\xd\zh)=a\zh(y)$ for some $a\in\R$, so $i_Y\xd\zh$ vanishes on $C_y=\ker(\zh)(y)$, thus belongs to kernel of $\xd\zh\,\big|_C$. Conversely, if $Y\in\ker\left(\xd\zh\,\big|_{C_y}\right)$, then $Y\in\ker(\zh(y))$ and the linear function $i_Y\xd\zh$ on $\sT_yM$ vanishes on $\ker(\zh(y))$. Hence, it is of the form $a\zh(y)$ for some $a\in\R$. Consequently, $\xd\zh\,\big|_{C_y}$ is of rank $2r$ if and only if the dimension of its kernel is $m-2r-1$, which finishes the proof.

Finally, from the proof of $(2\Leftrightarrow 4)$ it follows that $\zq(C)=\zq(\zh)$.
The involutivity of $\zq(C)$ is a well-known fact.
\end{proof}
\no We have the following version of Darboux's Theorem for precontact structures.
\begin{theorem}[Precontact Darboux Theorem]\label{Dt} Let $C$ be a precontact structure of rank $(2r+1)$ on a manifold $M$ of dimension $m$, and let $y_0\in M$. Then there are local coordinates $(z,p_i,q^i,u^j)$ in a neighbourhood $U$ of $y_0$, with $i=1,\dots,r$ and $j=1,\dots,m-2r-1$, such that these coordinates vanish at $y_0$ and $C$ is on $U$ the kernel of the 1-form
\be\label{Dc} \zh=\xd z-p_i\,\xd q^i.\ee
\end{theorem}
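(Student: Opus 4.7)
My plan is to reduce the statement to the classical contact Darboux theorem by quotienting out the characteristic foliation $\cF_C$ supplied by Proposition \ref{p1}.

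First, Proposition \ref{p1} gives that the characteristic distribution $\zq(C)$ is regular, involutive, and of rank $m-2r-1$. By Frobenius, there exist a neighbourhood $U$ of $y_0$, a surjective submersion $\zx\colon U\to\bar U$ onto a $(2r+1)$-dimensional manifold $\bar U$ with $\ker(\xd\zx)=\zq(C)$, and coordinates $u^1,\dots,u^{m-2r-1}$ on $U$ that vanish at $y_0$ and trivialize the fibres of $\zx$.

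Next, I show that $C$ descends along $\zx$ to a contact structure on $\bar U$. Because $\zq(C)\subset C$ coincides with $\ker(\xd\zx)$, the hyperplane field $C$ pushes forward to a well-defined hyperplane field $\bar C\subset\sT\bar U$ with $C=(\xd\zx)^{-1}(\bar C)$. Picking (after shrinking $\bar U$ if necessary) a nonvanishing 1-form $\bar\zh$ on $\bar U$ with $\bar C=\ker(\bar\zh)$, the pull-back $\zh':=\zx^*\bar\zh$ is a precontact form on $U$ defining $C$, and $\xd\zh'|_C=\zx^*(\xd\bar\zh|_{\bar C})$. By Proposition \ref{p1}(4), the kernel of $\xd\zh'|_C$ is $\zq(\zh')=\zq(C)=\ker(\xd\zx)$; pushing forward to $\bar U$ this kernel becomes trivial, so $\xd\bar\zh|_{\bar C}$ is nondegenerate and $\bar C$ is a genuine contact structure on the $(2r+1)$-manifold $\bar U$.

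By the classical contact Darboux theorem (the case $m=2r+1$, e.g.\ in \cite{Arnold:1989,Geiges:2008,Libermann:1987}), there exist coordinates $(z,p_i,q^i)$ on a neighbourhood of $\zx(y_0)$ in $\bar U$, vanishing at $\zx(y_0)$, such that $\bar C=\ker(\xd z-p_i\,\xd q^i)$. Pulling these functions back via $\zx$ and adjoining the fibre coordinates $u^j$ produces coordinates $(z,p_i,q^i,u^j)$ on a smaller neighbourhood of $y_0$ in $M$: the first $2r+1$ functions are constant along the fibres of $\zx$ and functionally independent on $\bar U$, while the $u^j$ independently parameterize these fibres. In these coordinates $\xd z-p_i\,\xd q^i=\zx^*(\xd z-p_i\,\xd q^i)$, whose kernel equals $(\xd\zx)^{-1}(\bar C)=C$, proving \eqref{Dc}.

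The main obstacle lies in the descent step: confirming that the push-forward of $C$ along $\zx$ is a bona fide contact structure. The key ingredient is exactly the identification $\zq(C)=\ker(\xd\zh'|_C)$ supplied by Proposition \ref{p1}(4), which guarantees that $\xd\bar\zh$ does not acquire a new degenerate direction on $\bar C$; once this is in hand, the rest is a combination of Frobenius and the classical contact Darboux theorem.
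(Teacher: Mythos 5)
Your overall strategy (quotient locally by the characteristic foliation, apply the classical contact Darboux theorem on the $(2r+1)$-dimensional leaf space, then pull the coordinates back and adjoin fibre coordinates $u^j$) is viable and is a different route from the paper's proof, which never forms a quotient but instead works in foliation-adapted coordinates and shows by a direct computation that a normalized defining $1$-form depends only on the transverse coordinates. However, there is a genuine gap at the decisive point of your argument, the descent step. You write that ``because $\zq(C)\subset C$ coincides with $\ker(\xd\zx)$, the hyperplane field $C$ pushes forward to a well-defined hyperplane field $\bar C$.'' Containing the vertical distribution $\ker(\sT\zx)$ is necessary but not sufficient for a distribution to be projectable: a hyperplane field containing the verticals can still rotate along the fibres of $\zx$, in which case $\sT_y\zx(C_y)$ depends on the point $y$ within a fibre and not only on $\zx(y)$, and no field $\bar C$ exists. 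So the very existence of $\bar C$ --- which is the real content of the theorem, namely that $C$ is locally a pull-back from the leaf space --- is asserted rather than proved; the obstacle you flag at the end (nondegeneracy of $\xd\bar\zh$ on $\bar C$) is the easier half.

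The missing ingredient is invariance of $C$ under the local flows of vector fields with values in $\zq(C)$. This does hold, but for a reason you never invoke: for $X\in\zq(C)$ and $Y\in C$ one has $\zr^C([X,Y])=\zn^C(X,Y)=0$, i.e.\ $[X,Y]\in C$, so the flow of $X$ preserves $C$ and hence $C$ is constant along the fibres of $\zx$ (equivalently, $\Ll_X\zh=i_X\xd\zh=a\,\zh$ for any local defining form $\zh$, by Proposition \ref{p1}(4) and $i_X\zh=0$, so the flow preserves the conformal class of $\zh$). This is precisely the argument the paper uses in the proof of the precontact-to-contact reduction (Theorem \ref{cr}); its counterpart inside the paper's own proof of the Darboux theorem is the step showing that, after normalizing $f_1=1$, the relation $\pa f_l/\pa u^j=a_jf_l$ forces $a_j=0$, so the form does not depend on the leaf coordinates $u^j$. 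Once this invariance lemma is inserted, the rest of your proof (nondegeneracy of $\xd\bar\zh\,\big|_{\bar C}$ via the identification of $\zq(C)$ with $\ker\big(\xd\zh'\,\big|_C\big)$, contact Darboux on $\bar U$, and pull-back together with the fibre coordinates) goes through.
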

\begin{proof} Consider a neighbourhood $U$ of $y_0$ which is equipped with coordinates $(z^l,u^j)$ such that the foliation $\cF_C$ is defined locally by the system of equations $z^l=const$ and
$$\zh=f_l(z,u)\,\xd z^l+g_j(z,u)\,\xd u^j$$
is a local 1-form such that $C=\ker(\zh)$. Of course, $l=1,\dots,2r+1$ and $j=1,\dots,m-2r-1$. Since $\pa_{u^j}\in \zq(C)$, we have $g_j=0$ and we can assume that $f_1=1$, as $\zh\ne 0$. Moreover, the condition $i_{\pa_{u^j}}\xd\zh=a_j\zh$ (Proposition \ref{p1} (4)) implies that
$$\frac{\pa f_l}{\pa u^j}=a_j\, f_l,\quad l=1,\dots,2r+1,\ j=1,\dots,m-2r-1.$$
But for $f_1=1$ we get $a_j=0$, so $\zh$ depends on the coordinates $(z^l)$ only. Since $\zh\ne 0$ and $\xd\zh$ is of rank $2r$ on $\ker(\zh)$, the 1-form $\zh$ is actually a contact form in coordinates $(z^l)$, $l=1\dots,2r+1$. Using the contact Darboux Theorem we get (\ref{Dc}), and it is easy to see that we can additionally require that $y_0=0$ in the Darboux coordinates.

\end{proof}
\no Note that the above theorem does not mean that any precontact form of rank $(2r+1)$ inducing $C$ reads locally as in (\ref{Dc}). Indeed, $\xd\zh$ is of rank $2r$ for $\zh$ as in (\ref{Dc}). On the other hand, $\zh'=e^{u^1}(\xd z-\sum_{i=1}^rp_i\xd q^i)$ is in the same conformal class as $\zh$, but
$$\xd\zh'=e^{u^1}\left(\xd u^1\we\xd z-\sum_{i=1}^r\left(\xd p_i\we\xd q^i+p_i\,\xd u^1\we\xd q^i\right)\right),$$
is of rank $2(r+1)$, not $2r$.
\begin{remark}
We will call the foliation $\cF_C$ on the precontact manifold $(M,C)$ the \emph{characteristic foliation}. Note that the characteristic distribution $\zq(C)$ of a precontact structure $C$ is trivial, i.e., $2r+1=\dim(M)$, if and only if the precontact structure is actually a contact one.
According to the Darboux classification of 1-forms \cite{Darboux:1882} (see also {\cite[Ch. V.4]{Libermann:1987}}), the 1-form (\ref{Dc}) can be characterized as a 1-form of class $(2r+1)$ on $U$, i.e., a 1-form $\zh$ satisfying $\zh\we\xd\zh^r\ne 0$  and $(\xd\zh)^{r+1}=0$.

Note that  precontact structures are understood by many authors as just distributions of corank 1,  and precontact forms as just nonvanishing 1-forms (e.g, \cite{Tortorella:2018,Vitagliano:2018,Zambon:2006}), which is too weak in our opinion. In practice and applications, even such authors put additional requirements that made the structures close to what we call precontact structures and precontact forms. In \cite{deLeon:2019,deLeon:2019a} \emph{precontact forms} of class $(2r+1)$ are defined as in the Darboux' classification, so
that the precontact manifolds in  \cite{deLeon:2019,deLeon:2019a} are trivial examples of the precontact structures in our sense (in which $L^C=\sT M/C$ can be non-trivializable).

In what follows, we will also use the concept of a \emph{presymplectic form} after Souriau \cite{Souriau:1970}: a 2-form $\zw$ is \emph{presymplectic} of rank $2(r+1)$ if $\zw$ is closed and its characteristic distribution $\zq(\zw)$ has constant rank equal to $2(r+1)$. Being automatically involutive, $\zq(\zw)$ defines a foliation $\cF_\zw$ which we call the \emph{characteristic foliation} of $\zw$. Again, some authors consider presymplectic forms simply as closed 2-forms, which is too weak for our purposes. As easily seen, our presymplectic forms are by definition non-zero.
\end{remark}
\begin{definition}
A \emph{contactomorphism} between precontact structures $(M_i,C_i)$, $i=1,2$, is a diffeomorphism $\zf:M_1\to M_2$ such that $\sT\zf(C_1)= C_2$. A \emph{contact vector field} on a precontact manifold $(M,C)$ is a vector field whose local flow consists of contactomorphisms. For a vector field $X$ on $M$ and a distribution $D\subset\sT M$ we will write $X\in D$ if $X$ takes values in $D$.
\end{definition}
The following easy proposition states the properties of contactomorphisms and contact vector fields.
\begin{proposition} A diffeomorphism $\zf:M_1\to M_2$ between precontact structures being locally the kernels of precontact forms $\zh_i$, $i=1,2$, is a contactomorphism if and only if $\zf^*(\zh_2)$ is in the conformal class of $\zh_1$, i.e., $\zf^*(\zh_2)=f\zh_1$ for a nonvanishing function $f$. A vector field $X$ on a precontact manifold $(M,C)$ is a contact vector field if and only if $[X,Y]\in C$ for any vector field $Y\in C$, and if and if and only if $\Ll_X\zh=g\zh$ for any (local) precontact form inducing $C$, where $g$ is a function (not necessary nonvanishing) on $M$.
\end{proposition}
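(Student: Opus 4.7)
The plan is to split the proposition into its two claims and reduce each to standard linear-algebraic / Cartan-calculus facts applied pointwise to the nonvanishing 1-forms $\zh, \zh_1, \zh_2$.

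\medskip\noindent
\textbf{First claim.} By definition, $\zf$ is a contactomorphism iff $\sT\zf(C_1)=C_2$, which after pullback is equivalent to $\ker(\zf^*\zh_2)=\ker(\zh_1)=C_1$. The key linear fact: on a finite-dimensional vector space, two nonzero covectors with the same kernel differ by a nonzero scalar. Applied pointwise to the nowhere-vanishing forms $\zf^*\zh_2$ and $\zh_1$ (both defining $C_1$ locally), this produces a well-defined nowhere-vanishing scalar function $f$ with $\zf^*\zh_2=f\zh_1$; smoothness of $f$ follows locally from the formula $f=(\zf^*\zh_2)(Z)/\zh_1(Z)$ evaluated on any local vector field $Z$ with $\zh_1(Z)\ne 0$ (e.g.\ the Reeb-type field of a local contact presentation given by the Darboux form in Theorem \ref{Dt}). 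The converse is immediate from the same identity of kernels.

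\medskip\noindent
\textbf{Second claim.} First I would connect the flow condition with the Lie bracket condition. The standard fact on invariance of regular distributions says that the local flow $\zf_t$ of $X$ satisfies $\sT\zf_t(C)=C$ iff $[X,Y]\in C$ for every (local) vector field $Y\in C$. Since $C$ is a regular corank-$1$ distribution, this is a clean pointwise statement and requires no additional input. Thus the first equivalence — $X$ contact iff $[X,Y]\in C$ for all $Y\in C$ — is established.

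\medskip\noindent
\textbf{Third step: from the bracket condition to the Lie derivative condition.} For any $Y\in C$, writing $\zh(Y)=0$ and using Cartan's identity
\begin{equation*}
(\Ll_X\zh)(Y)=X\bigl(\zh(Y)\bigr)-\zh\bigl([X,Y]\bigr)=-\zh\bigl([X,Y]\bigr),
\end{equation*}
we see that $[X,Y]\in C$ for all $Y\in C$ is equivalent to $\Ll_X\zh$ vanishing on $C=\ker(\zh)$. Once $\Ll_X\zh$ annihilates $\ker(\zh)$ at every point, the nonvanishing of $\zh$ forces $\Ll_X\zh=g\,\zh$ pointwise for a unique scalar $g(y)$; smoothness of $g$ follows as in the first claim by evaluating both sides on a local vector field $Z$ with $\zh(Z)\ne 0$ and setting $g=(\Ll_X\zh)(Z)/\zh(Z)$. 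Conversely, if $\Ll_X\zh=g\zh$, the same identity shows $\zh([X,Y])=0$ for all $Y\in C$, giving the bracket condition.

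\medskip\noindent
I do not expect a genuine obstacle here: the only delicate points are the extraction of smooth scalar factors $f$ and $g$ from pointwise proportionality, and this is routine once one localises by a choice of transverse vector field $Z$. The two equivalences therefore chain together to yield all three characterisations of a contact vector field.
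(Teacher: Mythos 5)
Your proof is correct. The paper offers no written proof of this proposition (it is introduced as an easy statement), and your argument is the standard one evidently intended: pointwise proportionality of nonvanishing covectors with equal kernels, with smoothness of the factors $f$ and $g$ extracted by evaluating on a local vector field transverse to $C$, combined with the Leibniz identity $(\Ll_X\zh)(Y)=X(\zh(Y))-\zh([X,Y])$. The only ingredient you quote without proof --- that the local flow of $X$ preserves the regular distribution $C$ if and only if $[X,Y]\in C$ for all vector fields $Y\in C$ --- is standard and is likewise used without proof by the paper itself (e.g.\ in the proof of Theorem \ref{cr}), so nothing is missing at the level of rigour of the text.
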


\subsection{Presymplectic $\Rt$-bundles}
Throughout the paper, we will use the following notation: if $L\to M$ is a vector bundle, then we denote with $L^\ti$ the open submanifold in $L$ consisting of non-zero  vectors, i.e., $L^\ti=L\setminus 0_M$, where $0_M$ is the zero-section of $L$. Of course, if $L$ is a line bundle, i.e., the rank of $L$ is 1, then $\Lt\to M$ is canonically an $\Rt$-principal bundle with respect to the multiplication by non-zero reals. In what follows, principal bundles with the structure group $\Rt$ we will call simply \emph{$\Rt$-bundles}. Any precontact structure $(M,C)$ determines a line subbundle of the cotangent bundle $\sT^*M$, namely the annihilator $C^o$ of $C$. Any (local) precontact form $\zh$ determining $C$ induces a local trivialization of $C^o$, represented by the line subbundle $[\zh]\subset\sT^*M$ generated by $\zh$, and given by
\be\label{zhti}I_\zh:M\ti\R\to[\zh]\subset\sT^*M,\quad I_\zh(y,s)=s\cdot\zh(y),\ee
Note that under this trivialization, the pull-back of the canonical symplectic form $\zw_M$ on $\sT^*M$ is
\be\label{sf} \zw_\zh=I^*_\zh(\zw_M)=I^*_\zh(\xd\zvy_M)=\xd\left(I^*_\zh(\zvy_M)\right)=\xd(s\zh)=\xd s\we\zh+s\cdot\xd\zh,\ee
where $\zvy_M$ is the Liouville 1-form on $\sT^*M$.
Since the zero-section of $\sT^*M$ is a Lagrangian submanifold, we will consider exclusively the form $\zw_\zh$ restricted to $M\ti\Rt$, which corresponds to the restriction of $\zw_M$ to $[\zh]^\ti$. Note that $\zt:M\ti\Rt\to M$ is a trivial $\Rt$-bundle, and the opposite of the fundamental vector field of the $\Rt$-action and $1\in\R$, where we understand $\R$ as the Lie algebra of $\Rt$, is $\n=s\pa_s$.

\mn Of course, the map (\ref{zhti}), thus the closed 2-form $\zw_\zh$, is defined for any 1-form $\zh$.
The following proposition describes a relation between the characteristic distribution of $\zw_\zh$,
$$\zq(\zw_\zh)=\big\{X\in\sT(M\ti\Rt)\,\big|\,i_X\zw_\zh=0\big\},$$
and the characteristic distribution of $\zh$,
$$\zq(\zh)(y)=\big\{Y\in\ker(\zh(y))\,\big|\,\exists\, a\in\R\ \big[\,i_Y\xd\zh=a\cdot\zh(y)\,\big]\,\big\}.$$
\begin{proposition}\label{zq}
Let $\zh$ be a 1-form on a manifold $M$. For any $Y\in\sT_y M$ and any $a\in\R$, the vector $X=Y-a\cdot s\,\pa_s\in\sT_{(y,s)}(M\ti\Rt)$ is a characteristic vector of the closed 2-form $\zw_\zh$ if and only if $Y$ is a characteristic vector of the form $\zh$.

In particular, the fibers of the characteristic distribution $\zq(\zw_\zh)$ are projected by $\sT\zt$ onto the fibers of the characteristic distribution $\zq(\zh)$ of $\zh$. The projection $\sT_x\zt:\zq(\zw_\zh)(x)\to\zq(\zh)(\zt(x))$ is an isomorphism of vector spaces if and only if $\zh(\zt(x))$ is nonvanishing, so $a$ is uniquely determined. If $\zh(y)=0$, then $\pa_s$ is a characteristic vector of $\zw_\zh(y,s)$ for all $s\in\Rt$.
\end{proposition}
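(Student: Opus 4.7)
The whole statement reduces to a direct computation with the explicit presentation $\omega_\eta = \xd s\wedge\eta + s\cdot\xd\eta$ from \eqref{sf}. I will decompose interior products along the fiber direction $\pa_s$ and along a horizontal lift, exploiting the key fact that $\eta$ and $\xd\eta$, viewed on $M\times\Rt$ via pullback, satisfy $i_{\pa_s}\eta = 0$ and $i_{\pa_s}\xd\eta=0$, while $i_Y\xd s=0$ for any $Y\in\sT_yM$.

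First I would compute the two contractions separately at $(y,s)$:
\begin{align*}
i_{\pa_s}\omega_\eta &= i_{\pa_s}(\xd s\wedge\eta) + s\, i_{\pa_s}\xd\eta \;=\; \eta,\\
i_Y\omega_\eta &= i_Y(\xd s\wedge\eta) + s\, i_Y\xd\eta \;=\; -\La\eta(y),Y\Ra\,\xd s + s\, i_Y\xd\eta.
\end{align*}
Combining them for $X = Y - a\cdot s\,\pa_s$ gives
\[
i_X\omega_\eta \;=\; -\La\eta(y),Y\Ra\,\xd s \;+\; s\,\bigl(i_Y\xd\eta - a\,\eta(y)\bigr).
\]
Since $\sT^*_{(y,s)}(M\times\Rt)$ splits canonically as $\sT^*_yM\oplus\R\cdot\xd s$, the form $i_X\omega_\eta$ vanishes if and only if both summands vanish. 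Using $s\in\Rt$, these are exactly the two defining conditions $\eta(y)(Y)=0$ and $i_Y\xd\eta = a\,\eta(y)$ for $Y\in\zq(\eta)(y)$. This proves the equivalence.

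For the remaining assertions I would use the tangent map $\sT\zt$, whose kernel at $(y,s)$ is $\R\cdot\pa_s$. From $i_{\pa_s}\omega_\eta = \eta$ computed above, one reads off at once that $\pa_s\in\zq(\omega_\eta)(y,s)$ if and only if $\eta(y)=0$, which gives the last sentence of the proposition. When $\eta(y)\ne 0$, the element $\eta(y)$ spans a line in $\sT^*_yM$, so for any $Y\in\zq(\eta)(y)$ there is a unique scalar $a\in\R$ with $i_Y\xd\eta = a\,\eta(y)$; this provides the unique characteristic vector of $\omega_\eta$ over $Y$ and shows simultaneously that $\sT_{(y,s)}\zt\bigl|_{\zq(\omega_\eta)}$ is both injective (no nontrivial vertical characteristic vector exists since $\eta(y)\ne 0$) and surjective onto $\zq(\eta)(y)$, hence an isomorphism.

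The calculation is essentially routine; the only subtle point is the linear-independence argument that separates the $\xd s$-component from the horizontal component, which is what legitimately pins down $a$ and forces the isomorphism exactly at points where $\eta(y)\ne 0$. No nontrivial obstacle is anticipated beyond keeping the interior-product sign conventions straight.
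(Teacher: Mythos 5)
Your proof is correct and follows essentially the same route as the paper: expand $i_X\zw_\zh$ using $\zw_\zh=\xd s\we\zh+s\,\xd\zh$, split off the $\xd s$-component from the $\sT^*_yM$-component, and use $s\ne 0$ to read off the two defining conditions of $\zq(\zh)$, with the vertical vector $\pa_s$ handling the degenerate case $\zh(y)=0$. No gaps.
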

\begin{proof}
The vector $X=Y-a\cdot s\,\pa_s\in\sT_{(y,s)}(M\ti\Rt)$ is a characteristic vector of $\zw_\zh$ if and only if
$$i_X\zw_\zh=s\,(i_Y\xd\zh)(y)-(i_Y\zh)\xd s-a\cdot s\,\zh(y)=0.$$
This is clearly equivalent to $Y\in\ker(\zh)$ and
$$\left(i_Y\xd\zh-a\zh \right)(y)=0,$$
which means, in turn, that $Y$ is a characteristic vector of $\zh$. The number $a\in\R$ is uniquely determined if and only if $\zh(y)\ne 0$, and arbitrary if $\zh(y)=0$, which implies immediately the final statements.

\end{proof}
\no A fundamental observation which connects precontact geometry with the presymplectic one, is the following (cf. \cite{Bruce:2017,Grabowski:2013}).
\begin{theorem}\label{t1}
A nonvanishing 1-form $\zh$ on a manifold $M$ is a precontact form of rank $(2r+1)$ if and only if the closed 2-form (\ref{sf}) on $M\ti\Rt$ is presymplectic of rank $2(r+1)$ (in particular, $\zw_\zh$ is symplectic on $M\ti\Rt$ if and only if $\zh$ is a contact form). In this case, the characteristic distribution $\zq(\zw_\zh)$ of $\zw_\zh$ is transversal to the fibers of the projection $\zt:M\ti\Rt\to M$.
\end{theorem}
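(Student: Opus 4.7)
The plan is to reduce both sides of the claimed equivalence to algebraic conditions on wedge powers of $\xd\zh$ and then compare them via the key formula
$$\zw_\zh^k = ks^{k-1}\,\xd s\we\zh\we(\xd\zh)^{k-1} + s^k(\xd\zh)^k, \qquad k\ge 1,$$
which I would establish by a short induction using $(\xd s)^2=0$ and the fact that $\xd s\we\zh$ and $\xd\zh$ have even degree so they commute in the exterior algebra. The rank-$2(r+1)$ condition on the closed form $\zw_\zh$ means $\zw_\zh^{r+1}\ne 0$ together with $\zw_\zh^{r+2}=0$, and each of these splits into an $\xd s$-part and a part pulled back from $M$, which are linearly independent in $\zW^\bullet(M\ti\Rt)$.

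For the direction ``$\zh$ precontact of rank $(2r+1)$ $\Rightarrow$ $\zw_\zh$ presymplectic of rank $2(r+1)$'', Proposition~\ref{p1}(3) supplies $\zh\we(\xd\zh)^r$ nowhere vanishing and $\zh\we(\xd\zh)^{r+1}=0$. The first condition makes the $\xd s$-term of $\zw_\zh^{r+1}$ nowhere zero, hence $\zw_\zh^{r+1}\ne 0$. For $\zw_\zh^{r+2}$, the $\xd s$-term vanishes by the second condition, while the remaining summand $s^{r+2}(\xd\zh)^{r+2}$ vanishes by the following pointwise linear-algebra fact: for a skew $2$-form $\za$ on a vector space $V$ and a hyperplane $W\subset V$ one has $\rk(\za)\le \rk(\za|_W)+2$; applied to $\za=\xd\zh(y)$ and $W=C_y$ the ambient rank is at most $2(r+1)$, hence $(\xd\zh)^{r+2}=0$.

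For the converse, separating $\xd s$-components in $\zw_\zh^{r+2}=0$ gives $\zh\we(\xd\zh)^{r+1}=0$ at once. To show $\zh\we(\xd\zh)^r$ is nowhere zero I would argue by contradiction: at a point where it vanishes, the rank of $\xd\zh|_{C_y}$ is strictly less than $2r$, the same rank inequality forces $(\xd\zh)^{r+1}=0$ there, and then both summands of $\zw_\zh^{r+1}$ collapse at that point, contradicting the constant rank $2(r+1)$. The transversality statement is then a one-liner: a direct computation gives $i_{\pa_s}\zw_\zh=\zh$, which is nowhere vanishing, so $\pa_s\notin\zq(\zw_\zh)$ and the characteristic distribution meets the fibers of $\zt$ only at zero. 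The only technical point is the rank inequality between a skew $2$-form and its restriction to a codimension-one subspace; once this is available, everything else is bookkeeping with the explicit expression for $\zw_\zh^k$.
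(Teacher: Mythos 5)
Your proposal is correct, and it keeps the paper's overall bookkeeping: the same expansion $\zw_\zh^k=k\,s^{k-1}\xd s\we\zh\we(\xd\zh)^{k-1}+s^k(\xd\zh)^k$ and the same splitting of each power into the $\xd s$-part and the part pulled back from $M$, which are pointwise linearly independent. Where you genuinely diverge from the paper is in the lemma that closes the two delicate steps. The paper obtains $(\xd\zh)^{r+2}=0$ by differentiating the identity $\zh\we(\xd\zh)^{r+1}=0$, and settles the converse at a hypothetical zero of $\zh\we(\xd\zh)^r$ by picking vectors $X_1,\dots,X_{2r+2}$ with $X_1,\dots,X_{2r+1}\in\ker(\zh_y)$ and expanding $(\xd\zh)^{r+1}_y$ against them to reach a contradiction. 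You instead invoke one pointwise linear-algebra fact, $\rk(\za)\le\rk\big(\za\,\big|_W\big)+2$ for a hyperplane $W$, combined with the observation that for $\zh_y\ne 0$ one has $\zh_y\we\zb=0$ iff $\zb\,\big|_{\ker(\zh_y)}=0$; this does double duty. In the forward direction it bounds $\rk(\xd\zh(y))$ by $2(r+1)$ (using Proposition \ref{p1}(2)), giving $(\xd\zh)^{r+2}=0$; in the converse it shows that a zero of $\zh\we(\xd\zh)^r$ forces $\rk\big(\xd\zh\,\big|_{C_y}\big)\le 2r-2$, hence $\rk(\xd\zh(y))\le 2r$, so both summands of $\zw_\zh^{r+1}$ vanish at that point, contradicting that a constant-rank form has nowhere-vanishing top nonzero power. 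This is arguably shorter and cleaner than the paper's contraction argument in the converse, at the modest cost of proving (or quoting) the hyperplane rank inequality, whereas the paper's exterior-derivative identity is slicker for the forward step but useless for the converse. Finally, your transversality one-liner $i_{\pa_s}\zw_\zh=\zt^*(\zh)\ne 0$ is a valid direct substitute for the paper's appeal to Proposition \ref{zq}, since transversality is meant here exactly as triviality of the intersection of $\zq(\zw_\zh)$ with the vertical distribution of $\zt$.
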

\begin{remark} Before proving, let us explain the term `transversal to the fibres' used in the above theorem. What we mean here and in several other places later on is that at each point the intersection of the distribution and the subspace of vectors vertical with respect to the projection $\tau$ is trivial. This notion of transversality does not agree with the traditional one; we have nevertheless decided to use this terminology for the purposes of this paper. We hope it will not lead to any misunderstanding.
\end{remark}
\begin{proof} ($\Rightarrow$) Let us assume that $\zh$ is precontact of rank $(2r+1)$, which means that $\zh\we(\xd\zh)^r\ne 0$, while $\zh\we(\xd\zh)^{r+1}=0$.  The 2-form $\zw_\zh$ is clearly closed. We have
$$(\zw_\zh)^{r+1}=(r+1)s^{r}\xd s\we\zh\we(\xd\zh)^r+s^{r+1}(\xd\zh)^{r+1}.$$
The two summands are linearly independent and $\xd s\we\zh\we(\xd\zh)^r\ne 0$, so $(\zw_\zh)^{r+1}\ne 0$. We have
\be\label{eq1a}(\zw_\zh)^{r+2}=(r+2)s^{r+1}\xd s\we\zh\we(\xd\zh)^{r+1}+s^{r+2}(\xd\zh)^{r+2}=0.\ee
The first summand vanishes, since $\zh\we(\xd\zh)^{r+1}=0$, and the second vanishes, since
$$(\xd\zh)^{r+2}=\xd\left(\zh\we(\xd\zh)^{r+1}\right)=0.$$
Therefore $(\zw_\zh)^{r+2}=0$, thus $(\zw_\zh)$ is of rank $2(r+1)$.

\mn ($\Leftarrow$) Now we assume that $(\zw_\zh)$ is of rank $2(r+1)$, i.e., $(\zw_\zh)^{r+1}\ne 0$ and $(\zw_\zh)^{r+2}=0$. From (\ref{eq1a}) we get that $\zh\we(\xd\zh)^{r+1}=0$, because the summands there are linearly independent.
It remains to show that $(\zw_\zh)^{r+1}\ne 0$ implies now $\zh\we(\xd\zh)^r\ne 0$. This is, of course, true at points in which the rank of $\xd\zh$ is $<2(r+1)$. Suppose that $(\zh\we(\xd\zh)^r)(y)=0$. Hence, the rank of $\xd\zh$ is $2(r+1)$ at $y$. Let us choose $X_1,\dots,X_{2r+2}\in\sT_yM$ such that
$$(\xd\zh)^{r+1}_y(X_1,\dots,X_{2r+2})\ne 0.$$
We can assume additionally that $X_1,\dots,X_{2r+1}\in\ker(\zh_y)$ and $\La\zh_y,X_{2r+2}\Ra=1$ ($\zh$ is nonvanishing). We have
$$0\ne(\xd\zh)^{r+1}_y(X_1,\dots,X_{2r+2})=\sum_{k=1}^{2r+1}(-1)^{k}(r+1)(\xd\zh)^r_y(X_1,\dots,\hat X_k,\dots,X_{{2r}})\xd\zh_y(X_k,X_{2r+2}).$$
Hence, at least one summand must be $\ne 0$, i.e., for some $k$,
$$(\xd\zh)^r_y(X_1,\dots,\hat X_k,\dots,X_{{2r}})\ne 0.$$
But then
$$(\zh\we(\xd\zh)^r)_y\left(X_{2r+2},X_1,\dots,\hat X_k,\dots,X_{2r}\right)=(\xd\zh)^r_y(X_1,\dots,\hat X_k,\dots,X_{{2r}})\ne 0;$$
a contradiction.

\end{proof}
\begin{remark}
Let us note that Theorem \ref{t1} can be seen as a particular case of \cite[Proposition 3.6]{Vitagliano:2018}, where precontact forms are understood as 1-forms with values in a line bundle. This language is better adapted to the work with general Jacobi bundles.
\end{remark}
\begin{corollary}
 Let $C$ be a field of hyperplanes on a manifold $M$. Then $C$ is a precontact structure of rank $(2r+1)$ if and only if the restriction of the canonical symplectic form $\zw_M$ on $\sT^*M$ to $(C^o)^\ti$ is a presymplectic form of rank $2(r+1)$, where $C^o\subset\sT^*M$ is the annihilator of the subbundle $C\subset\sT M$. In particular, $C$ is a contact structure if and only if $(C^o)^\ti$ is a symplectic submanifold of $\sT^*M$.
\end{corollary}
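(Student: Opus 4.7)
The plan is to deduce the corollary directly from Theorem \ref{t1} by using the local trivializations of the line bundle $C^o$ supplied by the defining precontact forms. Both conditions in the statement---that $C$ is a precontact structure of rank $(2r+1)$ and that the restriction of $\zw_M$ to $(C^o)^\ti$ is presymplectic of rank $2(r+1)$---are pointwise smooth conditions, so it suffices to verify the equivalence in a neighbourhood $U \subset M$ of an arbitrary point, on which $C = \ker(\zh)$ for some nowhere-vanishing 1-form $\zh$.

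For such a local $\zh$, the map $I_\zh : U \ti \R \to [\zh] = C^o\big|_U$ from (\ref{zhti}) restricts to a diffeomorphism between $U \ti \Rt$ and $(C^o)^\ti\big|_U$. The computation (\ref{sf}) shows that under this diffeomorphism the pullback of $\zw_M$ is precisely the model 2-form $\zw_\zh = \xd s \we \zh + s\cdot \xd\zh$. Consequently, $\zw_M$ restricted to $(C^o)^\ti\big|_U$ is presymplectic of rank $2(r+1)$ if and only if $\zw_\zh$ has the same property on $U \ti \Rt$.

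Next I would invoke Theorem \ref{t1} to conclude that $\zw_\zh$ is presymplectic of rank $2(r+1)$ precisely when $\zh$ is a precontact form of rank $(2r+1)$, which by Proposition \ref{p1} is in turn equivalent to $C\big|_U$ being a precontact structure of rank $(2r+1)$. Patching these local equivalences together yields the global first claim. For the contact case, note that when $\dim M = 2r+1$ we have $\dim (C^o)^\ti = 2r+2 = 2(r+1)$, so a presymplectic form of that rank on $(C^o)^\ti$ automatically fills the tangent spaces and is therefore symplectic; conversely, symplecticity on $(C^o)^\ti$ forces the maximal rank $2(r+1)$ on the pullback $\zw_\zh$, which in this dimension is exactly the contact condition for $\zh$.

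I do not foresee any real obstacle here, as the argument is essentially a transport of Theorem \ref{t1} through the local diffeomorphism $I_\zh$. The only point deserving a moment of attention is the observation that the rank of a closed 2-form is pointwise and invariant under diffeomorphisms, which ensures that the local equivalences assemble into a global one even though the line bundle $C^o$ need not be trivializable over $M$.
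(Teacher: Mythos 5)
Your argument is correct and is exactly the route the paper intends: the corollary is stated without proof as an immediate consequence of Theorem \ref{t1}, read through the local trivialization $I_\zh$ of $(C^o)^\ti$ and the computation (\ref{sf}) identifying $\zw_M\big|_{(C^o)^\ti}$ locally with $\zw_\zh$, with the contact case handled by the same dimension count. Nothing essential differs, and your remark that the rank conditions are pointwise and diffeomorphism-invariant is precisely what makes the local-to-global patching legitimate despite $C^o$ being possibly non-trivializable.
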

\no Theorem \ref{t1} shows that with any precontact structure $(M,C)$ of rank $(2r+1)$ there is canonically associated the principal $\Rt$-bundle $P=(C^o)^\ti$, with the $\Rt$-action $s\mapsto h_s$ inherited from the vector bundle structure of $\sT^*M$, $h_s(\zh_y)=s\cdot\zh_y$, and the obvious projection $\zt:P\to M$ inherited from the projection $\zp_M:\sT^*M\to M$. Moreover, $P$ is equipped with a presymplectic form $\zw=\zw_M\,\big|_P$ of rank $2(r+1)$ inherited from the canonical symplectic form $\zw_M$ on $\sT^*M$. Since the canonical symplectic structure $\zw_M$ on $\sT^*M$ is linear, the presymplectic form $\zw$ is 1-homogeneous with respect to the $\Rt$-action, $h^*_s(\zw)=s\cdot\zw$. The characteristic distribution of $\zw$ is transversal to the fibers of $\zt_M:(C^o)^\ti\to M$. An abstract counterpart of such a structure is therefore the following (cf. \cite{Bruce:2016,Grabowski:2013}).
\begin{definition}  A \emph{presymplectic $\Rt$-bundle of rank $(2r+1)$}  is an $\Rt$-bundle $\zt:P\to M$ with respect to an $\Rt$-action
$$h:\Rt\ti P\to P\,,\quad \Rt\ti P\ni(s,x)\mapsto h_s(x)\in P\,,$$
equipped additionally with a presymplectic form $\zw$ of rank $2(r+1)$ which is 1-homogeneous, $(h_s)^*(\zw)=s\cdot\zw$, and whose characteristic distribution $\zq(\zw)$ is transversal to the fibers of $\zt$. Such a structure we will denote $(P,\zt,M,h,\zw)$. With every such a presymplectic $\Rt$-bundle there are canonically associated: a nonvanishing $\Rt$-invariant (0-homogeneous) vector field $\n$, being the opposite $\n=-\wt 1$ of the fundamental vector field $\wt 1$ of the $\Rt$-action,
$$\n(x)=\frac{\xd}{\xd t}\,\Big|_{t=0}(h_{e^t}(x))=\frac{\xd}{\xd t}\,\Big|_{t=1}(h_{t}(x)),$$
and a nonvanishing 1-form $\zvy=i_{\n}\zw$. If $\zw$ is symplectic, then we speak about a \emph{symplectic $\Rt$-bundle}.

\mn An \emph{isomorphism of presymplectic $\Rt$-bundles} $(P_i,\zt_i,M_i,h^i,\zw_i)$, $i=1,2$, is an isomorphism
\be\label{cms}
\xymatrix@C+10pt{
 P_1 \ar[r]^{\wt{\zf}}\ar[d]_{\zt_1} & P_2\ar[d]^{\zt_2} \\
M_1 \ar[r]^{\zf} & M_2}
\ee
of $\Rt$-bundles such that $\wt{\zf}^*(\zw_2)=\zw_1$. A vector field $X$ on a presymplectic $\Rt$-bundle we call an \emph{$\Rt$-presymplectic vector field} if it generates a flow of local automorphisms of the presymplectic $\Rt$-bundle
\end{definition}
\begin{proposition}\label{zvy}
Let $(P,\zt,M,h,\zw)$ be a presymplectic $\Rt$-bundle. Then
\begin{description}
\item{(1)} the 1-form $\zvy$ is a unique 1-homogeneous nonvanishing and semi-basic 1-form such that $\xd\zvy=\zw$ (in particular, $\zw$ is always exact);
\item{(2)} the characteristic distribution $\zq(\zw)\subset\sT P$ of $\zw$ is involutive, $\Rt$-invariant, and induces an $\Rt$-invariant foliation $\cF_\zw$ on $P$, with leaves transversal to the fibers of $\zt$;
\item{(3)} any isomorphism (\ref{cms}) respects $\n_i$ and $\zvy_i$, $i=1,2$,
$$\wt\zf_*(\n_1)=\n_2,\quad\text{and}\quad \wt\zf^*(\zvy_2)=\zvy_1;$$
\item{(4)} a vector field $X$ on $P$ is $\Rt$-presymplectic if and only if $X$ is $\Rt$-invariant (homogeneous of degree 0) and $\Ll_X\zvy=0$.
\end{description}
\end{proposition}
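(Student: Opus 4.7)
The four parts are all powered by a single observation: since $\n$ generates the flow $h_{e^t}$, a $k$-homogeneous tensor field $T$ on $P$ satisfies $\Ll_\n T = k\,T$. In particular $\Ll_\n \zw = \zw$, and I will repeatedly combine this with Cartan's magic formula $\Ll_X = i_X\xd + \xd\,i_X$ and with the naturality identity~(\ref{gf}). The transversality clause in the definition of a presymplectic $\Rt$-bundle, together with freeness of the $\Rt$-action, will enter only to exclude degenerate behaviour at individual points.

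For (1), I will first verify that $\zvy = i_\n\zw$ has all the claimed properties. Closedness of $\zw$ combined with $\Ll_\n \zw = \zw$ gives $\xd\zvy = \zw$ via Cartan, which also shows that $\zw$ is exact. Semi-basicity is automatic because the vertical distribution of a principal $\Rt$-bundle is spanned by $\n$, while $i_\n\zvy = i_\n i_\n \zw = 0$. For $1$-homogeneity I apply~(\ref{gf}) with $\zF = h_s$ and the $\Rt$-invariance of $\n$, obtaining $h_s^*\zvy = s\,\zvy$. Nonvanishing follows because $\zvy_x = 0$ would put $\n(x)$ simultaneously into the vertical bundle and into the characteristic distribution $\zq(\zw)$, contradicting either transversality or freeness. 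For uniqueness, any $1$-homogeneous, semi-basic, closed form $\za$ satisfies $\za = \Ll_\n \za = i_\n\xd\za + \xd(i_\n\za) = 0$, so $\zvy' - \zvy$ must vanish whenever $\zvy'$ has the same three properties.

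Parts (2) and (3) are then essentially definition-chasing. Involutivity of $\zq(\zw)$ is the standard computation $i_{[X,Y]}\zw = \Ll_X i_Y\zw - i_Y \Ll_X\zw$ combined with $\xd\zw = 0$; the $\Rt$-invariance of $\zq(\zw)$ follows because $h_s^*\zw = s\zw$ and multiplication by a nonzero scalar preserves kernels; transversality is part of the hypothesis, and Frobenius then produces the foliation $\cF_\zw$. For (3), the commutativity of the diagram~(\ref{cms}) with the $\Rt$-actions means $\wt\zf$ is $\Rt$-equivariant, so $\wt\zf$ intertwines the fundamental vector fields and $\wt\zf_*\n_1 = \n_2$; then~(\ref{gf}) together with $\wt\zf^*\zw_2 = \zw_1$ yields $\wt\zf^*\zvy_2 = \zvy_1$ at once.

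Part (4) will follow in both directions from the single identity $\Ll_X\zvy = i_{[X,\n]}\zw + i_\n\Ll_X\zw$. If $X$ is $\Rt$-presymplectic, then $X$ is $\Rt$-invariant (so $[X,\n] = 0$) and satisfies $\Ll_X\zw = 0$, whence $\Ll_X\zvy = 0$. Conversely, $\Rt$-invariance of $X$ together with $\Ll_X\zvy = 0$ forces $\Ll_X\zw = \xd\,\Ll_X\zvy = 0$ through $\zw = \xd\zvy$, so the flow of $X$ consists of $\Rt$-equivariant $\zw$-preserving diffeomorphisms, i.e.\ automorphisms of the presymplectic $\Rt$-bundle. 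The only genuinely non-computational step in the whole proof is the uniqueness half of~(1), where Cartan's formula collapses three a priori independent conditions (closed, semi-basic, $1$-homogeneous) into the single statement that the form must vanish.
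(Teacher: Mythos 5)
Your proof is correct and follows essentially the same route as the paper: Cartan's formula with the Euler vector field $\n$ gives $\xd\zvy=\zw$ and the uniqueness of a $1$-homogeneous semi-basic potential, the naturality identity (\ref{gf}) handles homogeneity and part (3), and the flow characterization settles (4). The only cosmetic difference is that in the forward direction of (4) you use the identity $\Ll_X i_{\n}\zw=i_{[X,\n]}\zw+i_{\n}\Ll_X\zw$ where the paper instead applies part (3) to the automorphisms $\zf_t$ of the flow; both are equivalent, and your explicit treatment of the nonvanishing of $\zvy$ via transversality is a welcome detail the paper leaves implicit.
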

\begin{proof}
(1) \ Since $\n$ is vertical, $\zvy$ is semi-basic. But $\n$ is homogeneous of degree 0 ($\Rt$-invariant) and $\zw$ is homogeneous of degree 1, therefore $\zvy$ is 1-homogeneous. Consequently, $\xd(i_{\n}\zw)=\Ll_{\n}\zw=\zw$. If $\zvy_1$ is another 1-homogeneous and semi-basic potential for $\zw$, then $\xd(\zvy-\zvy_1)=0$. But $\zvy-\zvy_1$ is 1-homogeneous and semi-basic, so
$$\zvy-\zvy_1=\Ll_{\n}(\zvy-\zvy_1)=\xd(i_{\n}(\zvy-\zvy_1))+i_{\n}\xd(\zvy-\zvy_1)=0.$$
The first summand on the right-hand side is 0, because $\zvy-\zvy_1$ is semi-basic.

\mn (2) \ The characteristic distribution is clearly of constant rank and involutive, so it induces an $\Rt$-invariant foliation $\cF_\zw$ whose leaves are maximal integral submanifolds of $\zq(\zw)$.
The characteristic distribution $\zq(\zw)$ is transversal to the fibers of $\zt$ by definition, so are the leaves of the foliation $\cF_\zw$.

\mn (3) \ If now $\wt\zf:P_1\to P_2$ is an isomorphism of presymplectic $\Rt$-bundles, it respects the $\Rt$-actions, so $\wt\zf_*(\n_1)=\n_2$. Since $\wt\zf^*(\zw_2)=\zw_1$, from (\ref{gf}) we get $\wt\zf^*(\zvy_2)=\zvy_1$.

\mn (4) \ Finally, a vector field $X$ on $P$ is $\Rt$-presymplectic if it is $\Rt$-invariant and the flow $\zf_t$ of $X$ consists of (local) automorphisms of $(P,\zt,M,h,\zw)$. This means that
$h_s\circ\zf_t=\zf_t\circ h_s$ and, as we have just shown, $\zf_t^*(\zvy)=\zvy$.
It is a standard task to prove that $h_s\circ\zf_t=\zf_t\circ h_s$ is equivalent to $\sT h_s(X(x))=X(h_s(x))$, thus invariance of $X$, and $\zf_t^*(\zvy)=\zvy$ is equivalent to $\Ll_X(\zvy)=0$.

\end{proof}
\noindent The 1-form $\zvy$ we will call the \emph{Liouville 1-form} for the presymplectic $\Rt$-bundle, and the vector field $\n$ - the \emph{Euler vector field}. For the presymplectic $\Rt$-bundle $(C^o)^\ti$ the Liouville 1-form $\zvy$ is the restriction to $(C^o)^\ti$ of the canonical Liouville 1-form $\zvy_M$ on $\sT^*M$.
\no For simplicity, we will usually identify pull-backs of differential forms on $M$ by $\zt$  with the forms themselves, i.e., we write $\zt^*(\zh)=\zh$.
\begin{proposition}\label{pr1} For a {presymplectic $\Rt$-bundle} $(P,\zt,M,h,\zw)$ of rank $2(r+1)$ and an open submanifold $U\subset M$ there is a canonical one-to-one correspondence between trivializations of the principal bundle $P$ over $U$ and precontact forms $\zh$ of rank $(2r+1)$ on $U$.
The correspondence between $\zh$ on $U$ and $\zvy$ on $U\ti\Rt$ is given by $\zvy(y,s)=s\cdot\zh(y)$, so that
$$\zw=\xd s\we\zh+s\cdot\xd\zh.$$
Hence,
$$C=\ker(\zh)=\sT\zt(\ker(\zvy))$$
is a precontact structure on $U$.
\end{proposition}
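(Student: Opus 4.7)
My plan is to establish the correspondence by computing the pullback $\Phi^*\zvy$ for an arbitrary trivialization $\Phi$ of $P|_U$, and then parametrizing all trivializations to obtain bijectivity.

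Given a trivialization $\Phi: U \times \Rt \to P|_U$, I would first observe that $\Phi^*\zvy$ is 1-homogeneous and semi-basic on $U \times \Rt$: this follows because $\Phi$ is $\Rt$-equivariant and sends vertical directions to vertical directions, while $\zvy$ is 1-homogeneous and semi-basic by Proposition \ref{zvy}(1). A 1-homogeneous semi-basic 1-form on $U \times \Rt$ must have the form $s \cdot \zh(y)$ for a unique 1-form $\zh$ on $U$; explicitly, $\zh = \sigma^*\zvy$, where $\sigma(y) = \Phi(y,1)$ is the corresponding section. Since $\zvy$ is nonvanishing and $\Phi$ is a diffeomorphism, $\zh$ is nonvanishing. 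Then $\Phi^*\zw = \xd \Phi^*\zvy = \xd s \wedge \zh + s \cdot \xd\zh = \zw_\zh$, and this form has the same rank as $\zw$, namely $2(r+1)$, because $\Phi$ is a diffeomorphism. By Theorem \ref{t1}, $\zh$ is thus a precontact form of rank $(2r+1)$ on $U$.

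For bijectivity, I would fix any trivialization $\Phi_0$ (giving $\zh_0$ as above) and use that every other trivialization of the $\Rt$-bundle $P|_U$ has the form $\Phi_f(y,s) = \Phi_0(y, f(y) s)$ for a unique nonvanishing function $f: U \to \Rt$. A direct calculation yields $\Phi_f^*\zvy = s \cdot (f \zh_0)(y)$, so that $\Phi_f$ corresponds to the form $f \zh_0$. As $f$ varies over nonvanishing functions on $U$, $f \zh_0$ ranges over all 1-forms sharing the kernel $C = \ker \zh_0$, each of which is precontact of rank $(2r+1)$ since it lies in the conformal class of $\zh_0$. The final identifications $\zw = \xd s \wedge \zh + s \cdot \xd\zh$ and $C = \ker \zh = \sT\zt(\ker \zvy)$ are then immediate from the construction, together with the semi-basic and $\Rt$-invariant character of $\ker \zvy$ (which ensures its image under $\sT\zt$ is a well-defined hyperplane field on $M$, independent of the point in each fibre).

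The argument is essentially verification and carries no serious obstacle; the subtle points requiring attention are the semi-basic/homogeneity decomposition identifying $\Phi^*\zvy$ with $s \cdot \zh$, and the recognition that the correspondence naturally restricts to precontact forms whose kernel matches the bundle's canonical hyperplane field $C = \sT\zt(\ker \zvy)$.
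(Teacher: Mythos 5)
Your forward direction coincides with the paper's: in a trivialization, 1-homogeneity and semi-basicity force $\zvy=s\cdot\zh(y)$ (the paper phrases this as $i_{\pa_s}\zw=\zvy/s$ being $\Rt$-invariant and semi-basic, hence a pull-back from $U$), so $\zw=\xd s\we\zh+s\cdot\xd\zh$, and Theorem \ref{t1} gives that $\zh$ is precontact of rank $(2r+1)$; the conformal-invariance of the rank needed for your last step is Proposition \ref{p1}. Where you genuinely diverge is the bijectivity. You fix a reference trivialization $\Phi_0$, parametrize all others by gauge functions $f:U\to\Rt$, and compute $\Phi_f^*\zvy=s\cdot(f\zh_0)(y)$, so the correspondence sweeps out exactly the conformal class of $\zh_0$, i.e.\ the precontact forms with kernel $C=\sT\zt(\ker\zvy)$. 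The paper instead proves the converse directly: given a precontact form $\zh$ on $U$ with $\ker(\zt^*\zh)=\ker(\zvy)$, it writes $\zvy=F\cdot\zt^*(\zh)$ with $F$ nonvanishing and 1-homogeneous and checks that $\Psi(x)=(\zt(x),F(x))$ is a trivialization of $P$ over $U$. The two routes are equivalent in substance (the paper's $F$ is your $f\cdot s$ read in the reference trivialization), but the paper's version buys something yours does not: it shows that the mere existence of a precontact form inducing $C$ on $U$ already \emph{produces} a trivialization. Your surjectivity argument presupposes that some $\Phi_0$ exists; if $P$ over $U$ were nontrivial, the stated correspondence would additionally require that no compatible precontact form exists on $U$, and that is precisely what the construction of $\Psi$ (or, equivalently, the remark that such an $\zh$ trivializes the line bundle $C^o$ over $U$) delivers. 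This is a small but real omission if the proposition is read as a two-sided correspondence for an arbitrary open $U$; apart from it, your argument (nonvanishing of $\zh$, the rank computation, and the well-definedness of $C$ from the $\Rt$-invariant, semi-basic kernel of $\zvy$) is correct.
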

\begin{proof}
Since the $\Rt$-action on $U\ti\Rt$ is $h_{s'}(y,s)=(y,s'\cdot s)$, we get $\n=s\,\pa_s$ and $\zvy=s\cdot i_{\pa_s}\zw$. Since $\zvy$ is homogeneous of degree 1, the 1-form $i_{\pa_s}\zw=\zvy/s$ is $\Rt$-invariant and nowhere vanishing, and hence it is a pull-back $\zt^*(\zh)$ of a local nowhere-vanishing 1-form $\zh$ on $U$. We will write simply $\zh$ for $\zt^*(\zh)$ on $U\ti\Rt$. Consequently,
$$\zw=\xd\zvy=\xd(s\cdot\zh)=\xd s\we\zh+s\cdot\xd\zh.$$
Since $\zw$ is presymplectic of rank $2(r+1)$, from Theorem \ref{t1} we deduce that $\zh$ is a precontact form of rank $2r+1$. Of course, $\ker(s\zh)=\ker(\zh)$ for $s\ne 0$. Conversely, if $\zh$ is a precontact form on $U$ such that $\ker(\zt^*(\zh))=\ker(\zvy)$, then $\zvy(x)=F(x)\zh(\zt(x))$ for some nonvanishing function $F$ on $\zt^{-1}(U)$. But $\zvy$ is 1-homogeneous, while $\zh$ is 0-homogeneous, so $F$ is 1-homogeneous. We will show that the map $$\Psi:\zt^{-1}(U)\to U\ti\Rt,\quad \Psi(x)=(\zt(x),F(x))$$
is a local trivialization of the principal bundle $P$. Indeed,
$$\Psi(h_s(x))=\big(\zt(h_s(x)),F(h_s(x))\big)=(\zt(x),s\cdot F(x)).$$
In this trivialization $F(x)=s$, so $\zvy=s\cdot\zh$ and $C=\ker(\zh)$ is a precontact structure on $U$.

\end{proof}
\no It is well known (cf. \cite[Theorem 2.1]{Grabowska:2022}) that for every principal $\Rt$-bundle (or a line bundle) $\zt:P\to M$ we can always find an atlas of local trivializations
$$\zf_\za:\zt^{-1}(U_\za)\to U_\za\ti\R\,,$$
with the transition functions
$$\zf_{\za\zb}:(U_\za\cap U_\zb)\ti\Rt\to (U_\za\cap U_\zb)\ti\Rt\,,\quad
\zf_{\za\zb}(x,s)=(x,\pm s)\,.$$
This is equivalent to the fact that we can always find a 2-sheet cover $p:\tilde M\to M$ such that
the pull-back principal bundle $p^*P$ is trivializable. An analogous fact is true for line bundles.
We get, therefore, from Proposition \ref{p1} the following corollary, which is very useful when dealing with precontact structures.
\begin{corollary}
For every precontact manifold $(M,C)$ we can find an open covering  $(U_\za)$ of $M$ and local precontact forms $\zh_\za$ inducing $C$ on $U_\za$, such that $\zh_\za=\pm \zh_\zb$ on $U_{\za\zb}=U_\za\cap U_\zb$. In other words, there exists a 2-sheet cover $p:\tilde M\to M$ such that the pull-back precontact structure $p^*C$ on $\tilde M$ admits a global precontact form.
\end{corollary}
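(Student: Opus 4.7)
The plan is to combine two ingredients already available: the structural fact from \cite[Theorem 2.1]{Grabowska:2022} about atlases on $\Rt$-bundles with sign transition functions, together with the local correspondence established in Proposition \ref{pr1}. First I would attach to the precontact manifold $(M,C)$ its canonical presymplectic $\Rt$-bundle $\zt:P=(C^o)^\ti\to M$, equipped with the restriction of the canonical symplectic form on $\sT^*M$. This structure exists and is presymplectic of rank $2(r+1)$ by the corollary following Theorem \ref{t1}.

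Next, I would apply the cited result recalled just before the corollary: the $\Rt$-bundle $\zt:P\to M$ admits an atlas of local trivializations $\zf_\za:\zt^{-1}(U_\za)\to U_\za\ti\Rt$ whose transition functions have the form $\zf_{\za\zb}(x,s)=(x,\pm s)$. By Proposition \ref{pr1}, each such trivialization corresponds bijectively to a precontact form $\zh_\za$ of rank $(2r+1)$ on $U_\za$ inducing $C$, via the identity $\zvy(y,s)=s\cdot\zh_\za(y)$ for the Liouville form $\zvy$ of $P$.

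To get the sign-consistency of the forms, I would simply read the relation $\zvy(y,s)=s\cdot\zh_\za(y)=s'\cdot\zh_\zb(y)$ between the two trivializations on $U_{\za\zb}$: since the two fibre coordinates are related by $s'=\pm s$, it follows immediately that $\zh_\za=\pm\zh_\zb$ on $U_{\za\zb}$, which is exactly the local statement we want.

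Finally, for the second assertion I would invoke the equivalent formulation from \cite[Theorem 2.1]{Grabowska:2022}: the existence of such an atlas is the same as the existence of a 2-sheet cover $p:\tilde M\to M$ with $p^*P$ trivializable. A global trivialization of $p^*P$, which is the presymplectic $\Rt$-bundle associated with the pull-back precontact structure $p^*C$ on $\tilde M$, corresponds by Proposition \ref{pr1} to a global precontact form on $\tilde M$ inducing $p^*C$. I do not expect any serious obstacle; the only point that needs a careful check is that pulling back the $\Rt$-bundle $P$ along $p$ indeed yields the presymplectic $\Rt$-bundle canonically associated with $(\tilde M,p^*C)$, but this is routine because the construction $C\mapsto (C^o)^\ti$ is functorial under local diffeomorphisms such as the covering projection $p$.
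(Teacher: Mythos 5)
Your argument is correct and follows essentially the same route as the paper: the corollary is obtained by applying the $\pm$-transition atlas result recalled from \cite{Grabowska:2022} to the bundle attached to $C$ and translating trivializations into local precontact forms, with the sign relation $\zh_\za=\pm\zh_\zb$ and the 2-sheet cover statement read off exactly as you do. The paper phrases the translation slightly more directly (a trivialization of the line bundle $C^o$ is just a nonvanishing local section, i.e.\ a local 1-form with kernel $C$, which is a precontact form of rank $(2r+1)$ by Proposition \ref{p1}), whereas you pass through the Liouville form of the canonical presymplectic cover and Proposition \ref{pr1}; this is the same mechanism and your steps, including the pull-back along the double cover, check out.
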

\no We know already that with every precontact structure $(M,C)$ there is a canonically associated presymplectic $\Rt$-bundle $(C^o)^\ti\subset\sT^*M$, with the projection $\zt=\zp_M$, the $\Rt$-action $h$, and the presymplectic form $\zw$ inherited from the cotangent bundle $\zp_M:\sT^*M\to M$. This presymplectic $\Rt$-bundle we will call the \emph{canonical presymplectic cover} of $(M,C)$. For contact structures, we refer to \emph{canonical symplectic covers}. A fundamental result is the converse of this observation (cf. \cite{Grabowski:2013}).
\begin{theorem}\label{FT}
Any {presymplectic $\Rt$-bundle} $(P,\zt,M,h,\zw)$ of rank $2(r+1)$ induces canonically a precontact structure $C$ of rank $(2r+1)$ on $M$, together with an isomorphism
$$
\xymatrix@C+10pt{
 P \ar[r]^{\zF_P}\ar[d]_{\zt} & (C^o)^\ti\ar[d]^{\zp_M} \\
M \ar[r]^{\id_M} & M}
$$
of presymplectic $\Rt$-bundles. This precontact structure is given by $C=\sT\zt(\ker(\zvy))$, where $\zvy$ is the Liouville 1-form on $P$. In other words, there is a one-to-one correspondence between precontact manifolds $(M,C)$ of rank $(2r+1)$ and isomorphism classes of presymplectic $\Rt$-bundles of rank $2(r+1)$ over $M$. As a principal $\Rt$-bundle, $P$ can be therefore identified with $\big[\left(L^C\right)^*\big]^\ti$, where $L^C=\sT M/C$.
Moreover, the projection $\zt:P\to M$ maps locally diffeomorphically the leaves of the characteristic foliation $\cF_\zw$ onto the leaves of the characteristic foliation $\cF_C$. In other words, the projection $\zt$ is on leaves of $\cF_\zw$ a differentiable covering of the leaves of $\cF_C$.
\end{theorem}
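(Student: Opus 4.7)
My plan is to work locally using Proposition \ref{pr1}, which already provides the local model, and then globalize via a canonical map $\zF_P:P\to(C^o)^\ti$ defined by viewing the semi-basic Liouville 1-form $\zvy$ as a covector on $M$.

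First I would define the hyperplane field $C\de\sT\zt(\ker\zvy)$ on $M$, as suggested by the statement. To see it is a precontact structure of rank $(2r+1)$, I would cover $M$ by open sets $U_\za$ over which $P$ is trivializable and apply Proposition \ref{pr1}: in the induced trivialization $\zt^{-1}(U_\za)\simeq U_\za\ti\Rt$ the Liouville form reads $\zvy=s\cdot\zh_\za$ for a precontact form $\zh_\za$ of rank $(2r+1)$ on $U_\za$. Then $\sT\zt(\ker\zvy)\big|_{U_\za}=\ker\zh_\za$, so the definition of $C$ is consistent across overlaps, and its restriction to each $U_\za$ is a precontact structure of rank $(2r+1)$.

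Next I would build the isomorphism $\zF_P:P\to(C^o)^\ti$. Since $\zvy$ is semi-basic and nowhere vanishing by Proposition \ref{zvy}, for each $x\in P$ there is a unique nonzero covector $\xi_x\in\sT^*_{\zt(x)}M$ with $\zvy(x)=(\sT_x\zt)^*(\xi_x)$; moreover $\xi_x$ vanishes on $C_{\zt(x)}=\sT\zt(\ker\zvy(x))$, so $\xi_x\in(C^o)^\ti$. Setting $\zF_P(x)\de\xi_x$, in the local trivialization above one has the explicit formula $\zF_P(y,s)=s\cdot\zh_\za(y)$, from which smoothness and bijectivity onto $(C^o)^\ti$ are transparent. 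The relations $(h_s)^*\zvy=s\cdot\zvy$ and $\zvy=\zF_P^*(\zvy_M)$ (both by construction) yield $\Rt$-equivariance of $\zF_P$ together with $\zF_P^*(\zw_M)=\xd\zF_P^*(\zvy_M)=\xd\zvy=\zw$, so $\zF_P$ is an isomorphism of presymplectic $\Rt$-bundles. The canonical identification of $C^o$ with $(L^C)^*=(\sT M/C)^*$ as the dual of the quotient then gives $P\simeq\big[(L^C)^*\big]^\ti$, and the mutual inverseness of the two assignments $(M,C)\leftrightarrow(P,\zw)$ upgrades this to the claimed bijection of isomorphism classes.

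For the final statement on characteristic foliations I would apply Proposition \ref{zq} in the local trivialization: at each $x\in P$, the transversality of $\zq(\zw)$ to the $\zt$-fibers (equivalently, $\zh_\za(\zt(x))\ne 0$) forces $\sT_x\zt$ to restrict to a linear isomorphism from $\zq(\zw)(x)$ onto $\zq(\zh_\za)(\zt(x))=\zq(C)(\zt(x))$. Consequently $\zt$ restricted to any leaf of $\cF_\zw$ is a local diffeomorphism into $M$ whose image lies in a leaf of $\cF_C$, which is the desired covering property. I expect the main obstacle to be verifying smoothness and well-definedness of the global $\zF_P$ across trivializations, but this reduces to the $\pm 1$ transition functions guaranteed by the corollary preceding the theorem, so it is essentially automatic.
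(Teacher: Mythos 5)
Your construction of $C=\sT\zt(\ker\zvy)$, of the map $\zF_P$ from the semi-basic nowhere-vanishing $\zvy$, and the verification that $\zF_P$ is an equivariant isomorphism onto $(C^o)^\ti$ pulling back the canonical structure, is essentially the paper's own argument (the paper even avoids your overlap-consistency check by observing that $C=\sT\zt(\ker\zvy)$ is already an intrinsic, trivialization-free definition). One small gloss: $\zvy=\zF_P^*(\zvy_M)$ is not literally ``by construction''; it needs the one-line computation $\La\zF_P^*(\zvy_M)(x),X\Ra=\La\zF_P(x),\sT\zp_M(\sT\zF_P(X))\Ra=\La\zF_P(x),\sT\zt(X)\Ra=\La\zvy(x),X\Ra$, which uses $\zp_M\circ\zF_P=\zt$ and the coordinate-free definition of $\zvy_M$; this is harmless but should be said.

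There is, however, a genuine gap in your last step. From Proposition \ref{zq} (equivalently, the local picture in Theorem \ref{t1}) you correctly conclude that $\sT_x\zt$ maps $\zq(\zw)(x)$ isomorphically onto $\zq(C)(\zt(x))$, hence that $\zt$ restricted to a leaf $F$ of $\cF_\zw$ is a local diffeomorphism whose image lies in a leaf $F_0$ of $\cF_C$. But the theorem asserts more: that $\zt(F)$ is the \emph{whole} leaf $F_0$ and that $\zt\vert_F:F\to F_0$ is a covering. A local diffeomorphism into a leaf need not be onto it, so ``which is the desired covering property'' does not follow from what you proved. The paper closes this by a connectedness argument: $\zt(F)$ is open in $F_0$ (by the local diffeomorphism property) and one checks it is also closed in $F_0$, so by connectedness of the leaf $\zt(F)=F_0$; only then can one speak of a covering. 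You need to supply at least this open-and-closed argument (or an equivalent one, e.g. viewing $\cF_\zw$ restricted to $\zt^{-1}(F_0)$ as a horizontal foliation of the $\Rt$-bundle over $F_0$, whose leaves are coverings of the base) to obtain the surjectivity and covering claims.
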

\begin{proof}
Since in Proposition \ref{pr1} the local contact structure was defined as $C=\sT\zt(\ker(\zvy))$, so without any reference to the local precontact form $\zh$, this definition gives a global contact structure on $M$. Moreover, we infer from this proposition that, for every $x\in P$, we have $\zvy(x)=\zt^*(\zF_P(x))$, where $\zF_P(x)\in\sT^*_{\zt(x)}M$.
In particular, $\zp_M\circ\zF_P=\zt$, where $\zp_M:\sT^*M\to M$ is the canonical projection. The map $\zF_P$ takes values in $(C^o)^\ti$. Indeed, since any $X_0\in C_{\zt(x)}$ is of the form $\sT\zt(X)$ for some $X\in\ker(\zvy(x))$, we have
$$\La\zF_P(x),X_0\Ra=\La\zF_P(x),\sT\zt(X)\Ra=\La\zt^*(\zF_P(x)),X\Ra
=\La\zvy(x),X\Ra=0.$$
Moreover, $\zF_P(x)\ne 0$, since $\zvy(x)\ne 0$, and so $\zF_P:P\to(C^o)^\ti$ is a morphism of principal $\Rt$-bundles: as $\zvy$ is 1-homogeneous, we have $\zF_P(h_s(x))=s\cdot\zF_P(x)$. This morphism induces the identity on the base, so it is an isomorphism.
In other words, $\zF_P:P\to(C^o)^\ti$ is a diffeomorphism such that $\zF_P\circ h_s=s\cdot\zF_P$, where $\zt_M\circ\zF_P=\zt$. Of course, the annihilator $C^o\subset\sT^*M$ can be canonically identified with the dual line bundle of $L^C=\sT M/C$.

\mn It remains to show that $\zF^*(\zw_M)=\zw$. We will just show that $\zF^*(\zvy_M)=\zvy$, where we denote for simplicity $\zF=\zF_P$ and $\zvy_M$ for the canonical Liouville 1-form on $\sT^*M$. Let $X\in\sT_xP$. We have
\beas
& \La\zF^*\big(\zvy_M(\zF(x))\big),X\Ra=\La\zvy_M(\zF(x)),\sT\zF(X)\Ra=
\La\zF(x),\sT\zp_M(\sT\zF(x))\Ra \\
&=\La\zF(x),\sT\zt(X)\Ra=\La\zt^*(\zF(x)),X\Ra=\La\zvy(x),X\Ra.
\eeas
We used the coordinate-free definition of $\zvy_M$: for any $p\in\sT^\ast M$ and any vector field $Y$ on $\sT^\ast M$ we have
$$\La\zvy_M(p),Y(p)\Ra=\La p,\sT\zp_M\big(Y(p)\big)\Ra.$$

\mn We already know from the local picture (Theorem \ref{t1}) that the fibers of $\zq(\zw)$ are mapped by $\sT\zt$ isomorphically onto the fibers of $\zq(C)$. This immediately implies that the leaves of $\cF_\zw$ are mapped by $\zt$ locally diffeomorphically into the leaves of $\cF_C$. The image of every leaf of $\cF_\zw$ is, therefore, open in the corresponding leaf of $\cF_C$. It is a student exercise to show that it is also closed, so it is the whole leaf.

\end{proof}

\no Any presymplectic $\Rt$-bundle $\zt:P\to M$ inducing a given precontact structure $C$ on $M$ we will call a \emph{presymplectic cover} of $(M,C)$.
The reader could ask why we do not just stay with the canonical presymplectic cover $(C^o)^\ti$. The point is that in many cases we deal with a structure which is clearly a presymplectic $\Rt$-bundle, while the corresponding contact structure $C$, thus $(C^o)^\ti$, is not explicitly given. This is an easy way to define precontact structures.
\begin{example}
For a manifold $M$, the cotangent bundle  $\sT^*M$ with the zero section removed, i.e., $(\sT^*M)^\ti$ is clearly an $\Rt$-bundle with respect to the multiplication by reals in $\sT^*M$. The canonical symplectic form $\zw_M$ restricted to $(\sT^*M)^\ti$ is still symplectic and 1-homogeneous, so we deal with a symplectic $\Rt$-bundle. According to Theorem \ref{FT}, this defines a canonical contact structure on the projectivized cotangent bundle
$\Pe\sT^*M=(\sT^*M)^\ti/\Rt$. In textbooks, one usually uses much more space to define this contact structure. Our approach, however, can be found for the holomorphic case in \cite[Example 55]{Vitagliano:2016}.
\end{example}
\begin{proposition}\label{cc}
Let $(P_i,\zt_i,M_i,h^i,\zw_i)$ be a presymplectic cover of a precontact structure $(M_i,C_i)$, $i=1,2$. If (\ref{cms}) is an isomorphism of presymplectic $\Rt$-bundles, then $\zf$ is a contactomorphism of the corresponding precontact structures on $M_1$ and $M_2$. Conversely, any contactomorphism $\zf:M_1\to M_2$ is covered by a unique isomorphism
$$
\xymatrix@C+10pt{
 P_1 \ar[r]^{\wt{\zf}}\ar[d]_{\zt_1} & P_2\ar[d]^{\zt_2} \\
M_1 \ar[r]^{\zf} & M_2}
$$
of presymplectic $\Rt$-bundles.
\end{proposition}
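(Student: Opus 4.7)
The forward direction is immediate from the structural results already established. If $\wt\zf:P_1\to P_2$ is an isomorphism of presymplectic $\Rt$-bundles covering $\zf$, then Proposition~\ref{zvy}(3) gives $\wt\zf^*(\zvy_2)=\zvy_1$, so $\sT\wt\zf$ carries $\ker(\zvy_1)$ bijectively onto $\ker(\zvy_2)$. Combining this with the intertwining $\zt_2\circ\wt\zf=\zf\circ\zt_1$ and the identity $C_i=\sT\zt_i(\ker(\zvy_i))$ from Theorem~\ref{FT} yields $\sT\zf(C_1)=C_2$, so $\zf$ is a contactomorphism.

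For the converse I would first build the lift in the canonical cover and then transport it to arbitrary ones. Given a contactomorphism $\zf:M_1\to M_2$, its cotangent lift
\[
\wh\zf:\sT^*M_1\to\sT^*M_2,\qquad \wh\zf(\za)=\za\circ\sT_{\zf(y)}\zf^{-1}\ \text{ for }\ \za\in\sT_y^*M_1,
\]
is $\Rt$-equivariant with respect to the scalar action on fibres and satisfies $\wh\zf^*(\zvy_{M_2})=\zvy_{M_1}$, hence $\wh\zf^*(\zw_{M_2})=\zw_{M_1}$. Because $\sT\zf(C_1)=C_2$, the map $\wh\zf$ restricts to a diffeomorphism $(C_1^o)^\ti\to(C_2^o)^\ti$, producing an isomorphism of canonical presymplectic covers. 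For arbitrary presymplectic covers I set
\[
\wt\zf=\zF_{P_2}^{-1}\circ\wh\zf\,|_{(C_1^o)^\ti}\circ\zF_{P_1},
\]
using the canonical identifications $\zF_{P_i}:P_i\to(C_i^o)^\ti$ from Theorem~\ref{FT}, which are themselves isomorphisms of presymplectic $\Rt$-bundles; equivariance and the pullback of $\zw$ transfer automatically.

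The step requiring care is uniqueness, and this is the part I expect to be the main obstacle. Suppose $\wt\zf_1,\wt\zf_2:P_1\to P_2$ are two presymplectic $\Rt$-bundle isomorphisms both covering $\zf$. Then $\psi:=\wt\zf_2^{-1}\circ\wt\zf_1$ is an $\Rt$-bundle automorphism of $P_1$ over $\id_{M_1}$, and since $\wt\zf_i^*(\zw_2)=\zw_1$ the uniqueness clause in Proposition~\ref{zvy}(1) forces $\wt\zf_i^*(\zvy_2)=\zvy_1$, whence $\psi^*(\zvy_1)=\zvy_1$. An $\Rt$-equivariant self-map of $P_1$ over $\id_{M_1}$ has the local form $\psi(y,s)=(y,f(y)\cdot s)$ for a smooth $f:U\to\Rt$; a short direct computation in the local trivialization $\zvy_1=s\cdot\zh$ provided by Proposition~\ref{pr1} gives $\psi^*(\zvy_1)=f\cdot\zvy_1$. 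Hence $f\equiv 1$ on each trivializing patch and $\psi=\id_{P_1}$, which globalizes immediately. The delicacy lies precisely in ruling out this fibrewise rescaling ambiguity, but the $1$-homogeneity of $\zvy_1$ pins it down completely.
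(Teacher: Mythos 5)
Your proof is correct and follows essentially the same route as the paper: the forward direction via preservation of the Liouville forms and the identity $C_i=\sT\zt_i(\ker(\zvy_i))$, the construction of the lift as the cotangent lift $(\sT\zf^{-1})^*$ restricted to $(C_1^o)^\ti$, and uniqueness by showing that a vertical $\Rt$-bundle automorphism is a fibrewise rescaling $h_{f\circ\zt}$ which cannot preserve the $1$-homogeneous Liouville form unless $f\equiv 1$. Your only additions — the explicit transport to arbitrary covers via the identifications $\zF_{P_i}$ of Theorem~\ref{FT}, and deducing $\wt\zf_i^*(\zvy_2)=\zvy_1$ from the uniqueness clause of Proposition~\ref{zvy}(1) rather than directly from part (3) — are harmless refinements of the same argument.
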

\begin{proof} According to Proposition \ref{zvy}, we have $\wt\zf^*(\zvy_2)=\zvy_1$, hence $\sT\wt\zf$ maps $\ker(\zvy_1(x))$ onto $\ker(\zvy_2(\wt\zf(x))$. But $\sT\zt_i$ projects the kernels of $\zvy_i(x_i)$ onto $C_i(\zt_i(x_i))$, thus $\sT\zf(C_1(y))=C_2(\zf(y))$, that proves that $\zf$ is a contactomorphism.

Conversely, for the canonical presymplectic covers $P_i=(C_i^o)^\ti\subset\sT^*M_i$, $i=1,2$, we can obtain $\wt{\zf}$ as $(\sT\zf^{-1})^*:\sT^*M_1\to\sT^*M_2$ restricted to $(C_1^o)^\ti$ ($\sT\zf$ respects the precontact structures, so $(\sT\zf^{-1})^*$ respects their annihilators). It remains to show that the isomorphism $\wt\zf$ of presymplectic $\Rt$-bundles covering the contactomorphism $\zf$ is unique. This is equivalent to proving that vertical (i.e., covering the identity) automorphisms $\wt\zf$ of any presymplectic $\Rt$-bundle $(P,\zt,M,h,\zw)$ are trivial. Indeed, since $\wt\zf$ projects onto identity, it acts trivially on pull-backs $\zt^*(\zh)$ of forms $\zh$ on $M$.
Actually, since $\wt\zf$ is a vertical automorphism of the corresponding $\Rt$-bundle, it must be of the form $\wt\zf(x)=h_{f(\zt(x))}$ for a nowhere-vanishing function $f$ on $M$. But if $f\ne 1$, then $\wt\zf$ cannot preserve $\zw$. This is because if $\wt\zf^*(\zw)=\zw$, then one can prove as above that it also preserves the Liouville 1-form $\zvy=i_{\n}\zw$.
But $\zvy$ is semi-basic and 1-homogeneous, so locally, in coordinates $(s,y)$ associated with a local trivialization, it reads $\zvy(s,y)=s\cdot\zh(y)$ for a (pull-back of) 1-form $\zh$ on $M$
(we know already that it is a local precontact form for the precontact structure). Then, $$s\cdot\zh(y)=\wt\zf^*(s\cdot\zh(y))=(s\circ\wt\zf)\cdot\zh(y)=(f(y)s)\cdot\zh(y),$$
so $f(y)=1$.
\end{proof}
\begin{corollary}\label{cH}
Let $(P,\zt,M,h,\zw)$ be a presymplectic cover of a precontact structure $(M,C)$. Then any action $\zr:G\ti M\to M$ of a Lie group $G$ on $M$ by contactomorphisms can be lifted to a unique action $\wt\zr:G\ti P\to P$ on the presymplectic $\Rt$-bundle $P$ by its automorphisms such that $(\wt\zr)_g=\wt{(\zr_g)}$. Conversely, any $G$-action on the presymplectic cover $P$ by automorphisms projects to a $G$-action on $M$ by contactomorphisms. It follows that there is a canonical one-to-one correspondence between contact vector fields $X^c$ on $M$ and $\Rt$-presymplectic vector fields $X$ on $P$, given by $X^c=\zt_*(X)$.
\end{corollary}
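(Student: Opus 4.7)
The plan is to bootstrap from Proposition \ref{cc}, which already does the heavy lifting at the level of single morphisms, and then check that the lift assembles into a smooth group action. Most of the work is organizational rather than computational.

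First, for each $g\in G$, Proposition \ref{cc} produces a unique automorphism $\wt{\zr_g}$ of the presymplectic $\Rt$-bundle $P$ covering the contactomorphism $\zr_g:M\to M$. I set $\wt\zr(g,x):=\wt{\zr_g}(x)$. The composition identity $\wt{\zr_{gh}}=\wt{\zr_g}\circ\wt{\zr_h}$ is immediate from the uniqueness clause in Proposition \ref{cc}: both sides are automorphisms of $P$ covering $\zr_{gh}=\zr_g\circ\zr_h$, so they must agree. Likewise $\wt{\zr_e}=\id_P$ by uniqueness. Hence, once smoothness is established, $\wt\zr$ is a genuine left $G$-action by automorphisms.

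For smoothness I first do the case of the canonical presymplectic cover $P=(C^o)^\ti\subset\sT^\ast M$. There the formula $\wt{\zr_g}=\big(\sT\zr_{g^{-1}}\big)^\ast\big|_{(C^o)^\ti}$ given in the proof of Proposition \ref{cc} depends smoothly on $(g,x)\in G\ti P$ because $(g,y)\mapsto\zr_g(y)$ is smooth and cotangent-lift preserves smoothness. For an arbitrary presymplectic cover $P$, Theorem \ref{FT} provides a canonical isomorphism $\zF_P:P\to(C^o)^\ti$ of presymplectic $\Rt$-bundles; conjugating by $\zF_P$ transfers the smooth action on $(C^o)^\ti$ to a smooth action on $P$, and by uniqueness this coincides with the pointwise lift constructed above.

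The converse direction is immediate: if $G$ acts on $P$ by automorphisms of the presymplectic $\Rt$-bundle, then each $\wt\zr_g$ covers (by Proposition \ref{cc}) a contactomorphism $\zr_g$ of $M$, and $g\mapsto \zr_g$ is a group homomorphism because the projection $\zt$ is equivariant; smoothness descends because $\zt$ is a surjective submersion. Finally, for the vector-field correspondence I apply the group statement to one-parameter groups. A contact vector field $X^c$ on $M$ has a (possibly local) flow by contactomorphisms; lifting each flow map by Proposition \ref{cc} and invoking uniqueness shows the lifts form a one-parameter group of automorphisms of $P$, whose infinitesimal generator $X$ is $\Rt$-presymplectic by Proposition \ref{zvy}(4). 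Differentiating the intertwining relation $\zt\circ\wt{\zf_t}=\zf_t\circ\zt$ at $t=0$ gives $\sT\zt\circ X=X^c\circ\zt$, i.e., $\zt_\ast(X)=X^c$. The inverse assignment projects the flow of an $\Rt$-presymplectic $X$ down to $M$, and by the first part of Proposition \ref{cc} the projected flow consists of contactomorphisms, giving a contact vector field $X^c$.

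The only genuinely delicate point is smoothness in the group parameter, and that is handled cleanly by reducing to the canonical cover $(C^o)^\ti$, where the lift is the ordinary cotangent lift of $\zr$.
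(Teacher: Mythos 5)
Your proposal is correct and follows essentially the route the paper intends: the corollary is stated without a separate proof precisely because it is meant to follow from Proposition \ref{cc} (uniqueness of the lift of each $\zr_g$, which forces the group law) together with the reduction to the canonical cover $(C^o)^\ti$ via Theorem \ref{FT}, exactly as you spell out. Your explicit treatment of smoothness in the group parameter and of the flow argument for the vector-field correspondence via Proposition \ref{zvy}(4) fills in details the paper leaves implicit, but introduces no new idea or deviation.
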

\subsection{Remarks on Poisson $\Rt$-bundles}
Note that the concept of a symplectic $\Rt$-bundle has an obvious generalization, namely a \emph{Poisson $\Rt$-bundle}. The Poisson tensor on $P$ should be $(-1)$-homogeneous (the symplectic Poisson tensor $\zL=\zw^{-1}$ is homogeneous of degree $(-1)$ if $\zw$ is 1-homogeneous). Of course, if the Poisson tensor is invertible, we deal  with a symplectic $\Rt$-bundle, i.e., with a contact structure. The concept of a Poisson $\Rt$-bundle coincides with the concept of Kirillov's local Lie algebra \cite{Kirillov:1976}, or \emph{Jacobi bundles} in the terminology of Marle \cite{Marle:1991}, which provides a proper understanding of Jacobi brackets as local brackets on sections of line bundles. In other words, Jacobi brackets are closely related to homogeneous Poisson brackets, that is pretty well known in the literature (see e.g, \cite{Bruce:2017,Dazord:1991,Grabowski:2003,Grabowski:2004,Guedira:1984,Le:2017,Le:2018,Marle:1991}). In non-singular cases, the Lie algebra of Jacobi brackets completely determines the manifold $M$ \cite{Grabowski:2007}. More information about Jacobi brackets and their generalizations, as well as various Lie brackets on manifolds, can be found in \cite{Cosmo:2025,Grabowski:2003a,Grabowski:2013a,Vitagliano:2018}. We will not go deeper into this subject in this paper.

\section{Precontact-to-contact reduction}
Let now $C$ be a precontact structure of rank $(2r+1)$ on a manifold $M$ of dimension $m$, and let
$\zq(C)=\ker(\zn^C)$ be the characteristic distribution of $C$. Since $\zq(C)$ is regular and involutive, by Frobenius Theorem we conclude that it induces a foliation $\cF_C$ of $M$ by maximal integral submanifolds of $\zq(C)$ which we will call the \emph{characteristic foliation of $C$}. The dimension of leaves of this foliation is the rank of $\zq(C)$, which is $(m-2r-1)$. The assumption that the characteristic foliation is simple means that there is a smooth manifold structure on the space $M_0=M/\cF_C$ of leaves such that the canonical projection $p_0:M\to M_0$,
associating to points of a leaf $F$ this leaf in the space of leaves, is a surjective submersion, i.e., $p_0$ is a smooth fibration. Of course, the dimension of $M_0$ is $(2r+1)$.
\begin{theorem}\label{cr}
If the characteristic foliation $\cF_C$ is simple, then $M_0$ carries a canonical contact structure $C_0$ such that $C_0(p_0(y))=\sT p_0(C(y))$. Moreover, if $\zh$ is a precontact form defined in a neighbourhood of $y\in M$ and inducing $C$, then $p_0^*(\zh_0)=f_{\zh}\zh$ for any contact form $\zh_0$ defined in a neighbourhood of $p_0(y)$ and inducing $C_0$, where $f_{\zh}$ is a nonvanishing function.

\end{theorem}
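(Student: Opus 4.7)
\medskip\noindent In the spirit of this paper, the natural strategy is to lift the reduction to the level of presymplectic covers, where it becomes the standard presymplectic-to-symplectic reduction, and then push the result down to the base.

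Let $(P,\zt,M,h,\zw)$ be a presymplectic cover of $(M,C)$ as in Theorem \ref{FT}. By Proposition \ref{zvy}(2) the characteristic distribution $\zq(\zw)$ is involutive, $\Rt$-invariant, and transversal to the fibres of $\zt$, and the last part of Theorem \ref{FT} tells us that $\zt$ maps the leaves of $\cF_\zw$ locally diffeomorphically onto the leaves of $\cF_C$. The first step is to argue from this that simplicity of $\cF_C$ propagates upward to simplicity of $\cF_\zw$, so that the leaf space $P_0=P/\cF_\zw$ acquires a smooth manifold structure of dimension $2(r+1)$ for which the quotient $q_0:P\to P_0$ is a surjective submersion.

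Next I descend the full presymplectic $\Rt$-bundle structure to $P_0$. The 2-form $\zw$ is $\cF_\zw$-basic, so it descends to a unique closed $\zw_0$ on $P_0$ with $q_0^*\zw_0=\zw$; a rank count forces $\zw_0$ to be symplectic. The Euler vector field $\n$ preserves $\zq(\zw)$ (scaling a 2-form preserves its kernel), so it descends to a vector field $\n_0$, and the $\Rt$-action on $P$ permutes the leaves of $\cF_\zw$, giving an $\Rt$-action $h^0$ on $P_0$ under which $\zw_0$ is 1-homogeneous. The result is a symplectic $\Rt$-bundle $(P_0,\zt_0,M_0,h^0,\zw_0)$, and by Theorem \ref{FT} it endows $M_0$ with a canonical contact structure $C_0$.

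The identification $C_0(p_0(y))=\sT p_0(C(y))$ is then a diagram chase based on the descent relation $q_0^*\zvy_0=\zvy$ between Liouville forms (which follows from descent of $\n$ and $\zw$): any $X_0\in C_0(p_0(y))$ lifts through $\sT q_0$ to a vector in $\ker\zvy$ whose $\sT\zt$-image lies in $C(y)$ and projects to $X_0$ under $\sT p_0$, and reversing this argument supplies the opposite inclusion. The relation $p_0^*(\zh_0)=f_\zh\,\zh$ is then immediate: $p_0^*\zh_0$ is nowhere vanishing near $y$ since $p_0$ is a submersion and $\zh_0$ is nowhere vanishing, its kernel contains $C=\ker(\zh)$ by the compatibility just established, and both kernels have codimension one, so they agree and $p_0^*\zh_0=f_\zh\,\zh$ for a nonvanishing function $f_\zh$.

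The genuine technical point I expect is the propagation of simplicity from $\cF_C$ to $\cF_\zw$: although $\zt$ is a local diffeomorphism on leaves, one must verify globally that $P_0$ is Hausdorff and that $q_0$ is a smooth submersion, for which a careful use of local trivializations of the $\Rt$-bundle $\zt:P\to M$ together with local sections of $p_0$ should suffice. Everything else amounts to routine bookkeeping of basic and homogeneous tensors under a submersion.
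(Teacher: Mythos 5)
Your strategy inverts the paper's logical order, and the two steps you defer are exactly where the substance lies. The paper proves this theorem by a short direct argument on $M$: the flows of vector fields $X\in\zq(C)$ preserve $C$ (since $\zn^C(X,Y)=0$ forces $[X,Y]\in C$ for $Y\in C$), which makes $\sT p_0(C(y))$ independent of the point $y$ in a leaf, and then nondegeneracy of $\zn^{C_0}$ follows because $p_0$ kills exactly the kernel of $\zn^C$. Your proposal instead wants to first perform the reduction upstairs on $(P,\zw)$ and then descend via Theorem \ref{FT}. But the statement you label as the ``genuine technical point'' and dismiss with ``should suffice'' --- that simplicity of $\cF_C$ propagates to simplicity of $\cF_\zw$ --- is precisely the nontrivial forward implication of Theorem \ref{main}, and the paper proves it \emph{by using} Theorem \ref{cr}: one first builds $(M_0,C_0)$, takes its canonical symplectic cover, pulls it back along $p_0$, and identifies the result with $P$ by uniqueness of presymplectic covers. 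Local trivializations plus local sections of $p_0$ do not obviously settle it, because $\zt$ restricted to a leaf of $\cF_\zw$ is only a covering of the corresponding leaf of $\cF_C$: the distribution $\zq(\zw)$ defines a flat principal $\Rt$-connection on $\zt^{-1}(F)$ over each leaf $F$, and without an argument excluding nontrivial holonomy (for instance a dense holonomy subgroup of $\Rt$), the leaves of $\cF_\zw$ need not be closed and $P/\cF_\zw$ need not be Hausdorff. You give no such argument; in the paper the triviality of this holonomy is a \emph{consequence} of the projectability of $C$, i.e.\ of the very theorem you are trying to prove, so filling the gap by appealing to Theorem \ref{main} would be circular.

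There is a second gap of the same kind: even granting that $P_0=P/\cF_\zw$ is a manifold carrying $\zw_0$, $\n_0$ and an induced $\Rt$-action, calling it a ``symplectic $\Rt$-bundle'' (so that Theorem \ref{FT} applies and $P_0/\Rt$ can be identified with $M_0$) requires the induced $\Rt$-action to be free and proper, i.e.\ principal. This is not bookkeeping: in the proof of Theorem \ref{main} freeness is extracted from the $1$-homogeneity of $\zw_0$ by a pointwise coordinate computation, and properness is proved via Borel's compactness criterion. Your proposal asserts the conclusion without addressing either point. So while the descent part of your argument (basic forms, $q_0^*\zvy_0=\zvy$, the diagram chase giving $C_0(p_0(y))=\sT p_0(C(y))$, and the final conformal-factor statement) is fine, the proof as written has a genuine hole at its foundation, and the paper's own route --- proving the base-level statement first and only afterwards lifting it to the covers --- is both shorter and the one that actually makes your first step true.
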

\begin{proof}
Let us observe first that $C_0(p_0(y))=\sT p_0(C(y))$ is a correct definition of a field of hyperplanes $C_0$ on $M_0$. Since the leaves are connected by definition, it is enough to prove that this definition is correct in a neighbourhood of any $y\in M$, i.e.,
\be\label{p}\sT p_0(C(y'))=\sT p_0(C(y))\quad\text{if}\quad p_0(y')=p_0(y).\ee
Let us fix $y$. As $p_0:M\to M_0$ is a fibration, it is locally trivial in a neighbourhood of $y$, so there are local vector fields $X_1,\dots,X_m$ generating $\sT M$ in a neighbourhood of $y$, which are projectable onto vector fields $Y_1,\dots,Y_m$ on $M_0$. In other words, $Y_i$ and $X_i$ are $p_0$-related, $(p_0)_*(X_i)=Y_i$. We can also assume that $X_1,\dots,X_{m-1}$ span $C$ and $X_{2r+1},\dots,X_{m-1}$ span $\zq(C)$, i.e. tangent spaces of the leaves of $\cF_C$. Then $Y_{2r+1}=\cdots=Y_{m-1}=0$ and $Y_1,\dots,Y_{2r},Y_m$ span $\sT M_0$. Let us show first that the local flow $\zf^i_t$ of each $X_i$, $i=2r+1,\dots,m-1$, preserves $C$. Let $Y$ be a local vector field taking values in $C$. Since
$$\zr^C([X_i,Y])=\zn^C(X_i,Y)=0,\quad\text{for}\quad i=2r+1,\dots,m-1,$$
we get that $[X_i,Y]$ again takes values in $C$, thus the local flow $\zf^i_t$ of each $X_i$ preserves $C$ for each $i=2r+1,\dots,m-1$, i.e., $\sT\zf^i_t(C(y))=C(\zf^i_t(y))$. Hence, if $y'=\zf^i_t(y)$, then
$$\sT p_0(C(y'))=\sT p_0\circ\sT\zf^i_t(y)(C(y))=\sT(p_0\circ\zf^i_t)(C(y))=\sT p_0(C(y))\quad\text{for}\quad i=2r+1,\dots,m-1,$$
since clearly $p_0\circ\zf^i_t=p_0$ for such $i$.
As $X_{2r+1},\dots,X_{m-1}$ span the involutive distribution $\zq(C)$, compositions of local diffeomorphisms from their flows act locally transitively on the leaves of $\cF_C$, which shows
(\ref{p}) for $y'$ in a neighbourhood of $y$ in the leaf containing $y$.

Now, it is easy to see that $C_0$ is a contact structure, since the map $p_0$ kills the kernel of $\zn^C$. More precisely, the surjective morphism of vector bundles $\sT p_0:\sT M\to \sT M_0$ induces
canonically a surjective morphism $p^C:L^C\to L^{C_0}$ by
$$p^C\left(\zr^C(X_y)\right)=\zr^{C_0}\left(\sT p_0(X_y)\right).$$
We have
$$\zn^{C_0}(Y_i,Y_j)=\zr^{C_0}\big([Y_i,Y_j]\big)=\zr^{C_0}\big([(p_0)_*(X_i),(p_0)_*(X_j)]\big)
=\zr^{C_0}\big((p_0)_*([X_i,X_j]\big)=\zr^C\big([X_i,X_j]\big),$$
where $i,j=1,\dots,2r$. But $\zn^C$ is nondegenerate on the subbundle in $C$ spanned by $X_1,\dots,X_{2r}$, so the matrix $\Big(\zr^C\big([X_i,X_j]\big)\Big)$ is non-degenerate, that proves that $\zn^{C_0}$ is non-degenerate.
Finally, it follows from (\ref{gf}) that $\ker(p_0^*(\zh_0))=C$, so $p_0^*(\zh_0)=f\zh$ for a nonvanishing function $f$ (of course, $f$ depends on the choice of $\zh$).

\end{proof}
\begin{definition} The contact structure $C_0$ on $M_0$ we call the \emph{reduced contact structure} and the whole procedure - the \emph{precontact-to-contact reduction}.
\end{definition}
\begin{remark}
Note that the local 1-form $p_0^*(\zh_0)$ on $M$ generating $C$ is automatically of class $(2r+1)$ (cf. Theorem \ref{Dt}). Moreover, Theorem \ref{cr} is very similar to contact coisotropic reductions proposed in \cite[Theorem 14]{deLeon:2019} and {\cite[Proposition 4.2]{Tortorella:2018}}. Analogous reductions for general Jacobi structures are considered in \cite[Section 7.2]{Vitagliano:2018}. However, the situation in this case is much more complicated.
\end{remark}
\no A natural question is now, what is the counterpart of precontact-to-contact reductions on the level of presymplectic covers.
\subsection{Symplectic reduction of presymplectic $\Rt$-bundles}
The well-known symplectic reduction of presymplectic manifolds (or constant rank submanifolds in symplectic manifolds) requires some additional caution in the case of presymplectic $\Rt$-bundles, in order to assure that the resulting symplectic manifold has a compatible $\Rt$-bundle structure.
\begin{theorem}\label{main}
Let $(P,\zt,M,h,\zw)$ be a presymplectic cover of a contact manifold $(M,C)$ of rank $2r+1$. Then the characteristic foliation $\cF_\zw$ is simple if and only if the characteristic foliation $\cF_C$ is simple.

\mn In this case, the manifold $P_0=P/\cF_\zw$ of $\cF_\zw$-leaves carries a canonical structure of a symplectic $\Rt$-bundle $(P_0,\zt_0,M_0,h^0,\zw_0)$ such that
the canonical submersion $p:P\to P_0$ is a submersive morphism of $\Rt$-bundles, i.e., $h^0_s\circ p=p\circ h_s$, and $\zw=p^*(\zw_0)$.

\mn In particular, we have the following commutative diagram for a surjective morphism of $\Rt$-bundles:
\be\label{cms0}
\xymatrix@C+50pt{
P \ar[r]^{p}\ar[d]_{\zt} & P_0\ar[d]^{\zt_0} \\
M \ar[r]^{p_0} & M_0.}
\ee
Moreover, $M_0=M/\cF_C$ and $M_0$ carries a canonical contact structure $C_0$ such that $C_0=\sT p_0(C)$, where $p_0$ is the canonical submersion onto the manifold of leaves of $\cF_C$.
\end{theorem}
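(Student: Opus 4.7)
The plan is to perform the standard presymplectic reduction on $(P,\zw)$, verify that the resulting symplectic quotient inherits a compatible $\Rt$-bundle structure over $M_0$, and identify the induced contact structure on $M_0$ with the one from Theorem \ref{cr}. The key structural observation is that $\zq(\zw)$ is $\Rt$-invariant (since $\zw$ is 1-homogeneous) and transversal to the vertical direction $\R\cdot\nabla$, so $\tilde\zq:=\zq(\zw)\oplus\R\cdot\nabla$ is an involutive distribution on $P$ whose leaves are, by Theorem \ref{FT} combined with the transversality of $\zq(\zw)$ to the fibers of $\zt$, precisely the $\zt$-saturated sets $\zt^{-1}(F)$ as $F$ ranges over the leaves of $\cF_C$.

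First I would prove the equivalence of simplicity. If $\cF_C$ is simple with smooth quotient $p_0:M\to M_0$, then $p_0\circ\zt:P\to M_0$ is a smooth submersion whose fibers are exactly the leaves of $\tilde\cF$. Inside each $\tilde\cF$-leaf the foliation $\cF_\zw$ has codimension one, and its leaves are permuted freely and transitively by the one-parameter group generated by $\nabla$ (freeness because $\nabla\notin\zq(\zw)$, transitivity by dimension). Hence $P_0:=P/\cF_\zw$ is set-theoretically an $\Rt$-bundle over $M_0$, and smoothness as well as Hausdorffness of $P_0$ transfer from those of $M_0$ via local trivializations of $\zt$ coming from precontact forms (Proposition \ref{pr1}). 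The converse direction is symmetric: if $\cF_\zw$ is simple, then $h$ descends to a free proper $\Rt$-action on $P_0$, whose quotient is smoothly identified with $M/\cF_C$ via $p_0\circ\zt$.

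The symplectic form $\zw_0$ on $P_0$ then arises from the standard presymplectic reduction: since $\zq(\zw)=\ker\zw$ is tangent to the fibers of $p:P\to P_0$, the form $\zw$ descends to a unique closed 2-form $\zw_0$ with $p^\ast\zw_0=\zw$, and $\zw_0$ is symplectic by construction. Equivariance of $p$ together with 1-homogeneity of $\zw$ gives $p^\ast(h^0_s)^\ast\zw_0=(h_s)^\ast\zw=s\cdot\zw=p^\ast(s\cdot\zw_0)$, hence $(h^0_s)^\ast\zw_0=s\cdot\zw_0$; the action $h^0$ is free and proper since $\cF_\zw$-leaves are transversal to $\Rt$-orbits in $P$. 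Thus $(P_0,\zt_0,M_0,h^0,\zw_0)$ is a symplectic $\Rt$-bundle, and by Theorem \ref{FT} it induces on $M_0$ a contact structure which coincides with $C_0=\sT p_0(C)$ of Theorem \ref{cr}, as can be seen by comparing kernels of the Liouville 1-forms through the diagram (\ref{cms0}). The main obstacle is the equivalence of simplicity, particularly the Hausdorffness of $P_0$ when $\cF_C$ is simple, since the interaction between $\cF_\zw$ and the $\Rt$-action within each $\tilde\cF$-leaf must be controlled globally; once this is settled, the remaining steps are routine invocations of presymplectic reduction combined with homogeneity.
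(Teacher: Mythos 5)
Your overall strategy (direct presymplectic reduction of $(P,\zw)$ plus a check that the $\Rt$-structure descends) is legitimate in outline and genuinely different from the paper's, but the step on which everything hinges is asserted rather than proved, and the justification you give for it is invalid. The claim is that the flow of $\n$ permutes the $\cF_\zw$-leaves inside each leaf of $\tilde\zq=\zq(\zw)\oplus\R\cdot\n$ \emph{freely}; equivalently, that no leaf $L$ of $\cF_\zw$ satisfies $h_{s_0}(L)=L$ with $s_0\ne 1$, i.e.\ that $\zt\big|_L$ is injective. Your reason, ``$\n\notin\zq(\zw)$'', only says that the flow lines of $\n$ are transversal to the leaves; transversality of a flow to a codimension-one foliation does not exclude a finite-time return $h_{s_0}(L)=L$ (the induced action on the leaf space can have nontrivial discrete stabilizers, as in suspension-type foliations). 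This freeness is exactly what makes the fibres of $P_0\to M_0$ copies of $\Rt$, and your Hausdorffness/smoothness ``transfer'' secretly uses it as well: a local transversal of the form $\zt^{-1}(\zS)$ meets every leaf at most once only if $\zt\big|_L$ is injective. You flag this as ``the main obstacle'' but never close it. The same problem recurs in your converse direction, where you declare $h^0$ free and proper ``since the leaves are transversal to the $\Rt$-orbits'': in the paper, freeness there requires a separate argument exploiting the $1$-homogeneity of $\zw_0$ (a leaf meeting a $\zt$-fibre twice would force $(h^0_{s_0})^*\zw_0=\zw_0$ along an $\Rt$-orbit, contradicting $(h^0_{s_0})^*\zw_0=s_0\cdot\zw_0$), and properness is proved separately via Borel's criterion with a compactness argument; neither follows from transversality.

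For the direction ``$\cF_C$ simple $\Rightarrow\cF_\zw$ simple'' the paper avoids analysing the leaf dynamics on $P$ altogether: it takes the canonical symplectic cover $(C_0^o)^\ti$ of the reduced contact manifold $(M_0,C_0)$ supplied by Theorem \ref{cr}, pulls it back along $p_0$ to a presymplectic $\Rt$-bundle $P_1$ over $M$ whose characteristic foliation is simple by construction (its leaves are the fibres of $P_1\to(C_0^o)^\ti$), and then invokes the uniqueness of presymplectic covers (Theorem \ref{FT}) to conclude $(P,\zw)\simeq(P_1,\zw_1)$; freeness, Hausdorffness and the $\Rt$-bundle structure of the leaf space are then read off from this model. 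If you want to keep your direct route, you must supply a global input of this kind, e.g.\ show that over a leaf $F=p_0^{-1}(m_0)$ the images of the sections $y\mapsto(p_0^*\zx)(y)$ of $(C^o)^\ti$, with $\zx$ ranging over $(C_0^o)^\ti_{m_0}$, are integral manifolds of $\zq(\zw)$ filling out $\zt^{-1}(F)$; this yields injectivity of $\zt\big|_L$, but it again requires Theorem \ref{cr} and the compatibility of $\zvy$ with the Liouville form of $\sT^*M_0$, i.e.\ precisely the structure your sketch does not use. (A further small point: $\zt^{-1}(F)$ may be disconnected, so the leaves of $\tilde\zq$ are its connected components and the $\n$-flow can at best be transitive within a component; the full $\Rt$-action is needed to relate the two components.)
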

\begin{proof}
Suppose $\cF_C$ is simple. According to Theorem \ref{cr}, the manifold $M_0=M/\cF_C$ of $\cF_C$-leaves carries a canonical contact structure $C_0$ such that $C_0=\sT p_0(C)$, where
$p_0:M\to M_0$ is the canonical submersion. Consider the canonical symplectic cover $(P_0,\zt_0,M_0,h^0,\zw_0)$ of the contact manifold $(M_0,C_0)$, the Liouville form $\zvy_0$ on $P_0$, and the pull-back bundle $P_1=p_0^*(P_0)$, with the commutative diagram
$$
\xymatrix@C+50pt{
P_1 \ar[r]^{p_1}\ar[d]_{\zt_1} & P_0\ar[d]^{\zt_0} \\
M \ar[r]^{p_0} & M_0.}
$$
It is easy to see that $P_1$ carries a canonical structure of a principal $\Rt$-bundle with the $\Rt$-action $h^1$ inherited from $P_0$, and the above diagram describes a submersive morphism of principal $\Rt$-bundles.
The 1-form $\zvy_1=p^*_1(\zvy_0)$ is clearly nonvanishing, 1-homogeneous, and $\zw_1=\xd\zvy_1=p_1^*(\zw_0)$. Since $p_1$ is a submersion, $\zw_1$ is a closed form of rank $2(r+1)$ and its characteristic distribution $\zq(\zw_1)$ is mapped by $\sT p_1$ onto the trivial distribution, so $\zq(\zw_1)$ is transversal to the fibers of $\zt_1$. Moreover, since $p_0\circ\zt_1=\zt_0\circ p_1$ and $\sT p_1(\ker(\zvy_1))=\ker(\zvy_0)$, we have
$$\sT p_0\big(\sT\zt_1(\ker(\zvy_1))\big)=\sT\zt_0\big(\sT p_1(\ker(\zvy_1))\big)
=\sT\zt_0(\ker(\zvy_0))=C_0.$$
Hence $\sT\zt_1(\ker(\zvy_1))=C$ and we have just proved that $(P_1,\zt_1,M,h^1,\zw_1)$
is a presymplectic cover of $(M,C)$. But all presymplectic covers are isomorphic, so $(P,\zw)\simeq (P_1,\zw_1)$, thus the characteristic foliation $\cF_\zw$ is simple, since $\cF_{\zw_1}$ is simple by definition.

\mn Conversely, let us assume that $\cF_\zw$ is simple. For a leaf $F$ of $\cF_\zw$, denote with $[F]$ the corresponding point of $P_0$ and with $\zw_0$ the reduced symplectic form on $P_0$. In other words, $F=p^{-1}([F])$, where $p:P\to P_0$ is the canonical surjective submersion onto the manifold of leaves (thus a fibration). From symplectic geometry, we know that $P_0$ is equipped with a unique symplectic form $\zw_0$ such that $p^*(\zw_0)=\zw$. Since the $\Rt$-action $h$ on $P$ maps leaves onto leaves, it induces a smooth $\Rt$-action $h^0$ on $P_0$ by $h^0_s([F])=[h_s(F)]$, i.e., $h^0_s\circ p=p\circ h_s$, and the fibers of $\zt$ are projected by $p$ onto the orbits of $h^0$. Since the leaves of $\cF_\zw$ are transversal to the fibers of $\zt$, the submersion $p$ restricted to any fiber of $\zt$ is a local diffeomorphism.  Moreover, since $h_s^*(\zw)=s\cdot\zw$, we have $(h^0_s)^*(\zw_0)=s\cdot\zw_0$, so $\zw_0$ is 1-homogeneous. This easily implies that every leaf $F$ intersects the fibers at no more than one point, which implies that the action $h^0$ is free.

\mn Indeed, if in a fiber $F$ of $\cF_\zw$ there are two different points of the fiber $\zt^{-1}(y_0)$, say $x_0$ and $x_1=h_{s_0}(x_0)$, $s_0\ne 1$, then $v=p(x_0)=p(x_1)\in P_0$, and $h^0_{s_0}(v)=v$. Since $h_{s_0}(h_s(x_0))=h_s(x_1)$, we have $h^0_{s_0}(h^0_s(v))=h^0_s(v)$, so $h^0_{s_0}$ is the identity on the whole $\Rt$-orbit of $v$.

\mn Let us consider a local trivialization $U\ti\Rt$ of $P$ over a neighbourhood $U$ of $y_0\in M$, which is equipped with coordinates $(z^i,u^j)$ such that $y_0=(0,0)$, and
the foliation $\cF_C$ is defined locally by the system of equations $z^i=const$.
This system of coordinates on $U$ gives a system of coordinates $(z^i,u^j,s)$ in $\wt U=\zt^{-1}(U)=U\ti\Rt$ such that $s\in\Rt$ and $h_{s'}(z^i,u^j,s)=(z^i,u^j,s's)$. There is $a\in\Rt$ such that $x_0=(0,0,a)$. These coordinates, reduced to a sufficiently small neighbourhood of $x_0$, induce local coordinates $(z^i,s)$ in a neighbourhood of $v$ in $P_0$ such that $v=(0,a)$. Note that $z^i$ are invariant with respect to the $\Rt$-action on $P_0$. We denote $z^i$ and $z^i\circ p$ with the same symbol, hoping that it is clear from the context on which manifold we are. On the $\Rt$-orbit of $v$ the diffeomorphism $h^0_{s_0}$ is the identity, so $h^0_{s_0}(z^i,s)=(z^i,f(z)s)$ for $z=(z^i)$ sufficiently close to $0$, and $f(0)=1$. Hence, for the symplectic form $\zw_0$ on $P_0$, with
$$\zw_0(v)=\xd s\we(g_i\xd z^i)+h_{ij}\xd z^i\we\xd z^j,\quad g_i,h_{ij}\in\R,$$
we have
$$(h^0_{s_0})^*(\zw_0(v))=\xd(fs)(0,a)\we(g_i\xd z^i)+h_{ij}\xd z^i\we\xd z^j=\xd s\we(g_i\xd z^i)+h'_{ij}\xd z^i\we\xd z^j.$$
The identity $(h^0_{s_0})^*(\zw_0(v))=s_0\cdot\zw_0(v)$, with $s_0\ne 1$, would imply
$$\xd s\we(g_i\xd z^i)=s_0\cdot\xd s\we(g_i\xd z^i);$$
a contradiction, since $g_j\xd u^j\ne 0$ (otherwise $\zw_0$ would not be symplectic).

\mn We have just proved that the smooth $\Rt$-action $h^0$ on $P_0$ is free. Now, we will show that it is proper. To prove this, it is convenient to use Borel's characterization of properness (see \cite[Thm. 1.2.9 (5)]{Palais:1961}, attributed there to Borel): a $G$-action on a manifold $N$ is proper if and only if for every compact subset $K$ of $N$ the subset
$$(K\big| K)=\{g\in G\,\big|\ g\cdot K\cap K\ne\emptyset\}$$
of $G$ is compact.

So, let us take a compact subset $K\subset P_0$, and a sequence $m_1,m_2,\dots$
of points from $K$ which is dense in $K$. Let $x_1,x_2,\dots$ be a sequence of points in $P$ such that $p(x_i)=m_i$, and let $U_i$ be a neighbourhood of $x_i$ having compact closure $\bar U_i$, for all $i=1,2,\dots$. Since $p$ is an open map, the family of open sets $\{p(U_i)\}_{i\in\N}$ covers $K$, so we can choose a finite covering, say $U_1\cup U_2\cup\cdots\cup U_r$. But then
$\wt K=\bar U_1\cup\bar U_2\cup\cdots\cup\bar U_r$ is a compact set and $K\subset p(\wt K)$. Consequently, $(K\big| K)$ is a closed subset of $(\wt K\big|\wt K)$ which is compact, since the $\Rt$-action on $P$ is proper. Moreover, $(P_0,\zt_0,M_0,h^0,\zw_0)$ is a symplectic $\Rt$-bundle and $p$ is a submersive homomorphism of $\Rt$-bundles. Here, $M_0=P_0/\Rt$. The projection $\zt:P\to M$ commutes with the $\Rt$-action, so induces a smooth map
$p_0:M\to M_0$ defined by $p_0(\zt(x))=\zt_0(p(x))$. Let us observe that
$p_0(\zt(x))=p_0(\zt(x'))$ if and only if $p(x)$ and $p(x')$ belong to the same $\Rt$-orbit in $P_0$, say $p(x')=h^0_r(p(x))$, so if and only if $p(x)=p(h_r(x'))$. This, in turn, is equivalent to the fact that $x$ and $h_r(x')$ belong to the same leaf of $\cF_\zw$, and further that $\zt(x)$ and $\zt(x')$ belong to the same leaf of $\cF_C$.
This means that $M_0$ is a manifold of leaves of $\cF_C$, which finishes the proof.

\end{proof}
\begin{remark}
The reduction (\ref{cms0}) we call the \emph{symplectic reduction of a presymplectic $\Rt$-bundle}. Since any foliation is locally trivial, thus locally simple, symplectic reductions as above can always be done locally, i.e., on presymplectic $\Rt$-bundles $\zt^{-1}(U)\subset P$, where $U$ is an open submanifold of $M$ on which the foliation $\cF_C$ is trivial.
\end{remark}
\no If actions of Lie groups by contactomorphisms are concerned, it is easy to see that such actions `commute' with the contact (equivalently, symplectic) reductions. Namely, as the contact actions of Lie groups on precontact manifolds (equivalently, actions on presymplectic $\Rt$-bundles by automorphisms) respect the characteristic distribution, Theorem \ref{main}, together with Corollary \ref{cH}, immediately imply the following.
\begin{proposition}\label{action}
Let $(P,\zt,M,h,\zw)$ be a presymplectic cover of a precontact manifold $(M,C)$, let $\zr:G\ti M\to M$ be an action of a Lie group $G$ on $M$ by contactomorphisms, and let $\wt\zr:G\ti P\to P$ be the canonical lift to a $G$-action on $P$ by automorphisms. Suppose that the characteristic foliation $\cF_C$ (equivalently, the characteristic foliation $\cF_\zw$) is simple, so we have the contact (symplectic) reduction (\ref{cms0}). Then the $G$-actions $\zr$ and $\wt\zr$ induce canonically actions $\zr^0:G\ti M_0\to M_0$ and $\wt\zr^0:G\ti P_0\to P_0$ by contactomorphisms and automorphisms, respectively, according to the formulae
$$\zr^0_g(p_0(y))=p_0(\zr_g(y))\quad\text{and}\quad \wt\zr^0_g(p(x))=p(\wt\zr_g(x)).$$
In particular, $\zt_0\circ\wt\zr^0_g=\zr^0_g$.
\end{proposition}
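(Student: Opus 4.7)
The plan is to leverage the observation that contactomorphisms preserve the characteristic distribution $\zq(C)$ (hence the foliation $\cF_C$) and, dually, that automorphisms of a presymplectic $\Rt$-bundle preserve $\zq(\zw)$ (hence $\cF_\zw$). Once well-definedness is settled via this preservation, the remaining properties --- smoothness, the action axioms, and preservation of the reduced contact and symplectic structures --- fall out of diagram chases using $p_0$, $p$, and the intertwining relations.

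First I would check that the prescriptions give well-defined maps. For $\zr^0_g$: since $\zr_g$ is a contactomorphism, $\sT\zr_g(C)=C$, and therefore $\zr_g$ preserves the characteristic distribution $\zq(C)=\ker\zn^C$ and permutes the leaves of $\cF_C$. Hence $p_0\circ\zr_g$ is constant on $\cF_C$-leaves, so it descends through the surjective submersion $p_0$ to a map $\zr^0_g:M_0\to M_0$ satisfying the stated formula, automatically smooth by the universal property of submersions. A symmetric argument, invoking Corollary \ref{cH} to identify $\wt\zr$ with an action by automorphisms of the presymplectic $\Rt$-bundle, shows that $\wt\zr_g$ preserves $\zw$ and hence $\zq(\zw)$, so it permutes leaves of $\cF_\zw$ and descends through $p$ to a smooth map $\wt\zr^0_g:P_0\to P_0$. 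The action axioms $\zr^0_e=\id$, $\zr^0_{gg'}=\zr^0_g\circ\zr^0_{g'}$ (and their $\wt{}$ counterparts), together with joint smoothness in $(g,\cdot)$, are read off the corresponding identities on $M$ and $P$ by composing with $p_0$ and $p$.

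It then remains to verify that the reduced actions preserve the reduced structures. For $\zr^0_g$, using $C_0=\sT p_0(C)$ from Theorem \ref{main} and $p_0\circ\zr_g=\zr^0_g\circ p_0$, one computes
$$\sT\zr^0_g(C_0)=\sT\zr^0_g\circ\sT p_0(C)=\sT p_0\circ\sT\zr_g(C)=\sT p_0(C)=C_0,$$
so $\zr^0_g$ is a contactomorphism. For $\wt\zr^0_g$, the $\Rt$-equivariance $\wt\zr^0_g\circ h^0_s=h^0_s\circ\wt\zr^0_g$ follows by combining $\wt\zr_g\circ h_s=h_s\circ\wt\zr_g$ with the morphism property $p\circ h_s=h^0_s\circ p$. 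To see $(\wt\zr^0_g)^*\zw_0=\zw_0$, pull back by $p$:
$$p^*(\wt\zr^0_g)^*\zw_0=(\wt\zr^0_g\circ p)^*\zw_0=(p\circ\wt\zr_g)^*\zw_0=\wt\zr_g^*\,p^*\zw_0=\wt\zr_g^*\zw=\zw=p^*\zw_0,$$
and since $p^*$ is injective on forms (as $p$ is a surjective submersion), we get $(\wt\zr^0_g)^*\zw_0=\zw_0$. Finally, the compatibility $\zt_0\circ\wt\zr^0_g=\zr^0_g\circ\zt_0$ follows from a point chase:
$$\zt_0(\wt\zr^0_g(p(x)))=\zt_0(p(\wt\zr_g(x)))=p_0(\zt(\wt\zr_g(x)))=p_0(\zr_g(\zt(x)))=\zr^0_g(\zt_0(p(x))).$$

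There is no serious obstacle here; the whole statement is essentially a consequence of the equivariance established in Theorem \ref{main} and Corollary \ref{cH}. The only mild subtlety, worth stating cleanly once, is the bookkeeping around smoothness: at every descent step one must observe that the candidate map factors through a surjective submersion (either $p_0$, $p$, or $\id_G\times p$), which is where smoothness of the quotient action comes from.
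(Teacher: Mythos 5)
Your proof is correct and follows exactly the route the paper intends: the paper states this proposition as an immediate consequence of Theorem \ref{main} and Corollary \ref{cH}, precisely because contactomorphisms (resp.\ $\Rt$-bundle automorphisms) preserve $\zq(C)$ (resp.\ $\zq(\zw)$) and hence permute the leaves, so everything descends through the surjective submersions $p_0$ and $p$ as you spell out. Your write-up simply makes explicit the well-definedness, smoothness, and structure-preservation checks that the paper leaves to the reader.
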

\subsection{Precontact and presymplectic reductions}
Sometimes we have to deal with reductions of precontact and presymplectic manifolds with respect to foliations smaller than the characteristic foliations. In particular, we have the following obvious generalization of the standard symplectic reduction of presymplectic manifolds.
\begin{proposition} Let $(P,\zw)$ be a presymplectic manifold of rank $2r$. If $\cF$ is a simple foliation of $P$, whose leaves are integral submanifolds of the characteristic distribution $\zq(\zw)$, then the manifold $P_0=P/\cF$ of leaves of $\cF$ carries a canonical presymplectic form $\zw_0$ of rank $2r$ such that $p^*(\zw_0)=\zw$, where $p:P\to P_0$ is the canonical submersion.
\end{proposition}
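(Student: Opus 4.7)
The plan is to define $\zw_0$ pointwise by pushing $\zw$ down along the submersion $p$, and then verify smoothness, closedness, and constancy of the rank. For any $y_0\in P_0$ and any $v_0,w_0\in\sT_{y_0}P_0$, I would pick some $x\in p^{-1}(y_0)$ together with lifts $v,w\in\sT_xP$ satisfying $\sT p(v)=v_0$, $\sT p(w)=w_0$, and set $\zw_0(y_0)(v_0,w_0):=\zw(x)(v,w)$. Independence from the choice of lifts is immediate, since two lifts of the same vector differ by a vector tangent to the leaf of $\cF$ through $x$, which by hypothesis lies in $\zq(\zw)$ and therefore contracts trivially with $\zw$.

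The substantive part of well-definedness is independence from the choice of the base point $x\in p^{-1}(y_0)$. My approach is a Lie-derivative argument: extend $v_0,w_0$ to local $p$-projectable vector fields $V,W$ whose projections are held constant, and let $Y$ be a vector field tangent to the leaves of $\cF$. Since $Y\in\zq(\zw)$ and $\xd\zw=0$, Cartan's formula yields $\Ll_Y\zw=\xd\, i_Y\zw+i_Y\xd\zw=0$. Moreover, $[Y,V]$ and $[Y,W]$ are $p$-projectable and project to zero, hence are tangent to the leaves, hence lie in $\zq(\zw)$, and contract trivially with $\zw$. This gives $Y(\zw(V,W))=0$, showing that $\zw(V,W)$ is constant along each fiber of $p$ and so $\zw_0$ is well defined.

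Smoothness is then automatic: locally over $U\subset P_0$, any local section $\zs:U\to P$ of $p$ gives $\zw_0|_U=\zs^*\zw$, which is smooth. The identity $p^*\zw_0=\zw$ is built into the construction, so closedness follows from $p^*\xd\zw_0=\xd p^*\zw_0=\xd\zw=0$ together with injectivity of $p^*$ on forms over $P_0$, which holds because $p$ is a surjective submersion. Uniqueness of $\zw_0$ is immediate from the same injectivity.

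Finally, for the rank: a vector $v_0=\sT p(v)$ lies in $\zq(\zw_0)$ iff $\zw(v,w)=0$ for every $w\in\sT_xP$ (using surjectivity of $\sT p$), i.e., iff $v\in\zq(\zw)$. Thus $\zq(\zw_0)=\sT p(\zq(\zw))$, a distribution of constant rank $\rk\zq(\zw)-\dim\cF$, where constancy follows from the regularity of $\cF$ and the constant rank of $\zq(\zw)$ (which is built into the presymplectic assumption). A dimension count then gives $\rk\zw_0=\dim P_0-(\rk\zq(\zw)-\dim\cF)=\dim P-\rk\zq(\zw)=2r$. The one nontrivial step in this plan is the Lie-derivative bookkeeping ensuring leaf-independence; the remainder is essentially formal.
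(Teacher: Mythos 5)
Your proof is correct and complete: the paper itself states this proposition without proof, presenting it as an ``obvious generalization of the standard symplectic reduction of presymplectic manifolds,'' and your argument --- pointwise pushforward with lift-independence from $\sT\cF\subset\zq(\zw)$, fiberwise constancy via $\Ll_Y\zw=0$ together with verticality of $[Y,V]$ and $[Y,W]$, smoothness from local sections, and the rank count based on $\zq(\zw_0)=\sT p(\zq(\zw))$ --- is exactly the standard argument being invoked. Nothing needs to be added.
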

In the context of precontact manifolds and their presymplectic covers, we can proceed exactly like in Theorems \ref{cr} and \ref{main} for foliations $\cF_M$ and $\cF_P$ replacing $\cF_C$ and $\cF_\zw$, respectively, where the leaves of $\cF_P$ cover the leaves of $\cF_M$ (thus $\cF_P$ is $\Rt$-invariant), if only their leaves are integral submanifolds of the corresponding characteristic foliations. The proofs are essentially the same; the ranks of the reduced $\zw_0$ and $C_0$ remain the same, only the reduced manifolds $P_0$ and $M_0$ are generally of dimensions
greater than the dimensions obtained for contact and symplectic reductions. Hence, we can formulate the following theorem on precontact/presymplectic reductions.
\begin{theorem}\label{main1}
Let $(P,\zt,M,h,\zw)$ be a presymplectic cover of a precontact manifold $(M,C)$ of rank $(2r+1)$. Then there is a canonical one-to-one correspondence between regular $\Rt$-invariant involutive distributions $\zq_P$ of rank $k$ on $P$ such that $\zq_P\subset \zq(\zw)$, and regular involutive distributions $\zq_M$ of rank $k$ on $M$ such that $\zq_M\subset \zq(C)$, given by $\zq_M=\sT\zt(\zq_P)$. Let $\cF_P$ and $\cF_M$ be the corresponding foliations. Then $\cF_P$ is simple if and only if $\cF_M$ is simple.

\mn In this case, the manifold $P_0=P/\cF_P$ of $\cF_P$-leaves caries a canonical structure of a presymplectic $\Rt$-bundle $(P_0,\zt_0,M_0,h^0,\zw_0)$ of rank $2(r+1)$ such that
the canonical submersion $p:P\to P_0$ is a submersive morphism of $\Rt$-bundles, i.e., $h^0_s\circ p=p\circ h_s$, and $\zw=p^*(\zw_0)$.

\mn In particular, we have the following commutative diagram of a surjective morphism of $\Rt$-bundles:
$$
\xymatrix@C+50pt{
P \ar[r]^{p}\ar[d]_{\zt} & P_0\ar[d]^{\zt_0} \\
M \ar[r]^{p_0} & M_0.}
$$
Moreover, $M_0=M/\cF_M$, and $M_0$ carries a canonical precontact structure $C_0$ of rank $(2r+1)$ such that $C_0=\sT p_0(C)$, where $p_0$ is the canonical submersion onto the manifold of leaves of $\cF_M$.
\end{theorem}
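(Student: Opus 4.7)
The plan is to mimic the proofs of Theorems \ref{cr} and \ref{main}, with the full characteristic distributions $\zq(C)$ and $\zq(\zw)$ replaced throughout by the prescribed sub-distributions $\zq_M$ and $\zq_P$; the hypothesis $\zq_P \subset \zq(\zw)$ (resp.\ $\zq_M \subset \zq(C)$) is exactly what guarantees that the passage from $\zw$ to the reduced 2-form does not drop rank. First I would establish the bijection $\zq_P \leftrightarrow \zq_M$. Given an $\Rt$-invariant regular involutive $\zq_P \subset \zq(\zw)$, the transversality of $\zq(\zw)$ to the fibers of $\zt$ (Proposition \ref{zvy}(2)) forces $\sT\zt$ to be injective on each fiber of $\zq_P$, so $\zq_M := \sT\zt(\zq_P)$ is a rank-$k$ distribution contained in $\sT\zt(\zq(\zw)) = \zq(C)$; its smoothness follows from the $\Rt$-invariance of $\zq_P$ together with local triviality of $P \to M$, and its involutivity transfers from $\zq_P$ since the push-forward of brackets of $\zt$-related vector fields yields brackets of the push-forwards. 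Conversely, given a regular involutive $\zq_M \subset \zq(C)$, define $\zq_P$ locally by lifting a frame $Y_1, \ldots, Y_k$ of $\zq_M$ to the unique $\Rt$-invariant vector fields in $\zq(\zw)$ projecting to them — uniqueness is provided again by the transversality of $\zq(\zw)$ to the fibers. Involutivity of $\zq_P$ then follows because $[X_i, X_j]$ is $\Rt$-invariant, lies in $\zq(\zw)$, and projects into $\zq_M$, hence lies in $\zq_P$.

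Next I would prove the equivalence of simplicity together with the reduction. Exactly as in Theorem \ref{FT}, transversality of $\zq_P$ to the fibers of $\zt$ implies that $\zt$ maps each leaf of $\cF_P$ locally diffeomorphically onto a leaf of $\cF_M$, so the leaf spaces agree set-theoretically and one is a smooth manifold iff the other is. If $\cF_M$ is simple, the verbatim argument of Theorem \ref{cr} equips $M_0 = M/\cF_M$ with a smooth field of hyperplanes $C_0 = \sT p_0(C)$; the only new point to verify is the rank. But $\zq_M \subset \zq(C) = \ker(\zn^C)$, so the push-forward of $\zn^C$ to $C_0$ has the same rank $2r$ as $\zn^C$, which makes $C_0$ precontact of rank $(2r+1)$. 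One then constructs $P_0$ as the pull-back presymplectic $\Rt$-bundle $P_1 = p_0^*\bigl((C_0^o)^\ti\bigr)$ carrying the 1-homogeneous form $\zw_1 = p_1^*(\zw_0)$; the computation following the diagram in the proof of Theorem \ref{main} shows $\sT\zt_1(\ker\zvy_1) = C$, so by Theorem \ref{FT} the bundle $P_1$ is isomorphic to $P$, which makes $\cF_P$ simple. Conversely, if $\cF_P$ is simple, the $\Rt$-action descends to a smooth action $h^0$ on $P_0$ via $h^0_s \circ p = p \circ h_s$ (well-defined since $\zq_P$ is $\Rt$-invariant), and standard presymplectic reduction produces $\zw_0$ with $p^*(\zw_0) = \zw$; 1-homogeneity $(h^0_s)^*\zw_0 = s \cdot \zw_0$ is inherited. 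The proofs of freeness and of properness of $h^0$ are identical to those of Theorem \ref{main}: freeness uses the local Darboux-type coordinates and the observation that a 1-homogeneous symplectic/presymplectic form cannot be preserved by a nontrivial fiberwise dilation, while properness follows from Borel's criterion by lifting a finite cover of a compact $K \subset P_0$ along the open surjection $p$.

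The rank of $\zw_0$ equals $2(r+1)$ because the quotient is taken only by a sub-distribution of $\zq(\zw)$, so the dimension of $\zq(\zw_0)$ at $[x] \in P_0$ is $\dim \zq(\zw)(x) - k$, which drops by precisely the amount needed to keep the rank of the 2-form constant. Finally, the transversality of $\zq(\zw_0)$ to the fibers of $\zt_0$ follows either from Theorem \ref{t1} applied to any local precontact form on $M_0$ pulled back through the trivialization induced by $p$, or equivalently from the fact that the Liouville 1-form $\zvy_0$ defined by $p^*(\zvy_0) = \zvy$ is 1-homogeneous, semi-basic, and nowhere vanishing. The main obstacle I expect is not any single step but the bookkeeping that ensures each ingredient of the proofs of Theorems \ref{cr} and \ref{main} — simplicity, the Darboux-style local model, the freeness coordinate computation, and the correct rank count — all adapt simultaneously when the reduction is only partial; the saving grace is that each of these arguments depended only on the containment $\zq_P \subset \zq(\zw)$ and the transversality of $\zq(\zw)$ to the fibers, both of which remain in force.
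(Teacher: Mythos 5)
Your strategy is exactly the paper's: the paper proves Theorem \ref{main1} precisely by declaring that the arguments of Theorems \ref{cr} and \ref{main} go through with $\zq_M,\zq_P$ replacing $\zq(C),\zq(\zw)$, and your verification of the correspondence $\zq_P\leftrightarrow\zq_M$ (well-defined and smooth by $\Rt$-invariance, injectivity of $\sT\zt$ on $\zq(\zw)$, involutivity via projectable/invariant frames) and the rank count for $\zw_0$ are the intended supplements. Two places, however, need correction rather than bookkeeping. First, the sentence ``the leaf spaces agree set-theoretically'' is false: since $\n\notin\zq(\zw)$ (because $i_\n\zw=\zvy\ne 0$), no leaf of $\cF_P$ contains the $\Rt$-direction, so the distinct leaves $h_s(F)$, $s\in\Rt$, all project onto one and the same leaf of $\cF_M$, and $P/\cF_P$ is an $\Rt$-bundle over $M/\cF_M$, not equal to it; also, simplicity is more than the leaf space being a manifold (the projection must be a submersion), so this remark cannot stand in for the simplicity equivalence. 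Fortunately you do not actually use it: the equivalence is carried by the two constructions you then outline, but in the converse direction the identification $P_0/\Rt\simeq M/\cF_M$ (which is what yields simplicity of $\cF_M$) must be done as in the final paragraph of the proof of Theorem \ref{main}, using that $\zt$ maps leaves of $\cF_P$ onto whole leaves of $\cF_M$.

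Second, in the forward direction the conclusion ``which makes $\cF_P$ simple'' needs one step that is absent from Theorem \ref{main}: there one has $\ker\sT p_1=\zq(\zw_1)$ because $\zw_0$ is symplectic, whereas here $\zw_0$ is only presymplectic, so $\ker\sT p_1$ is a proper subdistribution of $\zq(\zw_1)$, and one must check that it is exactly the $\Rt$-invariant lift of $\zq_M$, i.e.\ the distribution matching $\zq_P$ under your bijection and the isomorphism $P\simeq P_1$ of Theorem \ref{FT}; this follows from $\ker\sT p_1\subset\zq(\zw_1)$, $\Rt$-invariance, $\sT\zt_1(\ker\sT p_1)\subset\ker\sT p_0=\zq_M$ and a dimension count, but it has to be said. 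Similarly, in the freeness argument the final contradiction in Theorem \ref{main} was justified by ``otherwise $\zw_0$ would not be symplectic''; in the present setting the needed fact is that the vertical direction does not lie in $\ker\zw_0$, which is supplied by your own observation that $\zvy_0$ with $p^*(\zvy_0)=\zvy$ is nowhere vanishing and equals $i_{\n_0}\zw_0$. With these repairs the proposal is a correct rendering of the paper's (deliberately terse) proof.
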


\section{Hamiltonian dynamics on precontact manifolds}
\subsection{Precontact Hamiltonians}
A vector field on a presymplectic manifold $(P,\zw)$ we will call a \emph{Hamiltonian vector field} if there is a function $H$ (called a \emph{Hamiltonian of $X$}) on $P$ such that $i_X\zw=-\xd H$.
It is easy to check that a function $H$ is a Hamiltonian if and only if $H$ is constant on the leaves of the characteristic foliation $\cF_\zw$. Note that there is no guarantee that such functions are not just constants. Moreover, if $\zw$ is not symplectic, then a Hamiltonian $H$ does not determine $X$ uniquely: Hamiltonian vector fields $X_H$ with the same Hamiltonian $H$ differ by a vector field taking values in the characteristic distribution $\zq(\zw)$. On the other hand, we have the following.
\begin{proposition} On every presymplectic manifold $(P,\zw)$ there exists a Hamiltonian vector field $X_H$ for each Hamiltonian $H$ (i.e., a function on $P$ which is constant on the leaves of $\cF_\zw$). Two such Hamiltonian vector fields differ by a vector field taking values in the characteristic distribution $\zq(\zw)$. Moreover, we have a well-defined `Poisson bracket' of Hamiltonians, given by
\be\label{Pb}\{H,H'\}_\zw=X_H(H')=\zw(X_H,X_{H'}),\ee
where $X_H,X_{H'}$ are Hamiltonian vector fields with Hamiltonians $H,H'$.
\end{proposition}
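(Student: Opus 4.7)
The approach rests on analyzing the bundle morphism $\zw^\flat:\sT P\to\sT^*P$, $X\mapsto i_X\zw$. Since $\zw$ has constant rank $2r$, at every point $\ker(\zw^\flat)=\zq(\zw)$ and, by a fiberwise dimension count, the image of $\zw^\flat$ is exactly the annihilator subbundle $\zq(\zw)^o\subset\sT^*P$: any $\zb=i_X\zw$ vanishes on $V\in\zq(\zw)$ since $\zw(X,V)=-(i_V\zw)(X)=0$, and the ranks match. Consequently $\zw^\flat$ descends to a vector bundle isomorphism $\sT P/\zq(\zw)\simeq\zq(\zw)^o$.

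The first assertion then proceeds as follows. A function $H$ on $P$ is constant on the (connected) leaves of $\cF_\zw$ if and only if $\xd H$ annihilates $\zq(\zw)$ pointwise, i.e. $-\xd H$ is a smooth global section of $\zq(\zw)^o$. To produce a global Hamiltonian vector field $X_H$, I would fix any Riemannian metric on $P$, take $E\subset\sT P$ to be the orthogonal complement of $\zq(\zw)$, and define $X_H$ as the unique section of $E$ satisfying $i_{X_H}\zw=-\xd H$; this is well defined because $\zw^\flat|_E:E\to\zq(\zw)^o$ is then a vector bundle isomorphism. A partition-of-unity argument glueing local Darboux-type solutions would equally work, since the space of local solutions is affine over $\zq(\zw)$. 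Uniqueness modulo $\zq(\zw)$ is immediate: if $i_X\zw=-\xd H=i_{X'}\zw$, then $X-X'\in\ker(\zw^\flat)=\zq(\zw)$.

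For the Poisson bracket, the identity $X_H(H')=\zw(X_H,X_{H'})$ follows by evaluating $i_{X_{H'}}\zw=-\xd H'$ on $X_H$, which yields $\zw(X_{H'},X_H)=-X_H(H')$. To check independence from the choice of $X_H$, I replace $X_H$ by $X_H+V$ with $V\in\zq(\zw)$; the new value differs by $V(H')=\xd H'(V)=-\zw(X_{H'},V)=0$, using $i_V\zw=0$. Independence from the choice of $X_{H'}$ is symmetric: replacing $X_{H'}$ by $X_{H'}+W$ with $W\in\zq(\zw)$ adds $\zw(X_H,W)=0$. The only step requiring any genuine care is the global existence of $X_H$; once one fixes a complementary subbundle (or invokes a partition of unity), the remainder is formal linear-algebraic bookkeeping.
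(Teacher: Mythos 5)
Your proof is correct, but it reaches the global existence of $X_H$ by a different route than the paper. You first identify the image of $\zw^\flat:\sT P\to\sT^*P$ with the annihilator $\zq(\zw)^o$ (using constancy of the rank and the fact that $H$ being constant on the connected leaves of $\cF_\zw$ is equivalent to $\xd H$ annihilating $\zq(\zw)$), and then invert $\zw^\flat$ on a complement $E$ of $\zq(\zw)$ chosen via an auxiliary Riemannian metric, so that $X_H=(\zw^\flat\vert_E)^{-1}(-\xd H)$ is produced directly and globally. The paper instead constructs local Hamiltonian vector fields by passing to a local symplectic reduction $p:U\to U_0=U/\cF_\zw$, writes $H=H_0\circ p$, lifts $X_{H_0}$, and then observes that the set $A=\{X_x\in\sT_xP\mid i_X\zw(x)=-\xd H(x)\}$ is a locally trivial affine subbundle of $\sT P$ with contractible fibers, hence admits a global section; your parenthetical partition-of-unity alternative is essentially this argument. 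Your main argument is more elementary and self-contained (no reduction lemma, no appeal to sections of affine bundles), at the cost of an arbitrary choice of metric; the paper's argument exhibits the affine-bundle structure of the full solution set and ties the statement to the local reduction picture used throughout the paper. The uniqueness modulo $\zq(\zw)$ and the well-definedness of $\{H,H'\}_\zw$ are handled in both cases by the same formal computations, and your sign bookkeeping matches the paper's conventions.
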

\begin{proof} For any $x_0\in P$ there is a neighbourhood $U$ of $x_0$ and a vector field $X^U_H$ on $U$ such that $i_{X^U_H}\zw(x)=-(\xd H)(x)$ for $x\in U$. This is because we can choose $U$ such that $\cF_\zw$ is trivial on $U$, so we can pass locally to the symplectic reduction $p:U\to U_0=U/\cF_\zw$ such that $\zw=p^*(\zw_0)$ for a symplectic form $\zw_0$ on $U_0$. Then $H=H_0\circ p$ for some Hamiltonian $H_0$ on the symplectic manifold $(U_0,\zw_0)$, and as $X^U_H$ we can take any vector field on $U$ which projects \emph{via} $\sT p$ onto the Hamiltonian vector field $X_{H_0}$ on $U_0$. This is possible, since $p$ is a trivial fiber bundle. Of course, any two such vector fields $X^U_H$ differ by a vector field whose projection on $U_0$ is 0. What we have just proved is that the submanifold $A\subset\sT P$, defined by
$$A=\{X_x\in\sT_xP\,\big|\ i_{X}\zw(x)=-(\xd H)(x),\ x\in P\}$$
is a smooth locally trivial affine subbundle in $\sT P$. By topological reasons (the fibers of $A$ are contractible), there is always a global section $X_H$ of $A$.
Finally, the definition of the Poisson bracket is correct, since (\ref{Pb}) does not depend on the choice of the Hamiltonian vector fields.

\end{proof}
\no From Corollary \ref{cH} we deduce now the following.
\begin{proposition}
Let $(P,\zt,M,h,\zw)$ be a presymplectic cover of a precontact manifold $(M,C)$, and let $X$ be an $\Rt$-presymplectic vector field, i.e., $X$ is an $\Rt$-invariant vector field such that $\Ll_X\zvy=0$. Then $X$ admits a unique 1-homogeneous Hamiltonian $H$ defined by $H=i_X\zvy$. Consequently, any contact vector field $X^c$ on $M$ determines a unique 1-homogeneous Hamiltonian $H$ on $P$ and a unique $\Rt$-invariant Hamiltonian vector field $X$ on $P$ such that $i_X\zw=-\xd H$ and $\zt_*(X)=X^c$.
\end{proposition}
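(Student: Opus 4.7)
The plan is to verify the three claims — homogeneity, the Hamiltonian identity, and uniqueness — in that order, and then to assemble them with Corollary \ref{cH} to produce the contact-to-presymplectic passage.

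\textbf{Step 1 (1-homogeneity of $H$).} I first check that $H=i_X\zvy$ is 1-homogeneous. Since $X$ is $\Rt$-invariant (homogeneous of degree $0$) and $\zvy$ is $1$-homogeneous, the general identity (\ref{gf}) applied to $\zF=h_s$ gives $(h_s)^*(i_X\zvy)=i_X(h_s)^*\zvy=s\cdot i_X\zvy$, i.e.\ $H\circ h_s = s\,H$. This settles the homogeneity.

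\textbf{Step 2 (Hamiltonian identity).} The key computation is Cartan's magic formula applied to the hypothesis $\Ll_X\zvy=0$:
\[
0=\Ll_X\zvy=i_X\xd\zvy+\xd\,i_X\zvy=i_X\zw+\xd H.
\]
Hence $i_X\zw=-\xd H$, so $H$ is indeed a Hamiltonian for $X$ on the presymplectic manifold $(P,\zw)$.

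\textbf{Step 3 (Uniqueness of the 1-homogeneous Hamiltonian).} Suppose $H'$ is another 1-homogeneous function with $i_X\zw=-\xd H'$. Then $\xd(H-H')=0$, so on each connected component of $P$ the difference equals a constant $c$. Evaluating $(H-H')\circ h_s=s(H-H')$ forces $c=s\,c$ for every $s\in\Rt$, hence $c=0$. Thus $H$ is unique among $1$-homogeneous Hamiltonians for $X$.

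\textbf{Step 4 (Consequence for contact vector fields).} Given a contact vector field $X^c$ on $M$, Corollary~\ref{cH} supplies a unique $\Rt$-presymplectic vector field $X$ on $P$ with $\zt_*(X)=X^c$; by Proposition~\ref{zvy}(4) this $X$ satisfies $\Ll_X\zvy=0$, so Steps 1--3 attach to it a unique 1-homogeneous Hamiltonian $H=i_X\zvy$ with $i_X\zw=-\xd H$. For the uniqueness of $X$ itself among $\Rt$-invariant Hamiltonian vector fields with the given Hamiltonian and the given $\zt$-projection, I argue that if $X'$ is another such choice, the difference $Y=X-X'$ is $\Rt$-invariant, $\zt$-vertical, and satisfies $i_Y\zw=0$, hence $Y\in\zq(\zw)$; but by the definition of a presymplectic $\Rt$-bundle the characteristic distribution $\zq(\zw)$ is transversal to the fibers of $\zt$, forcing $Y=0$.

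I do not expect a serious obstacle here: the proof relies only on Cartan's formula, the degree-addition rule for contractions stated in the introduction, the uniqueness part of Corollary~\ref{cH}, and the transversality built into the definition of a presymplectic $\Rt$-bundle. The only point where some care is needed is the homogeneity-versus-constant argument in Step 3, which is harmless but would fail for $0$-homogeneous Hamiltonians.
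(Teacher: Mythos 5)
Your proof is correct and follows essentially the same route as the paper: 1-homogeneity of $H=i_X\zvy$ by the degree count, Cartan's formula for $i_X\zw=-\xd H$, and uniqueness from closedness plus homogeneity (the paper phrases your Step 3 more directly via the Euler identity $H-H_1=\Ll_{\n}(H-H_1)=i_{\n}\xd(H-H_1)=0$, which avoids the component-by-component constant argument, but both are equivalent). Your Step 4 transversality argument for the uniqueness of $X$ is a valid supplement to the paper, which simply invokes the one-to-one correspondence of Corollary \ref{cH} at that point.
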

\begin{proof}
Since $X$ and $\zvy$ are homogeneous of degrees 0 and 1, respectively, $H=i_X\zvy$ is homogeneous of degree 1. Moreover,
$$i_X\zw=i_X\xd\zvy=\Ll_X\zvy-\xd(i_X\zvy)=-\xd H,$$
so $H$ is a Hamiltonian for $X$.
If $H_1$ is another 1-homogeneous Hamiltonian for $X$, then $\xd(H-H_1)=0$. But $H-H_1$ is 1-homogeneous, so
$$H-H_1=\Ll_{\n}(H-H_1)=i_{\n}\xd(H-H_1)=0.$$
\end{proof}
\noindent We will call $H=i_X\zvy$ the \emph{contact Hamiltonian} of $X^c$. Of course, a contact Hamiltonian can be associated with many contact vector fields $X^c$.
\begin{remark}
Note that the Hamiltonian $H$ associated with $X^c$ can be described directly by
$H(x)=\zi_{\zs_{X^c}}(x)$, where $\zi_{\zs_{X^c}}$ is the linear function on $\left(L^C\right)^*$ defined by the section $\zs_{X^c}$ of $L^C=\sT M/C$, $\zs_{X^c}(y)=\zr^C(X^c(y))$, where we identify $P$ with $\big[\left(L^C\right)^*\big]^\ti$ (cf. Theorem \ref{FT}).
\end{remark}

\subsection{Contact Hamiltonian mechanics}
In this section, we will concentrate on contact manifolds and their symplectic covers. In this case, any function on the symplectic cover is a Hamiltonian and determines a unique Hamiltonian vector field. Note first that we are using the convention in which the canonical symplectic form $\zw_N$ on $\sT^*N$ is $\zw_N=\xd p_i\we\xd q^i$ in canonical coordinates, i.e., $\zw_N=\xd\zvy_N$ for $\zvy_N=p_i\xd q^i$ being the Liouville 1-form on $\sT^*N$, and Hamiltonian vector fields are uniquely defined by
$$ i_{X_H}\zw=-\xd H.$$
This gives the correct Hamilton's equations, and the map $H\mapsto X_H$ is a morphism of the symplectic Poisson bracket
$$\{ H_1,H_2\}_\zw=X_{H_1}(H_2)=\zw(X_{H_1},X_{H_2})$$ into the Lie bracket of vector fields,
$X_{\{ H_1,H_2\}_\zw}=[X_{H_1},X_{H_2}]$.

\mn The commonly accepted approach to contact Hamiltonian dynamics in the physics literature is constructed almost exclusively only for trivial contact manifolds, i.e., manifolds $M$ equipped with a globally defined contact 1-form $\zh$ (cf. \cite{Bravetti:2017a,deLeon:2019,Esen:2021,Grmela:2014,Mrugala:1991,Rajeev:2008,Shaft:2018}), although a more general approach is also known, especially in the context of Jacobi geometry (see e.g, \cite{Grabowska:2022,Grabowska:2024,Grabowska:2024a, Le:2018,Marle:1991,Tortorella:2017}).

\mn For a real valued function ${\hat H}$ (\emph{contact Hamiltonian}) on a trivial contact manifold $({M},\eta)$, the corresponding \emph{contact Hamiltonian vector field} $X^c_{\hat H}$ is a vector field on $M$ defined as the unique one satisfying
\begin{equation}
i_{X^c_{{\hat H}}}\eta =-{\hat H},\qquad i_{X^c_{{\hat H}}}\xd\eta =\xd {\hat H}-\mathcal{R}({\hat H}) \eta\,,   \label{contact}
\end{equation}%
where $\mathcal{R}$ is the Reeb vector field for $\zh$, i.e., $\cR$ is uniquely determined by $i_\cR\zh=1$ and $i_\cR\xd\zh=0$. In this sense, a \emph{contact Hamiltonian system} is the triple  $({M},\eta,{\hat H})$. Since
\begin{equation}\label{L-X-eta}
\mathcal{L}_{X^c_{{\hat H}}}\eta =
\xd\,i_{X^c_{{\hat H}}}\eta+i_{X^c_{{\hat H}}}\xd\eta= -\mathcal{R}({\hat H})\eta\,,
\end{equation}
$X^c_{\hat H}$ is a contact vector field on $M$ with the conformal factor $\lambda=-\mathcal{R}({\hat H})$.
In this realization, the contact Jacobi bracket of two smooth functions on ${M}$ is defined by
\begin{equation}\label{cont-bracket}
\{\hat  F,\hat H\}_\zh=i_{[X^c_{\hat F},X^c_{\hat H}]}\eta\,.
\end{equation}
\no According to \eqref{L-X-eta}, the flow of a contact Hamiltonian vector field preserves the contact structure, but it does not preserve either the contact one-form nor the Hamiltonian function. Instead, we obtain
$${\mathcal{L}}_{X^c_{\hat H}} \, {\hat H} = - \mathcal{R}({\hat H}) {\hat H}\,.$$
Referring to contact Darboux coordinates $(z,q^i,p_j)$ in which $\zh=\xd z-p_i\xd q^i$, the Hamiltonian vector field determined in \eqref{contact} is computed to be
$$
X^c_{\hat H}=\frac{\partial {\hat H}}{\partial p_i}{\partial_{q^i}}  - \left(\frac{\partial {\hat H}}{\partial q^i} + \frac{\partial {\hat H}}{\partial z} p_i \right)
{\partial_{p_i}} + \left(p_i\frac{\partial {\hat H}}{\partial p_i} - {\hat H}\right){\partial_z},
$$
whereas the contact Jacobi bracket \eqref{cont-bracket} is
$$
\{\hat F,{\hat H}\}_\zh = \frac{\partial \hat F}{\partial q^i}\frac{\partial {\hat H}}{\partial p_i} -
\frac{\partial \hat F}{\partial p_i}\frac{\partial {\hat H}}{\partial q^i} + \left(\hat F  - p_i\frac{\partial \hat F}{\partial p_i} \right)\frac{\partial {\hat H}}{\partial z} -
\left({\hat H}  - p_i\frac{\partial {\hat H}}{\partial p_i}\right)\frac{\partial \hat F}{\partial z}.
$$
So, the Hamilton's equations for ${\hat H}$ read
\begin{equation}\label{conham}
\dot{q}^i= \frac{\partial {\hat H}}{\partial p_i}, \qquad \dot{p}_i = -\frac{\partial {\hat H}}{\partial q^i}-
p_i\frac{\partial {\hat H}}{\partial z}, \quad \dot{z} = p_i\frac{\partial {\hat H}}{\partial p_i} - {\hat H}.
\end{equation}
In our setting, \emph{contact  Hamiltonians} for a contact manifold $(M,C)$ are 1-homogeneous Hamiltonians $H:P\to\R$ on the corresponding symplectic cover $\zt:P\to M$ equipped with a 1-homogeneous symplectic form $\zw$. As we already mentioned, 1-homogeneous Hamiltonians on $P$ can be viewed as sections of the line bundle $L^C\to M$, where $L^C=\sT M/C$.
The section corresponding to a Hamiltonian $H$ we will denote $\zs_H$, and the Hamiltonian corresponding to a section $\zs$ will be denoted $H_\zs$.
Since $\zw$ is 1-homogeneous, the 1-homogeneous Hamiltonians are closed with respect to the symplectic Poisson (Lagrange) bracket $\{ H,H'\}_\zw$. This corresponds to a Jacobi bracket $\{\zs,\zs'\}_J$ on sections of $L^C$ \emph{via}
$$\{\zs_H,\zs_{H'}\}_J=\zs_{\{ H,H'\}_\zw}.$$
The corresponding Hamiltonian vector fields $X_H$ (or $X_\zs$) on $P$ are homogeneous of degree 0, i.e., they are $\Rt$-invariant, therefore they project onto the vector fields $X^c_H=X^c_{\zs_H}=\zt_*(X_H)$ on $M$, called \emph{contact Hamiltonian vector fields}.
Contact Hamiltonian vector  fields are actually contact vector fields, i.e., their flows preserve the contact structure (cf. Proposition \ref{cc}), and $H\mapsto X^c_H$ is a one-to-one correspondence between contact Hamiltonians and contact vector fields.

Note that the above understanding of Hamiltonians as sections of certain line bundles is present already in \cite{Marle:1991} (see also \cite{Le:2018}) and valid for an arbitrary Jacobi bundle.
\begin{example}
Let $P=\R^\ti\ti M$ be the trivial $\R^\ti$-principal bundle with coordinates $(s,y=(y^a))$. Any trivial contact structure on $P$ consists of the symplectic form $\zw_\zh$ associated with a contact 1-form $\zh$ on $M$ and defined by $\zw_\zh=\xd s\we\zh+s\cdot\xd\zh$. If $\cR$ is the Reeb vector field for $\zh$, then $\cR$, viewed as homogeneous vector field of weight 0 on $P$, $\cR(s,y)=\cR(y)$, is the Hamiltonian vector field with respect to $\zw_\zh$ with the Hamiltonian $H=s$, $i_\cR\zw_\zh=-\xd s$. Let us take a function $\hat H$ on $M$ and consider the homogeneous Hamiltonian (the sign depends on conventions)
$$H:P\to\R\,,\quad H(s,y)=-s\cdot\hat H(y)\,.$$
The function $\hat H$ we will call a \emph{reduced contact Hamiltonian} (it is defined only for trivial $\Rt$-bundles). The corresponding Hamiltonian vector field $X_H$ is homogeneous of weight 0, so that
$X_H(s,y)=F(y)\,s\cdot\pa_s+Y(y)$, where $F$ is a (pull-back of a) function on $M$ and $i_Y\xd s=0$, so that the vector field $Y$ can be viewed as tangent to $M$ and therefore identified with the contact Hamiltonian vector field $X^c_H=\zt_*(X_H)$. In other words,
$$ X_H(s,y)=F(y)\,s\,\pa_s+X^c_H(y)\,.$$
We have
$$i_{X_H}\zw_\zh=(sF)\cdot\zh-(i_{X^c_H}\zh)\,\xd s+s\cdot i_{X^c_H}\xd\zh=\hat H\,\xd s+s\,\xd\hat H\,,$$
so that $i_{X^c_H}\zh=-\hat H$ and
$$i_{X^c_H}\xd\zh=\xd\hat H-F\,\zh\,.$$
Contracting both sides with $\cR$, we get $F=\cR(\hat H)$, so
$$X_H(s,y)=\cR(\hat H)(y)\,s\,\pa_s+X^c_H(y),$$
and in this way we reconstructed equations (\ref{contact}) for the reduced contact Hamiltonian $\hat H$. In Darboux coordinates $y=(z,q^i,p_j)$ for $\zh$ we recover the contact Hamilton equations (\ref{conham}).
\end{example}
\section{Contact reductions}
\subsection{Isotropic, coisotropic, and Legendrian submanifolds}
Let us fix a contact manifold $(M,C)$ and its symplectic cover $(P,\zt,M,h,\zw)$.
It is obvious that there is a canonical one-to-one correspondence between submanifolds $N$ of $M$ and $\Rt$-subbundles $\wn$ of $P$, given by $N=\zt(\wn)$ and $\wn=\zt^{-1}(N)$.

\mn Since in our language contact structures are understood as certain  homogeneous symplectic structures, objects in contact geometry are consequently homogeneous objects in symplectic geometry. For instance, a submanifold $N$ of the contact manifold $M$ we call \emph{isotropic} (resp., \emph{coisotropic}, \emph{Legendre}) if the inverse image ${\widetilde{N}}=\zt^{-1}(N)$ is isotropic (resp., coisotropic, Lagrangian) in $(P,\zw)$. These concepts are well known in the literature, even for general Jacobi structures, although often defined in other languages. For some deeper studies on contact coisotropic submanifolds, see e.g, \cite{Le:2018,Rosen:2020,Tortorella:2017,Tortorella:2018}.
\begin{proposition}\label{lag} Let $N$ be a submanifold of a contact manifold $(M,C)$. Then,
\begin{enumerate}
\item $N$ is isotropic if and only if $\sT N\subset C\,\big|_N$. Equivalently, the restriction $\zh\,\big|_N$ of any local contact form $\zh$ associated with $C$ vanishes.
\item $N$ is coisotropic if and only if the contact vector fields $X^c_H$ on $M$, associated with contact Hamiltonians $H:P\to\R$ vanishing on $\wn$, are tangent to $N$.
\item $N$ is a Legendre submanifold if and only if it is simultaneously isotropic and coisotropic.
\end{enumerate}
\end{proposition}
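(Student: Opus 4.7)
The plan is to transport each of the three claims to the symplectic cover $(P,\zw)$ via the correspondence $\wn=\zt^{-1}(N)$, and then deduce them from standard symplectic facts adapted to the $\Rt$-homogeneous setting. Throughout I will use two observations: since $\wn$ is an $\Rt$-subbundle, the Euler vector field $\n$ lies in $\sT\wn$; and by Proposition \ref{zvy}, $\zw=\xd\zvy$ with $\zvy=i_\n\zw$, while in any local trivialization of $P\to M$ over which $C=\ker\zh$ for a local contact form $\zh$, one has $\zvy=s\cdot\zt^*\zh$.

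For (1), I claim $\zw|_{\wn}=0$ iff $\zvy|_{\wn}=0$. The implication $\zvy|_{\wn}=0\Rightarrow\zw|_{\wn}=0$ follows from $\zw=\xd\zvy$ and the commutation of exterior derivative with pullback to a submanifold. Conversely, if $\zw|_{\wn}=0$, then for every $Y\in\sT\wn$ one has $\zvy(Y)=\zw(\n,Y)=0$ since $\n\in\sT\wn$. Using $\zvy=s\cdot\zt^*\zh$ locally, $\zvy|_{\wn}=0$ is equivalent to $\zh|_N=0$, a condition independent of the choice of local contact form in the conformal class, and globally equivalent to $\sT N\subset\ker\zh=C|_N$.

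For (2), the identification $\sT_x\wn=(\sT_x\zt)^{-1}(\sT_{\zt(x)}N)$ shows that $X^c_H=\zt_*(X_H)$ is tangent to $N$ iff $X_H$ is tangent to $\wn$. By the standard symplectic criterion, $\wn$ is coisotropic iff $X_f$ is tangent to $\wn$ for every smooth $f$ vanishing on $\wn$, since the collection $\{X_f(x):f|_{\wn}=0\}$ exhausts the symplectic orthogonal $(\sT_x\wn)^\zw$. The main technical point---the principal obstacle---is to restrict the test functions to $1$-homogeneous Hamiltonians without losing anything. Fix $x\in\wn$ and $\za\in\sT_x^*P$ annihilating $\sT_x\wn$; because $\n(x)\in\sT_x\wn$, $\za$ is horizontal, so $\za=\zt^*\zb$ for some $\zb\in\sT_{\zt(x)}^*M$ annihilating $\sT_{\zt(x)}N$. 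In a local trivialization $(y,s)$ around $x=(y_0,s_0)$, pick any smooth $\hat H$ on $M$ with $\hat H|_N=0$ near $y_0$ and $\xd\hat H(y_0)=\zb/s_0$; then $H(y,s)=s\,\hat H(y)$ is $1$-homogeneous, vanishes on $\wn$ near $x$, and $\xd H(x)=\za$. Hence $X_H(x)$ realizes an arbitrary element of $(\sT_x\wn)^\zw$, so the $1$-homogeneous Hamiltonians already exhaust the test set, yielding (2). Finally, (3) is immediate from symplectic linear algebra: $\wn$ is Lagrangian iff $(\sT\wn)^\zw=\sT\wn$, equivalently iff $\wn$ is simultaneously coisotropic and isotropic.
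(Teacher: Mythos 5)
Your proof is correct and follows essentially the same route as the paper's: both arguments work on the symplectic cover with $\zvy=s\cdot\zt^*\zh$ in a local trivialization, deduce (1) from the tangency of the Euler direction to $\wn$ (you use $\n$ and $\zvy=i_\n\zw$ where the paper computes with $\pa_s$), prove (2) by the same homogenization device $H(y,s)=s\,\hat H(y)$ showing that $1$-homogeneous Hamiltonians vanishing on $\wn$ already suffice as test functions, and settle (3) by the standard fact that Lagrangian means isotropic plus coisotropic. The only difference is organizational --- you realize an arbitrary covector annihilating $\sT_x\wn$ directly by a homogeneous Hamiltonian, whereas the paper matches the differential of an arbitrary vanishing Hamiltonian with that of a homogeneous one at the given point --- and both treatments gloss the routine bump-function globalization in the same way.
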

\begin{proof}
1. Let $\zh$ be a local contact form on $U\subset M$ such that $C=\ker(\zh)$. We can assume that $P$ over $U$ is trivial and we can write $\zw$ on $\wt{U}=\zt^{-1}(U)$ as
$$\zw(y,s)=\xd s\we\zh(y)+s\cdot\xd\zh(y),$$
where $y\in U$. Assume that $\wn$ is isotropic in $P$. Since $\pa_s$ is tangent to $\wn$, we get that $i_{\pa_s}\zw$ vanishes on $\sT\wn$. But, for $y_0\in N$, the vector space $\sT_{(y_0,s)}\wn$ is spanned by $\pa_s$ and $Y(y_0)\in\sT_{y_0}N$, so $i_{\pa_s}\zw(y_0,s)=\zh(y_0)$. Hence, $\zh(y_0)$ vanishes on $\sT_{y_0}N$. Conversely, if $\zh$ vanishes on $\sT N$, then taking vectors $Y,Y'\in\sT_{y_0}N$  we get
$$\zw\left(a\cdot\pa_s+Y(y_0),b\cdot\pa_s+Y'(y_0)\right)=
a\cdot\zh\left(Y'(y_0)\right)-b\cdot\zh\left(Y(y_0)\right)+\xd\zh\left(Y(y_0),Y'(y_0)\right)=0.$$
This is because if $\zh\,\big|_N=0$, then $\xd\zh\,\big|_N=0$.

\noindent 2. Since contact Hamiltonians are particular Hamiltonians on $P$, and projectable vector fields tangent to $\wn$ project onto vector fields on $M$ which are tangent to $N$, the implication
`$\Rightarrow$' follows easily. Conversely, suppose that the contact vector fields associated with Hamiltonians vanishing on $\wn$ are tangent to $N$. This implies that the Hamiltonian vector fields $X_H$ on $P$ associated with 1-homogeneous Hamiltonians vanishing on $\wn$ are tangent to $\wn$. We have to show that it is true for all Hamiltonians on $P$ vanishing on $\wn$. Since the statement is local, we can assume that $P=\Rt\ti M$ and $\zw(s,y)=\xd s\we\zh(y)+s\cdot\xd\zh(y)$, where $y\in M$. Consider an arbitrary Hamiltonian $H=H(y,s)$ on $P$, $H(y_0,s)=0$ if $y_0\in N$. For a fixed $s_0\in\Rt$, let $\hat H(y)=H(y,s_0)/s_0$. It is a function on $M$, so $H'(y,s)=s\cdot\hat H(y)$ is a 1-homogeneous Hamiltonian on $P$. Let us take $y_0\in N$. Since
$$\xd H'(y_0,s_0)=\hat H(y_0)\xd s+s_0\xd_y\hat H(y_0)=\xd_yH(y_0,s_0)
=\xd H(y_0,s_0),
$$
the vectors $X_H(y_0,s_0)$ and $X_{H'}(y_0,s_0)$ coincide. But the latter is tangent to $\wn$, which completes the proof.

\noindent 3. It follows from the well-known fact that Lagrangian submanifolds in a symplectic manifold can be characterized as being simultaneously isotropic and coisotropic.

\end{proof}
\no Note that $\wn$ being $\Rt$-invariant is actually an $\Rt$-principal bundle itself, $\zt_N:\wn\to N$, where $\zt_N=\zt\,\big|_\wn$, equipped additionally with a 1-homogeneous closed 2-form $\zw\,\big|_\wn$. Here, we understand $\zw\,\big|_\wn$ as $i_\wn^*(\zw)$, where $i_\wn:\wn\hookrightarrow P$ is the canonical inclusion map. We will write simply $\zt$ instead of $\zt_N$, which should not lead to any misunderstanding.

\mn The above concepts can also be formulated in the precontact/presymplectic case, although the concept of Legendrian/Lagrangian submanifolds in the noncontact/nonsymplectic case is used less frequently. For instance, if $\wn$ is a submanifold in a precontact manifold $(P,\zw)$ of rank $2r$, then
\begin{enumerate}
\item $\wn$ is \emph{isotropic} if $\zw\,\big|_\wn=0$, i.e., $\sT\wn+\zq(\zw)\,\big|_\wn\subset(\sT\wn)^\zw$;
\item $\wn$ is \emph{coisotropic} if $(\sT\wn)^\zw\subset\sT\wn+\zq(\zw)\,\big|_\wn$;
\item $\wn$ is \emph{Lagrangian} if it is isotropic and coisotropic, i.e., $(\sT\wn)^\zw=\sT\wn+\zq(\zw)\,\big|_\wn$,
\end{enumerate}
where $(\sT\wn)^\zw$ denotes the 'orthogonal complement' of $\sT\wn$ with respect to $\zw$, i.e., for $x\in\wn$,
$$(\sT\wn)^\zw(x)=\{X\in\sT_xP\,\big|\ \zw(X,Y)=0\quad \text{for all}\quad Y\in\sT_x\wn\}.$$
Since we always have a local symplectic reduction to a symplectic manifold of dimension $2r$, any Lagrangian submanifold $\wn$ is locally projected by the reduction to a Lagrangian submanifold of the reduced symplectic manifold, so the rank $d$ of the intersection $\sT\wn\cap\zq(\zw)\,\big|_\wn$ is locally constant and $\dim{\wn}=r+d$ (cf. {\cite[Definition 2.2]{Guzman:2010}}). For precontact manifolds we have the following analog of Proposition \ref{lag}.
\begin{proposition} Let $N$ be a submanifold of a precontact manifold $(M,C)$. Then,
\begin{enumerate}
\item $N$ is isotropic if and only if $\sT N\subset C\,\big|_N$.
\item $N$ is coisotropic if and only if the contact vector fields $X^c_H$ on $M$, associated with contact Hamiltonians $H:P\to\R$ vanishing on $\wn$, take values in $\sT_yN+\zq(C)(y)$ for $y\in N$.
\item $N$ is a Legendre submanifold if and only if it is simultaneously isotropic nad coisotropic.
\end{enumerate}
\end{proposition}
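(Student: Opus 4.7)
The plan is to follow the outline of Proposition \ref{lag}, adapting each step to account for the characteristic distributions $\zq(\zw)\subset\sT P$ and $\zq(C)\subset\sT M$, which were trivial in the contact case and are now linked by Proposition \ref{zq}. All arguments are local on $M$, so I work in a trivialization $\zt^{-1}(U)\cong U\ti\Rt$ in which $C=\ker(\zh)$ and $\zw=\xd s\we\zh+s\cdot\xd\zh$ for a precontact form $\zh$, with $\wn=\zt^{-1}(N)=(N\cap U)\ti\Rt$.

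Part (1) goes through by the same short computation as in Proposition \ref{lag}. Since $\pa_s$ is tangent to $\wn$, isotropy of $\wn$ forces $i_{\pa_s}\zw=\zh$ to vanish on $\sT N$, giving $\sT N\subset C|_N$; conversely, $\zh|_N=0$ implies $\xd\zh|_N=0$, after which the expansion of $\zw$ vanishes on $\sT\wn$.

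The forward direction of (2) is short: for any $H:P\to\R$ with $H|_\wn=0$, every Hamiltonian vector field $X_H$ satisfies $\zw(X_H,Y)=-\xd H(Y)=0$ for $Y\in\sT\wn$, so $X_H(x)\in(\sT\wn)^\zw(x)\subset\sT_x\wn+\zq(\zw)(x)$ by coisotropy, and projecting via $\sT\zt$ places $X^c_H(y)$ in $\sT_yN+\zq(C)(y)$. The converse needs two moves. First, I extend the hypothesis (stated for $1$-homogeneous $H$) to arbitrary Hamiltonians vanishing on $\wn$, using the approximation device of Proposition \ref{lag}: at $(y_0,s_0)\in\wn$, set $\hat H(y)=H(y,s_0)/s_0$ and form the $1$-homogeneous $H'(y,s)=s\,\hat H(y)$, which still vanishes on $\wn$ (since $\hat H|_N=0$) and has the same differential as $H$ at $(y_0,s_0)$, because $H|_\wn=0$ kills the $\pa_s$-component of $\xd H$ there; hence $X_H(y_0,s_0)$ and the unique $\Rt$-invariant $X_{H'}(y_0,s_0)$ differ only by a characteristic vector. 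Second, writing $X^c_{H'}(y_0)=v+w$ with $v\in\sT_{y_0}N$ and $w\in\zq(C)(y_0)$, I invoke Proposition \ref{zq} to lift $w$ to a unique $\tilde w\in\zq(\zw)(y_0,s_0)$; then $X_{H'}(y_0,s_0)-\tilde w$ projects to $v\in\sT N$ and therefore lies in $\sT\wn$ (which, in the trivialization, is exactly the preimage of $\sT N$ under $\sT\zt$), giving $X_H(y_0,s_0)\in\sT\wn+\zq(\zw)$. To deduce coisotropy of $\wn$ from the extended hypothesis, any $v\in(\sT\wn)^\zw(x_0)$ can be realized as $X_H(x_0)$ for some local $H$ vanishing on $\wn$ with $\xd H(x_0)=-i_v\zw$, so the same argument places $v$ in $\sT\wn+\zq(\zw)$.

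Part (3) is immediate from the definitions: $N$ is Legendre iff $\wn$ is Lagrangian in $P$ iff $\wn$ is simultaneously isotropic and coisotropic, so the claim follows from (1) and (2). The main obstacle is the lifting step in the converse of (2), which is precisely where Proposition \ref{zq} enters to convert $\sT\zt$-projection data on $M$ back into information in $\sT P$; everything else is a routine presymplectic translation of the contact-case proof.
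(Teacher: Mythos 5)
Parts (1) and (3) and the ``only if'' half of (2) are fine and are the routine adaptation of Proposition \ref{lag}. The genuine gap is in the converse of (2), at your realizability step C: you need, for every $x_0\in\wn$ and every $v\in(\sT\wn)^\zw(x_0)$, a Hamiltonian $H$ vanishing on $\wn$ with $\xd H(x_0)=-i_v\zw(x_0)$. In the contact case this is harmless because on a symplectic cover \emph{every} function is a Hamiltonian, and a locally constructed $H$ can be cut off by a bump function; in the presymplectic case a Hamiltonian must in addition be constant on the leaves of $\cF_\zw$, a constraint you never address. A leaf-constant function vanishing on $\wn$ automatically vanishes on the local $\cF_\zw$-saturation of $\wn$, so its differential at $x_0$ is constrained by that saturation and need not sweep out the whole annihilator of $\sT_{x_0}\wn+\zq(\zw)(x_0)$; and multiplying a local Hamiltonian by a bump function destroys leaf-constancy, so there is no passage from local data to the \emph{global} contact Hamiltonians $H:P\to\R$ over which the statement actually quantifies. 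The same global/local problem affects step A: your homogenization $H'(y,s)=s\,\hat H(y)$ is leaf-constant only inside the chosen trivialization (there $\zq(\zw)$ is horizontal), so it is at best a local contact Hamiltonian, to which the hypothesis as stated does not apply.

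This is not a removable technicality: with global Hamiltonians the supply can be so poor that your argument has nothing to work with while coisotropy fails. Take $M=T^2\ti\R^2$ with coordinates $(x,y,q,p)$ and $\zh=\xd y-\za\,\xd x-p\,\xd q$ with $\za$ irrational: this is precontact of rank $3$, $\zq(C)$ is spanned by $\pa_x+\za\,\pa_y$, and the leaves of $\cF_C$ are dense in each torus $T^2\ti\{(q,p)\}$. Consequently every global Hamiltonian on $P=M\ti\Rt$ is a function of $(q,p,s)$ and every global contact Hamiltonian is of the form $s\,h(q,p)$. Let $N$ be the graph $(q,p)=(\sin 2\pi x,\sin 2\pi y)$. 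A smooth $h$ vanishing on the image $[-1,1]^2$ has $\xd h=0$ on all of $[-1,1]^2$, so every global contact Hamiltonian vanishing on $\wn$ has $\xd H=0$ along $\wn$, its Hamiltonian vector fields are characteristic there, and the condition in (2) holds trivially; yet at points of $N$ with $\cos 2\pi x=\cos 2\pi y=0$ one checks directly that $\pa_p\in(\sT\wn)^\zw\setminus\big(\sT\wn+\zq(\zw)\big)$, so $N$ is not coisotropic. Hence the converse of (2) requires either local contact Hamiltonians (together with an argument, absent from your proof, that a leaf-constant function vanishing on $\wn$ with the prescribed differential exists---this is a statement about the local saturation of $\wn$, not pointwise linear algebra) or some additional hypothesis; as written, steps A and C cannot be carried out.
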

%%%%%%%%%%%%%%%%%%%%%%%%%%%%%

\subsection{Constant rank reduction}
For general symplectic reductions, we refer to \cite{Benenti:1983,Benenti:1982,Libermann:1987}. For symplectic reductions by Hamiltonian actions of  Lie groups, to \cite{Abraham:1978,Arnold:1989,Libermann:1987,Marsden:1974,Meyer:1973}.

\no \emph{Constant rank submanifolds} in a symplectic manifold are defined as those submanifolds $\wn$ for which the restriction of the symplectic form to $\wn$ is a closed 2-form of constant rank (recall that we decided to call such 2-forms presymplectic forms). It could suggest that this leads naturally to the definition of constant rank submanifolds of a contact manifold $M$ as those submanifolds $N\subset M$ for which $\wn=\zt^{-1}(N)$ is a constant rank submanifold in $(P,\zw)$, i.e., the closed 2-form $\zw\,\big|_\wn$ is of constant rank.
There is, however, a problem with such a definition, especially if we want to work with reductions, since constant rank $\Rt$-subbundles $\wn$ in symplectic $\Rt$-bundles $(P,\zt,M,h,\zw)$ need not to be presymplectic $\Rt$-bundles themselves. This is because the characteristic distribution of $\zw\,\big|_\wn$ need not be transversal to the fibres of $\zt:\wn\to N$.

\mn Let us start by looking closer at the corresponding characteristic distributions.
Let $(P,\zt,M,h,\zw)$ be a presymplectic cover of a precontact manifold $(M,C)$ of rank $(2r+1)$, with the 2-form $\zn^C:C\ti_MC\to L^C$ on $C$ of rank $2r$, the Euler vector field $\n$, and the Liouville 1-form $\zvy$, so $C=\sT\zt(\ker(\zvy))\subset\sT M$. Let $N$ be a submanifold of $M$.
If we denote $\sT^CN=\sT N\cap C$, then it is clear that the dimension of $\sT^C_yN$ may jump at some points $y\in N$, so generally $\sT^CN$ is not even a smooth distribution.

For $x\in\wn$ and $y\in N$, denote with $\zq(\wn)(x)\subset\sT_xP$ the kernel of $\zw\,\big|_\wn(x)$, and with $\zq(N)(y)\subset\sT M$  the kernel of $\left(\zn^C_N\right)(y)$, where $\zn^C_N=\zn^C\,\big|_{\sT^C N}$. The dimensions of these kernels we denote $k(\wn)(x)$ and $k(N)(y)$, respectively. We know that if a 1-form $\zh$ generates the precontact structure $C=\ker(\zh)$ locally on $U\subset M$, then we have a local trivialization $\wt U=U\ti\Rt$ in which $\zw=\zw_\zh$.
From Proposition \ref{zq} we know that $\sT_x\zt$ maps $\zq(\wn)(x)$ onto $\zq(N)(y)$, where $y=\zt(x)$, so $k(\wn)(x)\ge k(N)(y)$ and these dimensions are equal if and only if $\zh\,\big|_N(y)\ne 0$. The latter is equivalent to $\sT_yN\nsubseteq C_y$, and to the fact that $\zw_\wn$ has no vertical characteristic vectors (the characteristic distribution is transversal to the fibers of $\zt$). A submanifolds $N$ of $M$ we will call \emph{transversal} if $\sT_yN\nsubseteq C_y$ for all $y\in N$. In general it could be $k(\wn)(x)=k(N)(y)+1$ even if $k(\wn)$ is constant or $k(N)$ is constant, which shows that we cannot work with constant rank submanifolds in the traditional meaning.
\begin{example}\label{e1}
Consider the contact form $\zh=\xd z-p\xd q$ on $M=\R^3$ and the submanifold $N\subset M$
defined by the equation $z=0$, so the tangent bundle $\sT N$ is spanned by vector fields $\pa_{q},\pa_{p}$ and the contact form restricted to $N$ reads $\zh_N=-p\xd q$. Hence $\xd(\zh_N)=\xd q\we\xd p$ and $\sT^C_{(p,q)}N$ is spanned by $\pa_p$ if $p\ne 0$, and  $\sT^C_{(0,q)}N=\sT_{(0,q)}N$. Consequenlty, $\zq(N)(p,q)$ is spanned by $\pa_p$ if $p\ne 0$, and is trivial if $p=0$, so the generalized distribution $\zq(N)$ is not regular. However, $\wn$ is of constant rank. Indeed, the submanifold $\wn$ on $P=M\ti\Rt$ with adapted coordinates $(z,p,q,s)$ is defined by the same equation $z=0$. Since the symplectic form reads
$$\zw=\xd s\we\zh+s\cdot\xd\zh=\xd s\we\xd z+\xd q\we(s\xd p+p\xd s),$$
its restriction to $\wn$ is
$$\zw\,\big|_\wn=-p\,\xd s\we\xd q+s\,\xd q\we\xd p=\xd q\we(s\xd p+p\xd s).$$
Its kernel is generated by $s\pa_s-p\pa_p$. It is a nowhere vanishing vector field on $\wn$, so $\zq(\wn)$ is of constant rank 1. Actually, $\wn$ is coisotropic. Indeed, the vector field $X(p,q,s)=s\pa_s-p\pa_p$ is the restriction to $\wn$ of the vector field $s\pa_s-p\pa_p-z\pa_z$ which is the Hamiltonian vector field for the 1-homogeneous Hamiltonian $H(z,p,q,s)=s\cdot z$. Note that $X(0,q,s)=s\pa_s$ is vertical, so $\zq(\wn)(0,q,s)$ is the vertical part of the tangent space $\sT_{(0,q,s)}\wn$.
\end{example}
\begin{example} Consider the standard contact form $\zh=\xd z-p_1\xd q^1-p_2\xd q^2$ on $M=\R^5$. Let a submanifold $N$ in $M$ be defined by the equations $z=0, q^1=q^2$, so we can take $(p_1,p_2,q=q^1+q^2)$ as coordinates on $N$ in which the contact form restricted to $N$ reads $\zh_N=-(p_1+p_2)\xd q$. Hence, $\sT^C_{(p_1,p_2,q)}N$ is spanned by $\pa_{p_1},\pa_{p_2}$ if $p_1+p_2\ne 0$ and equals $\sT N$ if $p_1+p_2=0$. Since $\xd(\zh_N)=\xd q\we\xd p_1+\xd q\we\xd p_2$, the kernel of $\xd(\zh_N)$ as a bilinear form on $\sT^CN$ is in both cases spanned by $\pa_{p_1},\pa_{p_2}$  (cf. (\ref{zn})), so $N$ is of constant rank.
\end{example}
\noindent The above examples  show that even if $\wn$ is of constant rank in $(P,\zw)$, the distribution $\zq(N)\subset\sT N$ can be of non-constant rank, and \emph{vice versa}. As Example \ref{e1} shows, even coisotropic submanifolds can be non-transversal. This is, of course, a very bad situation if we want to use this distribution for a reduction, and suggests  the following definition.
\begin{definition} A submanifold $N$ of a contact manifold $(M,C)$ is of \emph{constant rank} if $N$ is transversal and the characteristic distribution $\zq(N)$ is of constant rank.  An $\Rt$-subbundle $\zt:\wn\to N$ of a symplectic $\Rt$ principal bundle $\zt:P\to M$ we call \emph{constantly transversal} if the characteristic distribution $\zq(\wn)\subset\sT\wn$ has constant rank and it is transversal to the fibers of $\zt$ (has no vertical vectors).
\end{definition}
\no The following is now obvious.
\begin{proposition}
Let $(P,\zt,M,h,\zw)$ be a symplectic cover of a contact manifold $(M,C)$, and let $N$ be a submanifold of $M$. Then $N$ is of constant rank if and only $\wn=\zt^{-1}(N)$ is constantly transversal. This is equivalent to the fact that $(N,\sT^CN)$ is a precontact manifold of rank $(2r+1)$, and to the fact that $(\wn,\zt,N,h,\zw\,\big|_\wn)$ is its presymplectic $\Rt$-bundle of rank $2(r+1)$, where $2r=\dim(N)-k(N)$. In this case, the characteristic distributions $\zq(N)$ and $\zq(\wn)$ are regular and involutive, so they define the corresponding foliations $\cF_N$ and $\cF_\wn$, respectively.
\end{proposition}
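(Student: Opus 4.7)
The plan is to reduce the statement to an explicit local computation on a trivialization of the symplectic cover. Any $y\in N$ lies in a neighbourhood $U\subset M$ on which $\zt^{-1}(U)\simeq U\ti\Rt$ with $\zw=\xd s\we\zh+s\cdot\xd\zh$ for a local contact form $\zh$ representing $C$, and in these coordinates $\wn=N\ti\Rt$ is an $\Rt$-saturated subbundle. Since all four conditions in the proposition are encoded pointwise in the kernels of $\zw|_\wn$ and $\zn^C_N$, it suffices to perform one careful local identification and then assemble the pointwise statements.

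The central calculation is to describe $\zq(\wn)(y,s)$ explicitly. Writing a tangent vector at $(y,s)\in\wn$ as $Y+a\,\pa_s$ with $Y\in\sT_yN$, and pairing against an arbitrary $Y'+b\,\pa_s\in\sT_{(y,s)}\wn$, one obtains
\[
\zw|_\wn(Y+a\,\pa_s,\, Y'+b\,\pa_s) \;=\; a\,\zh(Y')-b\,\zh(Y)+s\,\xd\zh(Y,Y').
\]
Requiring this to vanish for every $Y'\in\sT_yN$ and every $b\in\R$ first forces $Y\in\sT^C_yN$, and then imposes the residual condition that the covector $a\,\zh+s\,i_Y\xd\zh$ vanishes on $\sT_yN$. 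The dichotomy on transversality is now transparent. If $\sT_yN\subset C_y$ then $\zh$ annihilates $\sT_yN$, $a$ is free, and $\pa_s\in\zq(\wn)(y,s)$, producing a vertical direction in the characteristic distribution. If instead $\sT_yN\not\subset C_y$, then picking $Y_0\in\sT_yN$ with $\zh(Y_0)=1$, evaluation at $Y'=Y_0$ pins down $a=-s\,\xd\zh(Y,Y_0)$ uniquely, while the remaining requirement at $Y'\in\sT^C_yN$ reduces to $\xd\zh(Y,Y')=0$, i.e., $Y\in\ker\bigl(\zn^C_N(y)\bigr)=\zq(N)(y)$, using the identity $\zn^C=-\xd\zh$ on $C$ from (\ref{zn}). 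Hence $\sT_{(y,s)}\zt$ restricts to a linear isomorphism $\zq(\wn)(y,s)\to\zq(N)(y)$ precisely at transversal points.

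From here the four equivalences unwind. Transversality of $N$ is exactly the absence of vertical characteristic vectors on $\wn$, which is the transversality-to-fibres clause in the presymplectic $\Rt$-bundle axioms; in the transversal regime, the pointwise isomorphism $\zq(\wn)\simeq\zq(N)$ forces $k(\wn)$ and $k(N)$ to be constant together, which is in turn equivalent to $\zn^C_N$ having constant rank on the hyperplane field $\sT^CN$ and to $\zw|_\wn$ being presymplectic, with the two ranks differing by exactly $2$ since $\dim\wn=\dim N+1$ and the two kernels agree in dimension. The $\Rt$-action and 1-homogeneity on $\wn$ come for free from $\Rt$-saturation and restriction, so $(\wn,\zt,N,h,\zw|_\wn)$ satisfies all the axioms of a presymplectic $\Rt$-bundle, and involutivity of the regular $\zq(N)$ and $\zq(\wn)$ then falls out of Proposition~\ref{p1} applied to the precontact structure on $N$ and Proposition~\ref{zvy}(2) applied to the presymplectic $\Rt$-bundle on $\wn$.

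The one point that deserves care, which I regard as the main obstacle, is confirming that the fibrewise isomorphism $\zq(\wn)\to\zq(N)$ assembles into an isomorphism of \emph{smooth} distributions and not merely into a pointwise coincidence of dimensions; this is transparent from the closed formula $a=-s\,\xd\zh(Y,Y_0)$ on each local trivialization, and the patching of local trivializations—which changes $\zh$ by a nonvanishing conformal factor but leaves $\zq(N)$ unaffected—then yields the global compatibility of the local identifications.
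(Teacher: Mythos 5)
Your proposal is correct and follows essentially the paper's own route: the paper treats this proposition as immediate from the preceding local analysis (Proposition \ref{zq} applied in a trivialization where $\zw\,\big|_\wn=\zw_{\zh|_N}$), and your explicit determination of $\zq(\wn)(y,s)$ reproduces exactly that pointwise dichotomy --- vertical characteristic vectors precisely over non-transversal points, and a $\sT\zt$-isomorphism $\zq(\wn)(y,s)\simeq\zq(N)(y)$ at transversal ones --- from which the four equivalences and the regularity/involutivity claims assemble just as you describe. Note that your rank bookkeeping (the two ranks differ by exactly $2$, i.e.\ $2r=\dim N-k(N)-1$) is the consistent one, which indicates that the formula $2r=\dim(N)-k(N)$ in the statement is an off-by-one slip rather than a defect of your argument.
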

\no The foliations $\cF_N$ and $\cF_\wn$ we call the \emph{characteristic foliations} of $N$ and $\wn$, respectively. Theorem \ref{main} immediately implies the following result on contact constant rank reduction.
\begin{theorem}[contact constant rank reduction]
Let $(P,\zt,M,h,\zw)$ be a symplectic cover of a contact manifold $(M,C)$, and let $N$ be a constant rank submanifold of $M$ such that its characteristic foliation $\cF_N$ is simple, with the canonical submersion $p_0:N\to N_0=N/\cF_N$. Then $(N,\sT^CN)$ is a precontact manifold, $p_0$ is a precontact-to-contact reduction, and $\wn=\zt^{-1}(N)$ is a presymplectic $\Rt$-subbundle of  $(P,\zt,M,h,\zw)$, with all structures inherited from the latter. Moreover, the characteristic foliation $\cF_\wn$ is also simple, with the canonical submersion $p:\wn\to\wn_0=\wn/\cF_\wn$, and we have the commutative diagram
$$
\xymatrix@C+50pt{
\wn \ar[r]^{p}\ar[d]_{\zt} & \wn_0\ar[d]^{\zt_0} \\
N \ar[r]^{p_0} & N_0\,,}
$$
in which the right-hand side is a symplectic $\Rt$-bundle  and the horizontal maps form a symplectic reduction of the presymplectic $\Rt$-bundle $\wn$.
\end{theorem}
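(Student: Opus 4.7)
The plan is to assemble the theorem directly from the preceding proposition together with Theorem \ref{main1} (or, equivalently, the contact-case specialization of Theorem \ref{main}), so almost no new work is required.

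First, I would unpack the preceding proposition: constant rank of $N$ is equivalent to $(N,\sT^C N)$ being a precontact manifold of rank $(2r+1)$ (where $2r=\dim N-k(N)$) and to $(\wn,\zt,N,h,\zw\,\big|_\wn)$ being its presymplectic $\Rt$-bundle of rank $2(r+1)$, with $\zq(N)$ and $\zq(\wn)$ both regular and involutive. This already supplies two of the advertised conclusions -- that $(N,\sT^C N)$ is precontact and that $\wn$ is its presymplectic $\Rt$-subbundle with structures inherited from $(P,\zw)$ -- and, via Theorem \ref{cr} applied to $(N,\sT^C N)$, makes $p_0:N\to N_0$ into the associated precontact-to-contact reduction with reduced contact structure $C_0=\sT p_0(\sT^C N)$.

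Next I would invoke Theorem \ref{main1} on the presymplectic cover $\wn\to N$, choosing $\zq_P=\zq(\wn)$ and $\zq_M=\zq(N)$ as the full characteristic distributions; that these correspond under $\sT\zt$ is part of the content of Theorem \ref{main1} (and consistent with the local computation of Proposition \ref{zq}). Simplicity of $\cF_N$, which is a hypothesis, is then equivalent to simplicity of $\cF_\wn$, and the theorem produces the reduced presymplectic $\Rt$-bundle $(\wn_0,\zt_0,N_0,h^0,\zw_0)$ of rank $2(r+1)$ together with a submersive morphism of $\Rt$-bundles
\[
\xymatrix@C+30pt{
\wn\ar[r]^{p}\ar[d]_{\zt} & \wn_0\ar[d]^{\zt_0}\\
N\ar[r]^{p_0} & N_0
}
\]
satisfying $h^0_s\circ p=p\circ h_s$ and $\zw\,\big|_\wn=p^*(\zw_0)$. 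These are precisely the identities demanded of a symplectic reduction of the presymplectic $\Rt$-bundle $\wn$.

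It then remains only to upgrade ``presymplectic $\Rt$-bundle'' to ``symplectic $\Rt$-bundle'' on the right-hand side, which is a pure dimension count: $\dim N_0=\dim N-(\dim N-2r-1)=2r+1$ (matching the precontact rank, so $N_0$ is honestly contact), and correspondingly $\dim\wn_0=2r+2$, on which the rank-$2(r+1)$ closed $2$-form $\zw_0$ is automatically nondegenerate. I do not foresee any real obstacle: every nontrivial input -- the precontact/presymplectic identification of $(N,\wn)$, simplicity transfer, existence and properties of the reduced symplectic $\Rt$-bundle -- is already available in the preceding proposition, in Theorem \ref{cr}, and in Theorem \ref{main1}; the only point that warrants care is the matching of characteristic foliations under $\sT\zt$, which is exactly the bijection asserted in Theorem \ref{main1}.
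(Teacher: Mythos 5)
Your proposal is correct and follows essentially the same route as the paper, which deduces the theorem directly from the preceding proposition (constant rank of $N$ $\Leftrightarrow$ $(N,\sT^CN)$ precontact with presymplectic $\Rt$-cover $\wn$) together with the symplectic reduction theorem for presymplectic $\Rt$-bundles. The only cosmetic difference is that you pass through Theorem \ref{main1} and then upgrade the reduced bundle to a symplectic one by a dimension count, whereas the paper invokes Theorem \ref{main} directly, which already produces a symplectic $\Rt$-bundle over the contact quotient, so that final step is not needed.
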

%%%%%%%%%%%%%%%%%%%%%%%%%%%%%%%%%%%%%%%%%
\section{Contact Marsden-Weinstein-Meyer reduction}
\subsection{Precontact moment maps}
Let $(P,\zt,M,h,\zw)$ be a presymplectic cover of a precontact manifold $(M,C)$ of rank $(2r+1)$, and let
$$\zr:G\ti M\to M,\quad \zr(g,y)=g^c(y),$$
be an action of a  Lie group $G$ on $M$ by contactomorphisms. This induces a homomorphism $\zx\mapsto\zx^c$ of the Lie algebra $\g$ of $G$ into the Lie algebra of contact vector fields on $M$, where $\zx^c$ is the fundamental vector field associated with the action $\zr$ and $\zx\in\g$. The group action $\zr$ is covered by an action (see Corollary \ref{cH})
$$\wt\zr:G\ti P\to P, \quad \wt\zr(g,x)=\wt g(x)$$
of $G$ on $P$ by automorphisms of the presymplectic $\Rt$-bundle structure, i.e.,
$$ (\wt g)^*(\zw)=\zw,\quad h_s\circ\wt g=\wt g\circ h_s,\quad \zt\circ\wt g=g^c.$$
Moreover, any contact vector field $\zx^c$ is covered by a unique $\Rt$-invariant Hamiltonian vector field $\hzx$ on $P$, with 1-homogeneous Hamiltonian $H_\zx=i_\hzx\zvy$ on $P$. Therefore, we do not have the problems with defining the moment map, coming from non-uniqueness
of Hamiltonians in the standard symplectic setting.

\mn Let $\hat\g$ be the Lie algebra of vector fields spanned by $\hzx$ (the infinitesimal action of $G$). It is easy to see that $\hat\g(x)$ is the tangent space to the $G$-orbit $G.x=\{\wt g(x)\,\big|\,g\in G\}$ at $x\in P$,
$$\sT_x(G.x)=\hat\g(x)=\{\hzx(x)\,\big|\, \zx\in\g\}.$$
Of course, $\zt:P\to M$ maps $G$-orbits in $P$ onto $G$-orbits in $M$.
We define the map
$$J:P\to\g^*\,,\quad \La J(x),\zx\Ra=H_\zx(x)=(i_\hzx\zvy)(x),$$
which we will call the \emph{contact moment map} associated with the action $\zr$ of $G$ on $M$ by contactomorphisms.
Actually, in the context of moment maps and reductions, only the Lie algebra of vector fields $\hat\g$ (a $\g$-action) is needed if $G$ is connected.
Note that, in contrast to the majority of existing literature (cf. \cite{Albert:1998,deLeon:2019,Geiges:2001,Lerman:2001,Willet:2002}), our contact moment map is defined on $P$, not on $M$. This moment map is known to be $\Ad$-equivariant. The equivariance means that
$$J(\wt g(x))=\Ad^*_g(J(x))=(\Ad_{g^{-1}})^*(J(x)),$$
so
$$H_\zx(\wt g(x))=H_{\Ad_{g^{-1}}(\zx)}(x).$$
In our case, the moment map is not only equivariant, but it intertwines additionally the corresponding $\Rt$-actions,
\be\label{rt} J(h_s(x))=s\cdot J(x).\ee
Indeed,
$$\La  J(h_s(x)),\zx\Ra=H_\zx(h_s(x))=s\cdot H_\zx(x)=\La s\cdot J(x),\zx\Ra,$$
since $H_\zx$ is 1-homogeneous. For any vector subspace $V\subset\sT_xP$, with $V^\zw$ we denote the 'orthogonal complement' of $V$ with respect to $\zw$, i.e.,
$V^\zw(x)=\{X\in\sT_xP\,\big|\ \zw(X,V(x))=0\}$.
%%%%%%%%%%%%%%%%%%%%%%%%%%%%%%%%%%%%%%%%%%%%
\begin{proposition}
Let $K(J)\subset\sT P$ be the kernel of $\sT J$. Then
\be\label{wr1}
K(J)(x)=\bigcap_{\zx\in\g}\ker(\xd H_\zx(x))=(\hat\g(x))^\zw.
\ee
In particular, $\zq(\zw)\subset K(J)$, so the moment map $J$ is constant along the leaves of the characteristic foliation $\cF_\zw$. In the case when the foliation $\cF_\zw$ (equivalently, $\cF_C$) is simple, the map $J$ induces the moment map $J^0:P_0\to\g^*$ for the induced $G$-action (the $\g$-action is enough) on the reduced symplectic $\Rt$-bundle $P_0=P/\cF_\zw$ (cf. Proposition \ref{action}) by $J^0(p(x))=J(x)$, where $p:P\to P_0$ is the canonical submersion.
\end{proposition}
\begin{proof}
Let $X\in K(J)(x_0)$, and let $\zg:\R\to P$ be a smooth curve representing $X$, i.e., $\zg(0)=x_0$ and $\dot\zg(0)=X$. For any $\zx\in\g$ we have
$$0=\La(\sT_{x_0} J)(X),\zx\Ra=\frac{\xd}{\xd t}\,\Big|_{t=0}\La J(\zg(t)),\zx\Ra=\frac{\xd}{\xd t}\,\Big|_{t=0}H_\zx(\zg(t))
=i_X(\xd H_\zx(x_0))=i_{\hzx(x_0)}\,i_X\zw.
$$
The rest is obvious.
\end{proof}
\no The above proposition shows that, in the case of simple $\cF_C$ (e.g, if $C$ is a contact structure), we can reduce our considerations to the standard Marsden-Weinstein-Meyer Hamiltonian reduction.
\subsection{The precontact Marsden-Weinstein-Meyer theorem}
Our construction will be modelled on the standard Marsden-Weinstein-Meyer reduction, with some necessary adaptation to the case of presymplectic $\Rt$-bundles. We will assume that $G$ is connected. Let us fix $\zm\in\g^*$. The inverse image $P_\zm=J^{-1}(\zm)$, considered in the standard approach, is $\Rt$-invariant only if $\zm=0$, so for $\zm\ne 0$ we consider
$$P_{[\zm]}=J^{-1}([\zm]^\ti)=\bigcup_{s\ne 0}h_s(P_\zm),$$
which is the smallest $\Rt$-subbundle in $P$ containing $P_\zm$. Here, $[\zm]^\ti=\{s\zm\,\big|\,s\ne 0\}$. Of course, $P_{s\zm}=h_s(P_\zm)$ and $P_{[s\zm]}=P_{[\zm]}$ for $s\ne 0$ (cf. (\ref{rt})).

We say that $\zm$ is a \emph{weakly regular value} of $J$ if $P_\zm$ is a submanifold and for every $x\in P_\zm$ we have $K(J)(x)=\ker(\sT_xJ)=\sT_xP_\zm$. Of course, due to (\ref{rt}), $\zm$ is a weekly regular value if and only if $s\zm$ is a weakly regular value, where $s\ne 0$, and any regular value is also weakly regular. In what follows, we always assume that $\zm$ is a weakly regular value. Then $\sT_xP_\zm=K(J)(x)$ and (\ref{wr1}) implies that all Hamiltonians $H_\zx$, $\zx\in\g$, are constant on $P_\zm$, thus on all $P_{s\zm}$, $s\in\Rt$. Moreover, $P_{[\zm]}$ is an $\Rt$-bundle over $M_\zm=\zt(P_{[\zm]})$, and, due to the equivariance of the moment map, the subgroup
$$G_\zm=\{g\in G\,\big|\ \Ad_g^*(\zm)=\zm\}$$
of $G$ acts on $P_\zm$. Denote its Lie algebra with $\g_\zm$, and the distribution spanned by $\hat\zx$ for $\zx\in\g_\zm$ with $\hat\g_\zm$. Since we are interested in the submanifold $P_{[\zm]}$ rather than $P_\zm$, we should consider the subgroup
$$G_{[\zm]}=\{g\in G\,\big|\ \Ad_g^*(\zm)\in[\zm]^\ti\}.$$
It leaves the submanifold $P_{[\zm]}$ invariant and, clearly, $G_{[\zm]}=G_\zm$ if and only if the coadjoint orbit $\cO_\zm$ of $\zm$ intersects $[\zm]^\ti$ only at the point $\zm$. The subgroup $G_{[\zm]}$ is closed in $G$, so it is a Lie subgroup. Denote with
$$\g_{[\zm]}=\{\zx\in \g\,\big|\ \ad_\zx^*(\zm)\in[\zm]\}$$
its Lie algebra. Here, $[\zm]=\R\cdot\zm\subset\g^*$.

\mn Suppose that $\zm\ne 0$. In this case the submanifold $P_{[\zm]}$ of $P$ is foliated by 1-codimensional submanifolds $P_{s\zm}$, $s\ne 0$, which are transversal to the $\zt$-fibers, so $\zt(P_{[\zm]})=\zt(P_\zm)=M_\zm$ is a submanifold in $M$, and the subgroup $G_{[\zm]}$ acts on $M_\zm$. We will describe the characteristic distribution of the submanifold $P_{[\zm]}$, i.e., the characteristic distribution $\zq\big(\zw_{[\zm]}\big)$,
where $\zw_{[\zm]}$ is the restriction of $\zw$ to $P_{[\zm]}$. We have
$$\sT P_{[\zm]}=\sT P_\zm\oplus [\n],$$
where $[\n]$ is the line subbundle of vertical vectors in $\sT P_{[\zm]}$, thus generated by the Euler vector field $\n$. Hence,
$$\big(\sT_xP_{[\zm]}\big)^\zw=\big(\sT_xP_\zm\big)^\zw\cap\big(\n(x)\big)^\zw.$$

\mn We know that, for $x\in P_\zm$, the tangent space $\sT_x P_\zm$
equals $K(J)(x)=(\hat\g(x))^\zw$ (see (\ref{wr1})). Assume first that $\zw$ is symplectic, so
$(\sT_xP_\zm)^\zw=\hat\g(x)$. Moreover, $[\n(x)]^\zw=\ker\big(\zvy(x)\big)$, and for $x\in P_{[\zm]}$ we have $$\hat\zx(x)\in\ker\big(\zvy(x)\big)\ \Leftrightarrow\ H_\zx(x)=0\ \Leftrightarrow\ \La J(x),\zx\Ra=0\
\Leftrightarrow\ \zx\in\ker(\zm).
$$
Moreover, $\hat\zx(x)\in\sT_xP_{[\zm]}$ if and only if $\sT J\big(\hat\zx(x)\big)\in[\zm]$. Putting $J(x)=s\zm$, $s\ne 0$, we get
$$\sT J\big(\hat\zx(x)\big)=\frac{\xd}{\xd t}\,\Big|_{t=0}J\big(\wt{\exp(t\zx)}(x)\big)=\frac{\xd}{\xd t}\,\Big|_{t=0}\Ad^*_{\exp(t\zx)}(s\zm)=s\cdot\ad^*_\zx(\zm),
$$
so $\hat\zx(x)\in\sT_xP_{[\zm]}$ if and only $\zx\in\g_{[\zm]}$. Consequently,
$$\zq(\zw_{[\zm]})(x)=\hat\g^0_{[\zm]}(x),$$
where
$$\g^0_{[\zm]}=\{\zx\in \g_{[\zm]}\,\big|\ \zx\in\ker(\zm)\}.$$

\mn It is easy to see that $\g^0_{[\zm]}$ is an ideal in the Lie algebra $\g_\zm$. Indeed, $\zx\in\g_{[\zm]}$ if and only if $\ad^*_\zx(\zm)\in[\zm]$. Hence, for $\zx\in\g_{[\zm]}$, $\zx'\in\g^0_{[\zm]}$ we have
$$0=\La\ad^*_\zx(\zm),\zx'\Ra=-\La\zm,[\zx,\zx']\Ra,$$
i.e., $[\g_{[\zm]},\g^0_{[\zm]}]\subset\g^0_{[\zm]}$.
The corresponding connected normal Lie subgroup in $G_{[\zm]}$ we denote $G^0_{[\zm]}$. We conclude that, in the case of symplectic $\zw$, we have $\zq(\zw_{[\zm]})=\hat\g^0_\zm$. In the general pre-contact case,
$$\zq(\zw_{[\zm]})=\hat\g^0_{[\zm]}+\zq(\zw).$$

\mn Let us pass to the case $\zm=0$. Now, $P_0=P_{[0]}$ is automatically an $\Rt$-subbundle in $P$
over a certain submanifold $M_0=\zt(P_0)$ of $M$, and , and $G_0=G_{[0]}=G$. Similarly to above, we have
$$\zq(\zw_0)(x)=\hat\g(x)+\zq(\zw)(x).$$
Summarizing these observations, we get the following.
\begin{theorem}\label{m1} For any weekly regular value $\zm\in\g^*$ of the moment map $J$, the characteristic distribution $\zq(\zw_{[\zm]})$ of the closed 2-form $\zw_{[\zm]}=\zw\,\big|_{P_{[\zm]}}$ on $P_{[\zm]}$ is
$$ \zq(\zw_{[\zm]})=\hat\g^0_{[\zm]}+\zq(\zw),$$
and the characteristic distribution $\zq(C_\zm)$ of the field of hyperplanes $C_\zm=C\cap\sT M_\zm$ on $M_\zm$ is $\sT\zt\left(\zq(\zw_{[\zm]})\right)$ and equals
$$ \zq(C_\zm)=(\g^0_{[\zm]})^c+\zq(C).$$

\mn In particular, if the distribution $\hat\g^0_{[\zm]}+\zq(\zw)$ (equivalently, $(\g^0_{[\zm]})^c+\zq(C)$) is of constant rank on $P_{[\zm]}$ (resp., $M_\zm$), then $\big(P_{[\zm]},\zt,M_\zm,h,\zw_{[\zm]}\big)$ is a presymplectic $\Rt$-bundle covering the precontact manifold $(M_\zm,C_\zm)$.
\end{theorem}
\begin{remark} In \cite[Section 5]{deLucas:2025}, the authors observed that $G_{[\zm]}$ can be bigger  than $G_\zm$, so our reduction in \cite{Grabowska:2023} is valid under the additional assumption $G_{[\zm]}=G_\zm$. The latter is valid, e.g., for compact $G$.
\end{remark}
\no Under the assumptions of Theorem \ref{m1}, we can carry out the presymplectic reduction (Theorem  \ref{main1}), provided that the action of $G_{[\zm]}^0$ on $M_\zm$ is free and proper. In this case, the orbits of the subgroup $G^0_{[\zm]}$ on $M_\zm$ (resp., on $P_{[\zm]}$) form a regular simple foliation, spanned by $(\g^0_{[\zm]})^c$ (resp., $\hat\g^0_{[\zm]}$). The characteristic foliation $\zq(C_\zm)$ (resp., $\zq(\zw_{[\zm]})$) is then of constant rank if and only if the intersection $(\g^0_{[\zm]})^c\cap\zq(C)$ (equivalently, $\hat\g^0_{[\zm]}\cap\zq(\zw)$) is of constant rank, e.g., $\zq(C)\subset (\g^0_{[\zm]})^c$ (resp., $\zq(\zw)\subset\hat\g^0_{[\zm]}$). The latter condition is automatically satisfied for $C$ being contact (resp., $\zw$ being symplectic).
Therefore, we can formulate the following precontact/presymplectic version of the Marsden-Weinstein-Meyer theorem.
\begin{theorem}\label{MAIN}  Let $(P,\zt,M,h,\zw)$ be a presymplectic cover of a precontact manifold $(M,C)$, let $\zr:G\ti M\to M$ be an action on $M$ of a Lie group $G$ by contactomorphisms, and let $\wt\zr:G\ti P\to P$ be the Hamiltonian cover of this action, associating with every element $\zx$ of the Lie algebra $\g$ of $G$ the contact vector field $\zx^c$ on $(M,C)$ and the Hamiltonian vector field $\hzx$ on $(P,\zw)$.

\mn Let $J:P\to\g^*$ be the corresponding contact moment map, and let $\zm\in\g^*$ be a weakly regular value of $J$, so that $P_{[\zm]}=J^{-1}([\zm]^\ti)$ is an $\Rt$-subbundle of $P$, covering a submanifold $M_\zm=\zt(P_{[\zm]})$ of $M$.  Then the connected Lie subgroup $G_{[\zm]}^0$ of $G$, corresponding to the Lie subalgebra
$$\g_{[\zm]}^0=\{\zx\in \ker(\zm)\,\big|\,\ad^*_\zx(\zm)\in[\zm]\}$$
of $\g$, acts on $P_{[\zm]}$ and $M_\zm$.

\mn Suppose that the $G^0_{[\zm]}$-action on $M_\zm$ is free and proper, so that we have a canonical submersion $\zp:M_\zm\to M(\zm)$ onto the orbit manifold $M(\zm)=M_\zm/G^0_{[\zm]}$. Suppose additionally that the distribution $(\g^0_{[\zm]})^c\cap\zq(C)$ (equivalently, $\hat\g^0_{[\zm]}\cap\zq(\zw)$) is regular (e.g, $C$ is contact/$\zw$ is symplectic).

\mn Then $M(\zm)$ is canonically a precontact manifold with the precontact structure $C(\zm)=\sT\zp(C\cap\sT M_\zm)$. Moreover, the $G^0_{[\zm]}$-action on $P_{[\zm]}$ is also free and proper, so that we have a canonical submersion  $p:P_{[\zm]}\to P(\zm)$ onto the orbit manifold $P(\zm)=P_{[\zm]}/G^0_{[\zm]}$, and $P(\zm)$ carries a presymplectic $\Rt$-bundle structure
$(P(\zm),\zt_\zm,M(\zm),h^\zm,\zw(\zm))$ such that
$$\xymatrix@C+50pt{
P_{[\zm]} \ar[r]^{p}\ar[d]_{\zt} & P(\zm)\ar[d]^{\zt_\zm} \\
M_\zm \ar[r]^{\zp} & M(\zm)}
$$
is a morphism of $\Rt$-bundles, and $p^*(\zw(\zm))=\zw\,\big|_{P_{[\zm]}}$.
Moreover, $C(\zm)$ is a contact structure (resp., $\zw(\zm)$ is symplectic) if only $C$ is contact (resp., $\zw$ is symplectic).
\end{theorem}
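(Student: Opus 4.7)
The plan is to reduce the theorem to a single application of Theorem \ref{main1}, with $\zq_P = \hat\g^0_\zm$ and $\zq_M = (\g^0_\zm)^c$. Two preliminary tasks are required: first, verifying that $(P_{[\zm]}, \zt, M_\zm, h, \zw_{[\zm]})$ is genuinely a presymplectic cover of the precontact manifold $(M_\zm, C_\zm)$ with $C_\zm = C \cap \sT M_\zm$; second, checking that the $G^0_\zm$-orbit distributions meet the hypotheses of Theorem \ref{main1} and the associated orbit foliations are simple.

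For the first task I would invoke Theorem \ref{m1} directly. Weak regularity of $\zm$ yields $\sT_xP_\zm = K(J)(x) = (\hat\g(x))^\zw$, and hence the identifications $\zq(\zw_{[\zm]}) = \hat\g^0_\zm + \zq(\zw)$ and $\zq(C_\zm) = (\g^0_\zm)^c + \zq(C)$. The transversality hypothesis on $M_\zm$ guarantees that $\zq(\zw_{[\zm]})$ contains no vertical vectors, so its characteristic distribution is transversal to the fibres of $\zt$. Since $(\g^0_\zm)^c$ is regular (freeness of the $G^0_\zm$-action on $M_\zm$), $\zq(C)$ is regular (precontact hypothesis), and their intersection is regular by assumption, the inclusion--exclusion dimension identity yields constant rank of the sum; thus Theorem \ref{m1} produces the desired presymplectic $\Rt$-bundle cover of $(M_\zm, C_\zm)$.

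For the second task, I would verify that $\hat\g^0_\zm$ and $(\g^0_\zm)^c$ are involutive (they come from Lie algebra actions), regular (freeness), satisfy $\sT\zt(\hat\g^0_\zm) = (\g^0_\zm)^c$ and the inclusions $\hat\g^0_\zm \subset \zq(\zw_{[\zm]})$, $(\g^0_\zm)^c \subset \zq(C_\zm)$ from Theorem \ref{m1}, and that $\hat\g^0_\zm$ is $\Rt$-invariant because the $G^0_\zm$-action commutes with the $\Rt$-action. Simplicity of both orbit foliations follows from the hypothesis that the $G^0_\zm$-action on $M_\zm$ is free and proper: freeness on $P_{[\zm]}$ lifts immediately through $\zt$ (a fixed point upstairs would project to a fixed point downstairs), and properness on $P_{[\zm]}$ follows from Borel's criterion since $(K\big| K) \subset (\zt(K)\big| \zt(K))$ for compact $K \subset P_{[\zm]}$, the latter being compact and the former being closed (a standard limit-point argument using continuity of the action).

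Theorem \ref{main1} then produces the presymplectic $\Rt$-bundle $(P(\zm), \zt_\zm, M(\zm), h^\zm, \zw(\zm))$ covering the precontact manifold $(M(\zm), C(\zm))$ with $C(\zm) = \sT\zp(C_\zm) = \sT\zp(C \cap \sT M_\zm)$, the commutative diagram of $\Rt$-bundle morphisms, and the identity $p^*(\zw(\zm)) = \zw\,\big|_{P_{[\zm]}}$. For the final assertion: if $C$ is contact then $\zq(C) = 0$, so $\zq(C_\zm) = (\g^0_\zm)^c$ coincides with the tangent distribution of the $G^0_\zm$-orbit foliation, meaning the reduction exhausts the entire characteristic foliation and $C(\zm)$ is contact; symmetrically, $\zw$ symplectic forces $\zq(\zw) = 0$ and $\zw(\zm)$ is symplectic. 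The main obstacle I anticipate is the careful bookkeeping in identifying the characteristic distributions via the computation (\ref{ort}) and in transferring properness from $M_\zm$ to $P_{[\zm]}$; the case dichotomy $\zm = 0$ versus $\zm \neq 0$ is handled uniformly once $P_{[\zm]} = J^{-1}([\zm]^\ti)$ is set up correctly.
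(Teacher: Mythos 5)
Your proposal is correct and follows essentially the same route as the paper, which obtains Theorem \ref{MAIN} precisely by combining Theorem \ref{m1} (identification of $\zq(\zw_{[\zm]})=\hat\g^0_\zm+\zq(\zw)$ and $\zq(C_\zm)=(\g^0_\zm)^c+\zq(C)$, constant rank granted the regularity of the intersection) with the reduction Theorem \ref{main1} applied to $\zq_P=\hat\g^0_\zm$, $\zq_M=(\g^0_\zm)^c$. Your added details -- the dimension-count argument for constant rank of the sum and the lifting of freeness and properness from $M_\zm$ to $P_{[\zm]}$ via Borel's criterion -- correctly fill in steps the paper leaves implicit, and your rank argument for the final contact/symplectic assertion matches the paper's reasoning.
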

\no An obvious consequence of the above theorem in the case when $(M,C)$ is a contact manifold, i.e., a contact Marsden-Weinstein-Meyer theorem, is Theorem \ref{Cred}. A particular case of the latter is Theorem 1 in \cite{Willet:2002}. Note that if the action of $G_{[\zm]}$ is free and proper, then the action of $G_{[\zm]}^0$ is free and proper if and only if $G^0_{[\zm]}$ is closed in $G_{[\zm]}$ ($G_{[\zm]}$ is always a closed subgroup of $G$), thus closed in $G$. A characterisation of closeness of $G_{[\zm]}^0$ for compact $G$ is given in the following proposition.
\begin{proposition}
If $G$ is compact and connected, then $G_{[\zm]}=G_\zm$, and $G_\zm^0$ is a closed (thus compact) subgroup in $G_\zm$ if and only if $a\zm$ induces a character on $G_\zm$ for some $a\ne 0$, i.e., the linear map $a\zm:\g_\zm\to\R$ gives rise to a group homomorphism
$$\zf_{a\zm}:G_\zm\to S^1=\R/\Z,\quad \zf_{a\zm}(\exp(v))=a\la\zm,v\ran\mod\Z$$
for all $v\in\g_\zm$. In other words, $a\zm\in\g^*$ takes integer values on the kernel of $\exp:\g_\zm\to G_\zm$.
\end{proposition}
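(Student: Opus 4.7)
The plan is to reduce to the identity component $H := (G_\zm)_0$ and exploit the correspondence between codimension-one closed connected normal subgroups of a compact connected Lie group and its nontrivial characters to $S^1$.

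\textbf{Reduction.} Since $G$ is compact and connected, $G_\zm$ is closed in $G$, hence compact. Then $H = (G_\zm)_0$ is a compact connected Lie group with Lie algebra $\g_\zm$, and since $G_\zm^0 \subset H$ the subgroup $G_\zm^0$ is closed in $G_\zm$ iff it is closed in $H$. The restriction $\ell := \zm|_{\g_\zm} : \g_\zm \to \R$ is a Lie algebra homomorphism, because (as shown in the discussion preceding Theorem \ref{m1}) $[\g_\zm, \g_\zm] \subset \g_\zm^0 = \ker \ell$. If $\ell \equiv 0$ both conditions hold trivially: $G_\zm^0 = H$ is closed and any $a\zm$ yields the trivial character. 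I therefore assume $\ell \ne 0$, so that $\g_\zm^0$ has codimension one in $\g_\zm$ and $G_\zm^0$ is a connected normal subgroup of $H$.

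\textbf{Both directions via a character.} If $G_\zm^0$ is closed in $H$, the quotient $H/G_\zm^0$ is a compact, connected, one-dimensional Lie group, hence isomorphic to $S^1 = \R/\Z$; let $\zL : H \to S^1$ be the projection. Its derivative $d\zL_e : \g_\zm \to \R$ has kernel $\g_\zm^0$, forcing $d\zL_e = a\ell$ for some $a \ne 0$. The identity $\zL(\exp v) = a\la \zm, v\ran \bmod \Z$ evaluated at $v \in \ker\exp$ then gives $a\la\zm,v\ran \in \Z$, which is the stated condition. For the converse, assume $a\zm(\ker\exp) \subset \Z$. The Lie algebra map $a\ell$ integrates uniquely to a Lie group homomorphism $\tilde\zL : \tilde H \to \R$ on the universal cover $\tilde H$, and $\tilde\zL$ descends through $\R \to S^1$ to $\zL : H \to S^1$ iff $\tilde\zL(\pi_1(H)) \subset \Z$. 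Since $H$ is compact, $\tilde H = \R^k \times \tilde G'$ with $\tilde G'$ compact semisimple simply connected, and the exponential $\exp_{\tilde H}$ is surjective (identity on $\R^k$, surjective on compact $\tilde G'$). Therefore every $\zg \in \pi_1(H) = \ker(\tilde H \to H)$ has the form $\zg = \exp_{\tilde H} v$ for some $v \in \g_\zm$, which forces $\exp_H v = e$, i.e.\ $v \in \ker\exp$; the hypothesis gives $\tilde\zL(\zg) = a\la\zm,v\ran \in \Z$. The resulting $\zL$ has closed kernel with Lie algebra $\g_\zm^0$, so its identity component is $G_\zm^0$, which is closed in $H$ as the identity component of a closed Lie subgroup. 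Extension of $\zL$ from $H$ to a possibly disconnected $G_\zm$ is guaranteed by the $G_\zm$-invariance of $\zm|_{\g_\zm}$, itself an immediate consequence of $\Ad_g^*\zm = \zm$ for $g \in G_\zm$.

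\textbf{Main obstacle.} The delicate step is the sufficiency half of the integrality criterion, since $\ker\exp_H$ is not a subgroup of $\g_\zm$ when $H$ is nonabelian, so one cannot simply define $\zL(\exp v) := a\la\zm,v\ran \bmod \Z$ by quotienting out a lattice. Passing to the universal cover $\tilde H$ removes this difficulty, because every Lie-algebra character integrates on a simply connected group, so that the problem reduces to the discrete condition $\tilde\zL(\pi_1(H)) \subset \Z$; the surjectivity of $\exp_{\tilde H}$ — the only place where compactness of $H$ is genuinely exploited — then translates this condition exactly into the stated integrality of $a\zm$ on $\ker\exp_H$.
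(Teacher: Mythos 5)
Your analysis on the identity component $H=(G_\zm)_0$ is correct and in one respect more complete than the paper's own proof: the paper spells out only the direction in which closedness of $G_\zm^0$ yields $G_\zm/G_\zm^0\simeq S^1$ and reads off $D\zf(v)=a\la\zm,v\ran$, leaving the converse and the equivalence with integrality of $a\zm$ on $\ker\exp$ implicit, whereas you prove them: integrating $a\zm|_{\g_\zm}$ on the universal cover, using the compact-case splitting $\wt H\simeq\R^k\ti K$ with $K$ compact simply connected to get surjectivity of $\exp_{\wt H}$, and converting the descent condition $\wt\zL(\pi_1(H))\subset\Z$ exactly into the stated integrality. That part is sound, including the identification of the identity component of $\ker\zL$ with $G_\zm^0$.

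The genuine gap is the last sentence. The claim that a conjugation-invariant character of the open normal subgroup $H$ extends to a possibly disconnected $G_\zm$ is not a valid principle: the obstruction is a class in $H^2(G_\zm/H,S^1)$, which need not vanish. For example, let $E\subset U(2)$ be generated by the scalar circle and the Pauli matrices $\zs_x,\zs_z$; its identity component is the central $S^1$, whose identity character is conjugation-invariant, yet an extension to $E$ would produce (via its kernel) a splitting of the nontrivial central extension of $\Z_2\ti\Z_2$ by $S^1$, which does not exist. So $\Ad^*_{G_\zm}$-invariance of $\zm|_{\g_\zm}$ alone does not close the argument, and the forward implication as literally stated (a character of $G_\zm$, not merely of $H$) suffers from the same defect. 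The missing ingredient is precisely the fact the paper invokes at the outset: for $G$ compact and connected the isotropy group $G_\zm$ is connected (Borel), so $H=G_\zm$ and no extension step is needed. With that fact quoted or proved, your argument is complete; without it, the final sentence does not stand.
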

\begin{proof}
Writing $\Ad^*_g(\zm)=s_g\cdot\zm$ for $g\in G_{[\zm]}$, we see that $G_{[\zm]}\ni g\mapsto s_g\in\Rt$ is a group homomorphism. But if $G$ is compact, the corresponding subgroup in $\Rt$ is compact, thus contained in $\{\pm 1\}$.
Hence, $s_g=1$ for all $g\in G_{[\zm]}$, and $G_{[\zm]}=G_\zm$ if $G$ is additionally connected.

\mn Since for $G$ being compact and connected the subgroup $G_\zm$ is always connected (see e.g, \cite{Borel:1954,Filippini:1995}), the normal subgroup $G_\zm^0$ of $G_\zm$ is closed in $G_\zm$ if and only if $G_\zm/G_\zm^0\simeq S^1$. Let $\zf:G_\zm\to S^1=G_\zm/G_\zm^0$ be the canonical group homomorphism. Since $\g_\zm^0=\ker(\zm)$, the corresponding morphism $D\zf:\g_\zm\to\g_\zm/\g_\zm^0\simeq\R$ has the form $D\zf(v)=a\la\zm,v\ran$ for some $a\in\R$, $a\ne 0$, so $\zf=\zf_{a\zm}$.
\end{proof}
\begin{example} A simple numerical example for the above contact Marsden-Weinstein-Meyer reduction is the following. Consider $M=\sT^*\R\ti\R$ with the contact structure  determined by the Darboux contact form $\zh=\xd z-p\xd q$. In other words, this is the canonical contact structure on the first jet bundle $\sJ^1(\R;\R)$. Hence, $\zt:P=M\ti\Rt\to M$ is the trivial $\Rt$-bundle and
$$\zw=\xd s\we(\xd z-p\,\xd q)+s\cdot\xd q\we\xd p\,.$$
Moreover, $\n=s\,\pa_s$ and $\zvy=s\cdot\zh=s\cdot(\xd z-p\,\xd q)$.
Consider the 1-dimensional Lie algebra $\g=\R$ and its realization in vector fields on $M$
given by $\zx\mapsto \zx^c=\zx\cdot(\pa_q-p\,\pa_p-z\,\pa_z)$. This is the infinitesimal part of the action of the group $G=\R$ by $t.(z,p,q)=(e^{-t}z,e^{-t}p,q+t)$ on $M$ by contactomorphisms. The vector fields $\zx^c$ are contact vector fields, $\Ll_{\zx^c}\,\zh=0$, and the corresponding 1-homogeneous Hamiltonians read
$H_\zx=\zx\cdot s\cdot(p+z)$. Indeed, the Hamiltonian vector field $\hzx=X_{H_\zx}$ on $P$ reads
$$\hzx=\zx\cdot(\pa_q-p\,\pa_p-z\,\pa_z+s\,\pa_s)\,,$$
so its  projection onto $M$ is exactly $\zx^c$. The moment map $J:P\to\R^*$ satisfies
$$\zx\cdot J(s,z,q,p)=H_\zx(s,z,q,p)=\zx\cdot s\cdot(p+z),$$
so $J(s,z,q,p)=s\cdot(p+z)$ with every $\zm\in\R^*$ as a regular value.

\mn Let us consider first the case $\zm=0$, so that $P_0=J^{-1}(0)$ is the 3-dimensional and $\Rt$-invariant submanifold in $P$, defined by the equation $p+z=0$. We can take $(s,z,q)$ as global coordinates parameterizing $P_0$, $(s,z,q)\mapsto(s,z,q,-z)$, in which
\be\label{red2}\zw\,\big|_{P_0}(s,z,q)=\xd s\we(\xd z+z\,\xd q)-s\,\xd q\we\xd z\,.\ee
The submanifold $M_0=\zt(P_0)$ in $M$ is of dimension 2 and it is defined by the same equation $p+z=0$, so that $M_0$ is a transversal submanifold and $(z,q)$ serve as coordinates in $M_0$.
The rank of $\zw\,\big|_{P_0}$ is clearly 2, so the rank of the distribution $D=\ker(\zw\,\big|_{P_0})$ is 1. The distribution $D$ is thus generated by the vector field $X=\pa_q-z\,\pa_z+s\,\pa_s$, as $X$ is non-vanishing and $i_X(\zw\,\big|_{P_0})=0$. Note that $X$ is the restriction of the Hamiltonian vector field $\hat{1}=\pa_q-p\,\pa_p-z\,\pa_z+s\,\pa_s$ to $P_0$. This vector field is nowhere vanishing, so it spans $D$, and the corresponding $\R$-action on $P_0$ is $$t.(z,q,s)=(e^{-t}z,q+t,e^ts)\,.$$
Being 1-dimensional submanifolds, the trajectories of this action are described by the system of equations $s\,e^{-q}=s'$ and $z\,e^q=z'$, where  $(z',s')\in\R\ti\Rt$, so that $(z',s')$ are coordinates in $P(0)=P_0/\R$. The reduced principal $\Rt$-action  on $P(0)$ is clearly $s.(z',s')=(z',s\,s')$, and the reduced symplectic form reads (cf. (\ref{red2}))
$$\zw(0)=\xd (s'\,e^{q})\we\left(\xd(z'\,e^{-q})+z'\,e^{-q}\,\xd q\right)-s'\,e^{q}\,\xd q\we \xd(z'\,e^{-q})=\xd s'\we\xd z'.$$
The trivial $\Rt$-bundle $P(0)=\R\ti\Rt$ with the symplectic form $\zw(0)$ is the symplectic cover of  the unique (trivial) contact structure $C=\R\ti\{ 0\}\subset\sT\R$ on $\R$, which is associated with the contact form $\zh'=\xd z'$. Note that the analog of the above procedure can be done for $\sT^*\R^{n+1}\ti\R$ equipped with the  canonical contact form. If we view $\sT^*\R^{n+1}\ti\R$ as $\sT^*\R\ti\sT^*\R^n\ti\R$  with coordinates $(z,q,p,q^i,p_j)$, then the formally the same contact vector field $\hzx$ can be used as the infinitesimal part of the $\R$ action involving only coordinates $(z,q,p,s)$. The whole procedure will give the canonical contact structure on $\sT^*\R^n\ti\R$ as the reduced structure.

\mn Our procedure is intrinsic, so it does not depend on the particular choice of the (local) contact form  generating the contact structure. For instance, if we  start in  our example from  $$\tilde\zh=f\cdot\zh=f\cdot(\xd z-p\,\xd q-p_i\xd q^i)\,,$$
where $f$ is a nowhere vanishing function on $M=\sT^*\R^{n+1}\ti\R$, then we have another trivialization of $P$ with $\tilde s=f\cdot s$ and the rest of coordinates unchanged. In these coordinates the procedure is the same, and we end up finally with another trivialization of $P(0)$ with coordinates $(z',q^i,p_i,\tilde s')$, where  $\tilde s'=f(z,0,-z,q^i,p_j)\cdot s'$. This corresponds to the new contact form
$$\tilde\zh'=f(z,0,-z,q^i,p_j)\cdot\zh(z,q,p,q^i,p_j),$$
which is clearly equivalent to $\zh$, so that it induces the same reduced contact structure.

\mn If now $\zm\in\R^*$, $\zm\ne 0$, then $P_\zm$ is the submanifold in $P=M\ti\Rt$ defined by the equation $s=\zm/(p+z)$, $p+z\ne 0$, and $P'=P_{[\zm]}=J^{-1}((\R^*)^\ti)$ is an open-dense submanifold in $P$ of those $(z,p,q,s)\in\sT^*\R\ti\R\ti\Rt$ such that $(p+z)\ne 0$. The action of $\R$ on $P$ preserves $P'$, so its action on $M$ preserves the open-dense submanifold $M'=\zt(P')$
defined by $(p+z)\ne 0$. The group $G=\R$ is commutative, so $\g_{[\zm]}=\g_\zm=\g$ and $\g_\zm^0=\{ 0\}$. Hence, $M'$ and $P'$ with restricted contact (resp., symplectic) structures are the reduced structures in this case.
\end{example}
\no In the next example, we consider first jet prolongations of group actions on line bundles, which can serve as canonical examples of group actions by contactomorphisms on nontrivial contact manifolds.
\begin{example} In this example, we show how to produce a contact analogue of the Marsden-Weinstein-Meyer reduction for the cotangent lifts of group actions on a manifold (see e.g, \cite{Abraham:1978,Ortega:2004,Reyes:2005}).

Let $\zt_0:L\to Q$ be a line bundle, $\zp_0:L^*\to Q$ be its dual, and $G$ be a Lie group. Note first that, similarly to the case of cotangent bundles, any $G$-action $\hat\zr:G\ti L\to L$ on $L$ by vector bundle automorphisms can be canonically lifted to a contact $G$-action $\zr:G\ti \sJ^1L^*\to\sJ^1L^*$ on the first jet bundle $\sJ^1L^*$ of sections $\zs:Q\to L^*$ of $L^*$, equipped with its canonical contact structure. Indeed, if $\hat\zr^*:G\ti L^*\to L^*$ is the dual action on $L^*$ and the vector bundle isomorphisms $\hat\zr_g$ reads
$$\xymatrix@C+50pt{
L \ar[r]^{{\hat\zr}_g}\ar[d]_{\zt_0} & L\ar[d]^{\zt_0} \\
Q \ar[r]^{{\hat\zr}^0_g} & Q\,,}
$$
then $\zr$ is uniquely determined by
$$\zr_g(\sj^1_q(\zs))=\sj^1_{\hat\zr^0_g(q)}(\hat\zr^*_g\circ\zs\circ {\hat\zr}^0_{g^{-1}} ).$$
Recall that infinitesimal automorphisms of a vector bundle consist of linear vector fields on the bundle and the symplectic cover of the contact manifold $\sJ^1L^*$ is $P=\sT^*\Lt$, with the phase lift of the $\Rt$-action on $\Lt$ and the canonical symplectic structure of the cotangent bundle \cite{Grabowska:2022,Grabowski:2013}. The Hamiltonian action of $G$ on $\sT^*\Lt$, lifted from the contact action on $\sJ^1 L^*$, is the standard cotangent lift of the $G$ action on $L$, thus on $\Lt$.

If $\zx\in\g$ is an element in the Lie algebra $\g$ of $G$ and $\hzx_0$ is the corresponding linear vector field on $L$ (thus a 1-homogeneous vector field on $\Lt$), associated with the $G$-action $\hat\zr$ on $L$ by vector bundle automorphisms, then the contact Hamiltonian $H_\zx$ on $\sT^*\Lt$ is $H_\zx=\zi_{\hzx_0}$, where $\zi_{\hzx_0}$ is the linear function on $\sT^*L$, thus on $\sT^*\Lt$, associated with the vector field $\hzx_0$ on $L$. Since $\hzx_0$ is a linear vector field on $L$, it is easy to see that the function $\zi_{\hzx_0}$ on $\sT^*\Lt$ is additionally 1-homogeneous with respect to the lifted principal $\Rt$-bundle structure, thus a contact Hamiltonian. The corresponding Hamiltonian vector field $\hzx=X_{H_\zx}$ is therefore the well-known cotangent lift of the vector field $\hzx_0$ on $L$, and the contact moment map reads
$$J:\sT^*\Lt\to\g^*,\quad \La J(\za_v),\zx\Ra=\zi_{\hzx_0}(\za_v)=\La\za_v,\hzx_0(v)\Ra,$$
where $\za_v\in\sT^*_v\Lt$, $v\in\Lt$.

\mn For $0=\zm\in\g^*$ we can proceed now with the traditional Marsden-Weinstein-Meyer reduction, which is known to yield the reduced symplectic manifold $J^{-1}(0)/G=\sT^*(\Lt/G)$.
Since clearly $\Lt/G=(L/G)^\ti$, the corresponding reduced contact manifold is $\sJ^1(L/G)^*=\sJ^1(L^*/G)$ with its canonical contact structure. The case $\zm\ne 0$ is more complicated even in the standard situation and leads to the so-called \emph{fibration cotangent bundle reduction} \cite[Theorem 6.6.8]{Ortega:2004}. The studies on corresponding contact analogs of the latter we postpone to a separate paper.

\end{example}
\section{Conclusions and outlook}
We presented in this paper a fully intrinsic geometric approach to reductions of contact manifolds, which serves for general (also nontrivial) contact structures. This approach is closely related to symplectic reductions, due to a one-to-one correspondence between contact manifolds and symplectic manifolds of a special type (symplectic $\Rt$-bundles). Actually, all tools can be easily adapted to the precontact/presymplectic structures introduced in the paper. We obtained precontact-to-contact reductions, as well as precontact versions of the celebrated Marsden-Weinstein-Meyer theorem. The presentation was concentrated on introducing geometric concepts and tools. The next step will be devoted to a closer analysis of important examples coming from physics and other potential applications.

The precontact/presymplectic framework can be crucial for these aims, since presymplectic structures play a fundamental r\^ole in various problems of physics origins, especially related to constrained systems (e.g, \cite{Cantrijn:1999,Gotay:1978,Reyes:2005,Rothe:2010}) or time-dependent mechanics \cite{Guzman:2010}. For instance, the presymplectic approach is a crucial step in the relativistic theory, e.g, the equations of motion of a charged spinning particle moving in a space-time (with or without torsion) in the presence of an electromagnetic field. The power of presymplectic formulation resides in the fact that it is manifestly covariant and does not require non-relativistic concepts, like absolute time, to describe dynamics. Also, the space of solutions of first-order Hamiltonian field theories is a presymplectic
manifold, which was exploited in a series of papers \cite{Ciaglia:2020,Ciaglia:2020a,Ciaglia:2022}
describing covariant variational evolution and Poisson/Jacobi brackets on the space of functions on the solutions to a variational problem. We plan to carry out studies in these directions and publish them in forthcoming papers.

\section*{Acknowledgments}
The authors thank Witold Respondek for his useful comments on the classification of hyperplane fields. We also thank Javier de Lucas for pointing out the reference \cite{deLucas:2025}.

\small{\vskip.5cm}

\noindent Katarzyna GRABOWSKA\\
Faculty of Physics \\
University of Warsaw\\
Pasteura 5,
02-093 Warszawa, Poland
\\Email: konieczn@fuw.edu.pl\\
https://orcid.org/0000-0003-2805-1849\\

\noindent Janusz GRABOWSKI\\ Institute of
Mathematics\\  Polish Academy of Sciences\\ \'Sniadeckich 8, 00-656 Warszawa, Poland
\\Email: jagrab@impan.pl \\
https://orcid.org/0000-0001-8715-2370\\

\end{document}